\newtheorem{theorem}{Theorem}
\numberwithin{theorem}{section}
\newtheorem{proposition}[theorem]{Proposition}
\newtheorem{lemma}[theorem]{Lemma}
\newtheorem{corollary}[theorem]{Corollary}
\theoremstyle{remark}
\theoremstyle{definition}
\newtheorem{remark}[theorem]{Remark}
\newtheorem{definition}[theorem]{Definition}
\numberwithin{equation}{section}
\newcommand\set[1]{\left\{\,#1\,\right\}}		% set
\newcommand\abs[1]{\left|#1\right|}				% modulus
\DeclareMathOperator{\dist}{dist}				% distance
\DeclareMathOperator{\id}{id}					% identity
\DeclareMathOperator{\tr}{tr}					% trace
\DeclareMathOperator{\curl}{curl}				% curl
\DeclareMathOperator{\divv}{div}				% div
\def\N{\mathbb{N}}
\def\R{\mathbb{R}}
\newcommand{\cA}{{\mathcal A}}
\newcommand{\cC}{{\mathcal C}}
\newcommand{\cK}{{\mathcal K}}
\newcommand{\cS}{{\mathcal S}}
\newcommand{\cU}{{\mathcal U}}
\newcommand{\lamax}{\lambda_{\text{max}}}
\newcommand{\lamin}{\lambda_{\text{min}}}						
\begin{document}

\title{A new approach to the Rayleigh-Taylor instability\thanks{This project has received funding from the European Research Council (ERC) under the European Union's Horizon 2020 research and innovation programme (grant agreement No. 724298-DIFFINCL).}}
\author{Bj\"orn Gebhard \quad J\'ozsef J. Kolumb\'an \quad L\'aszl\'o Sz\'ekelyhidi Jr.}
\date{}
\maketitle

\begin{abstract}
In this article we consider the inhomogeneous incompressible Euler equations describing two fluids with different constant densities under the influence of gravity as a differential inclusion. By considering the relaxation of the constitutive laws we formulate a general criterion for the existence of infinitely many weak solutions which reflect the turbulent mixing of the two fluids. Our criterion can be verified in the case that initially the fluids are at rest and separated by a flat interface with the heavier one being above the lighter one - the classical configuration giving rise to the Rayleigh-Taylor instability. We construct specific examples when the Atwood number is in the ultra high range, for which the zone in which the mixing occurs grows quadratically in time.  
\end{abstract}
%
%{\bf MSC 2010:} Primary: 37J25; Secondary: 37J45, 37N10, 76B47
%
%{\bf Key words:} vortex dynamics; periodic solutions; stability; Floquet multipliers; bifurcation; Poincar\'e section

\section{Introduction}

We study the mixing of two different density perfect incompressible fluids subject to gravity, when the heavier fluid is on top. In this setting an instability known as the Rayleigh-Taylor instability forms on the interface between the fluids which eventually evolves into turbulent mixing. For an overview of the investigation of this phenomenon originating in the work of Rayleigh \cite{Rayleigh} in 1883 we refer to the articles \cite{Abarzhi, Bof, Bof2, Celani_Mazzino}.

The mathematical model (see for example Section 6.4 of \cite{Marchioro_Pulvirenti}) is given by the inhomogeneous incompressible Euler equations
\begin{align}\label{eq:main}
\begin{split}
\partial_t(\rho v)+\text{div }(\rho v\otimes v) + \nabla p &= - \rho g e_n,\\
\text{div } v&=0,\\
\partial_t \rho + \text{div }(\rho v)&=0,
\end{split}
\end{align}
which we consider on a bounded domain $\Omega\subset\R^n$, $n\geq 2$ and a time interval $[0,T)$. Here $\rho:\Omega\times[0,T)\to\mathbb{R}$ denotes the fluid density,
$v:\Omega\times[0,T)\to\mathbb{R}^n$ is the velocity field, respectively
$p:\Omega\times[0,T)\to\mathbb{R}$ is the pressure, $g>0$ is the gravitational constant and $e_n$ is the n-th Euclidean coordinate vector. 
Compared to the homogenous density case, $\rho\equiv 1$, the solvability of the Cauchy problem of \eqref{eq:main} for a general non-constant initial density distribution is more delicate even in the planar case, see Section 6.4 of \cite{Marchioro_Pulvirenti}. Results concerning the local well-posedness have only been obtained under sufficiently strong regularity assumptions on the initial density, see \cite{Danchin,daVega_Valli_I,daVega_Valli_II,Valli_Zaj} and references therein. However, since we are interested in the mixing of two different fluids, our initial data does not fall into the classes considered in  
\cite{Danchin,daVega_Valli_I,daVega_Valli_II,Valli_Zaj}.

More precisely, we consider \eqref{eq:main} together with initial data $v_0:\Omega\rightarrow\R^n$, $\rho_0:\Omega\rightarrow\R$ satisfying
\begin{align}\label{eq:id_comp}
\divv v_0=0\text{ and } \rho_0\in\set{\rho_-,\rho_+}\text{ a.e.}
\end{align}
with two fixed values $\rho_+>\rho_->0$. In fact our main focus lies on the flat unstable initial configuration
\begin{align}\label{eq:initialdata}
v_0\equiv 0\text{ and }\rho_0(x)= \left\{
\begin{array}{ll}
 \rho_+\text{ when }x_n>0,\\
 \rho_-\text{ when }x_n\leq 0,
\end{array} 
\right.
\end{align}
giving rise to the Rayleigh-Taylor instability. The linear stability analysis of the flat interface has already been investigated in the article of Rayleigh \cite{Rayleigh} and for example can also be found in \cite{Bardos}. Regarding the nonlinear analysis, to the best of our knwoledge there has been so far no existence result of mixing solutions for the case of the discontinuous initial data \eqref{eq:initialdata}. 

In the spirit of the results by De Lellis and the 3rd author \cite{DeL-Sz-Annals,DeL-Sz-Adm}, for the homogeneous incompressible Euler equations, we develop a convex integration strategy for the inhomogeneous Euler system to prove the existence of weak solutions for the Cauchy problem \eqref{eq:main}, \eqref{eq:initialdata}. Similarly to other unstable interface problems that have recently been attacked by means of convex integration, like the Kelvin-Helmholtz instability in \cite{Sz-KH} or the Muskat problem for the incompressible porous media equation in \cite{Cordoba,Sz-Muskat}, we can interpret the ``wild'' behaviour of the weak solutions obtained this way as turbulent mixing. More precisely, we prove the existence of solutions with the following properties:

For $\rho_+>\rho_->0$ define the Atwood number 
$
\cA=\frac{\rho_+-\rho_-}{\rho_++\rho_-}
$ and the quadratic functions $c_\pm:\R\rightarrow\R$,
\[
c_+(t)=\frac{\rho_++\rho_-}{2\sqrt{\rho_-}(\sqrt{\rho_+}+\sqrt{\rho_-})}\cA gt^2,\quad c_-(t)=\frac{\rho_++\rho_-}{2\sqrt{\rho_+}(\sqrt{\rho_+}+\sqrt{\rho_-})}\cA gt^2.
\]
Let $T>0$ and $\Omega=(0,1)\times(-c_-(T),c_+(T))\subset\R^2$.

\begin{theorem}\label{thm:teaser_theorem}
Let $\frac{\rho_+}{\rho_-}\geq \left(\frac{4+2\sqrt{10}}{3}\right)^2$.
The initial value problem \eqref{eq:main}, \eqref{eq:initialdata} has infinitely many weak admissible solutions $(\rho,v)\in L^\infty(\Omega\times(0,T);\R\times\R^2)$ with $\rho\in\{\rho_-,\rho_+\}$ a.e. and such that
\begin{enumerate}[(i)]
\item $\rho(x,t)=\rho_+$, $v=0$ for $x_2\geq c_+(t)$,
\item $\rho(x,t)=\rho_-$, $v=0$ for $x_2\leq -c_-(t)$,
\item for any open ball $B$ contained in $\set{(x,t)\in\Omega\times(0,T):x_2\in(-c_-(t),c_+(t))}$ there holds
\[
\int_B\rho_+-\rho(x,t)\:dx\:dt\cdot\int_B\rho(x,t)-\rho_-\:dx\:dt>0.
\]
\end{enumerate}
\end{theorem}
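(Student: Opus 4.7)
The plan is to prove the theorem via convex integration applied to the inhomogeneous Euler system \eqref{eq:main}, in the spirit of \cite{DeL-Sz-Annals,DeL-Sz-Adm} and the analogous unstable interface problems \cite{Sz-KH,Cordoba,Sz-Muskat}. The argument splits naturally into an abstract reduction and an explicit construction, with the latter being the main obstacle.

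For the abstract reduction I would reformulate \eqref{eq:main} as a differential inclusion by introducing the momentum $m=\rho v$, a symmetric traceless ``Reynolds stress'' $\sigma$ and a modified pressure $q$, so that the system becomes \emph{linear} in $(\rho,m,\sigma,q)$ while the original nonlinearity $\rho\, v\otimes v$ is recovered exactly on the pointwise constraint
\[
K \;=\; \bigl\{\,(\rho,m,\sigma):\rho\in\{\rho_-,\rho_+\},\;\sigma=\tfrac{m\otimes m}{\rho}-\tfrac{|m|^2}{n\rho}\id\,\bigr\}.
\]
Computing the convex hull $K^{co}$ and defining a \emph{subsolution} as a (piecewise $C^1$) solution of the linearised system which (a) matches \eqref{eq:initialdata} outside the open mixing zone $\Omega_{\mathrm{mix}}:=\{(x,t):-c_-(t)<x_2<c_+(t)\}$ and (b) takes values strictly in the interior of $K^{co}$ on $\Omega_{\mathrm{mix}}$, the standard Baire category machinery with localised plane-wave perturbations upgrades any such subsolution to infinitely many exact weak solutions of \eqref{eq:main}. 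Properties (i) and (ii) then follow directly from (a); property (iii) follows from the fact that along the iteration the density $\rho$ is forced to oscillate between $\rho_\pm$ at almost every point of $\Omega_{\mathrm{mix}}$. Admissibility is maintained if the perturbations are chosen compatibly with a (local) energy inequality at each step of the iteration.

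The main obstacle is to exhibit such a subsolution for the flat datum \eqref{eq:initialdata}. Scaling considerations are decisive: the only parameters are $g$ and $\rho_\pm$, so the mixing zone must grow with width $\mathcal{O}(gt^2)$ and one is led to look for a self-similar subsolution depending on $\xi=x_2/(gt^2)$, with the averaged density $\bar\rho$ affine in $\xi$ across $\Omega_{\mathrm{mix}}$ and the averaged momentum $\bar m$ polynomial of low degree in $\xi$, together with a suitable $(\tilde\sigma,\tilde q)$ determined by the linearised equations. The specific coefficients of the quadratic growth rates $c_\pm(t)$ are then fixed by requiring matching with the pure phases at the top and bottom interfaces together with conservation of mass and momentum across $\Omega_{\mathrm{mix}}$. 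The remaining, and most delicate, task is to verify the strict inclusion $(\bar\rho,\bar m,\tilde\sigma)(x,t)\in \mathrm{int}\,K^{co}$ at every interior point of the mixing zone. This reduces to a family of explicit pointwise algebraic inequalities in $(\rho_+,\rho_-)$; these inequalities are tightest near $\xi=\pm 1$, and a direct computation shows that they are simultaneously solvable precisely when the ultra-high Atwood condition $\rho_+/\rho_-\geq ((4+2\sqrt{10})/3)^2$ holds, matching the threshold in the theorem. Once such a subsolution has been produced, the abstract step completes the proof.
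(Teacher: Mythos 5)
Your overall strategy (reformulate as a differential inclusion, compute the convex hull, look for a self-similar subsolution with quadratically growing mixing zone, then apply Baire category) is the correct one, and indeed matches the paper at the level of outline. However, there are three concrete gaps in your proposed execution, each of which would cause the argument to fail as written.

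\emph{First}, your set of constraints
\[
K \;=\; \bigl\{\,(\rho,m,\sigma):\rho\in\{\rho_-,\rho_+\},\;\sigma=\tfrac{m\otimes m}{\rho}-\tfrac{|m|^2}{n\rho}\id\,\bigr\}
\]
cannot serve as the constraint set for convex integration for two reasons. It omits the velocity $v$ as an independent variable: the incompressibility constraint $\divv v = 0$ is \emph{not} linear in $m=\rho v$, so one must carry both $v$ and $u=\rho v$ as separate fields in the linearised system and impose $u=\rho v$ only on the constraint set (this is exactly how the paper sets things up in \eqref{eq:linear_system_retransformed}, \eqref{eq:definition_of_nonlinear_constraints}). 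It also contains no prescription of the kinetic energy; without it, the constraint set is unbounded in the momentum direction, its convex hull is unbounded, and the Baire category machinery (which needs a bounded set of subsolutions in $L^2$) does not get off the ground. The paper handles this by prescribing a function $e(x,t)$ and building it into the pointwise constraint, together with a change to accelerated coordinates that removes gravity and makes the prescribed energy compatible with the transport structure.

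\emph{Second}, your ansatz that the coarse-grained density $\bar\rho$ is \emph{affine} in $\xi=x_2/(gt^2)$ is wrong. The subsolution density is obtained as the \emph{entropy solution} of a scalar conservation law $\partial_t y + \partial_{x_2} G(y)=0$ (after rescaling time), where the flux $G$ is uniformly strictly convex; the resulting rarefaction-fan profile is $\bar\rho = (G')^{-1}(2x_2/(gt^2))$, which for the paper's choice of flux involves $1/\sqrt{1+2x_2/(gt^2)}$ and is genuinely non-affine. An affine ansatz would in general not solve the linearised continuity equation and would not land in the interior of the hull.

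\emph{Third}, you attribute the ultra-high Atwood threshold $\rho_+/\rho_-\geq ((4+2\sqrt{10})/3)^2$ to the pointwise inclusion $\bar z(x,t)\in\mathrm{int}\,K^{co}$ becoming critical near the edges of the mixing zone. This is not where the threshold comes from. Subsolutions with the stated growth rates $c_\pm(t)$ lying strictly in the interior of the hull exist for \emph{every} ratio $\rho_+/\rho_->1$; the threshold enters only through the \emph{admissibility} requirement, i.e.\ the strict decrease of the total energy $\int_\Omega E_{sub}(x,t)\,dx$. In the paper this is quantified by the sign of an integral $\bar I$ (after a perturbative expansion around the limiting choice $\xi_2\equiv 1$, $\eta_2\equiv -1$), and $\bar I>0$ turns out to be achievable precisely when $\sqrt{\rho_+/\rho_-}>(4+2\sqrt{10})/3$. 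Without isolating the energy inequality as the binding constraint, one cannot explain why the threshold appears at all, nor produce admissible solutions.

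So while the architecture of your plan is right, the constraint set, the profile ansatz, and the source of the Atwood threshold all need to be corrected before the proof can be carried through.
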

For the precise definition of weak admissible solutions we refer to Definitions \ref{def:weaksols}, \ref{def:admissibility_of_weak_solutions}.

We would like to point out that the infinitely many weak solutions differ only in their turbulent fine structure, while they all have a continuous coarse grained density profile $\bar{\rho}$ in common. The profile $\bar{\rho}(x,t)=\bar{\rho}(x_2,t)$ can be seen as an $x_1$-average of the solutions and is found as the entropy solution of a conservation law
\[
\partial_t\bar{\rho}+gt\partial_{x_2}G(\bar{\rho})=0,
\]
which up to the factor $t$ shows similarities to the conservation law appearing in Otto's relaxation for the incompressible porous media equation \cite{Otto}.
 Further details and the explicit profile $\bar{\rho}$ can be found in Section \ref{sec:subsolutions}.

The condition that the
density ratio $\rho_+/\rho_-$ is larger than $\left(\frac{4+2\sqrt{10}}{3}\right)^2 \approx 11.845 $,
 implies that the
 Atwood number is in the so-called (e.g. \cite{s_ultrahigh}) ``ultra high'' range $(0.845,1)$.
This regime has been of great interest to the physics and numerics communities recently, as it has many applications in fields such as inertial confinement fusion, astrophysics or meteorology (see e.g. \cite{s_ultrahigh,exp_dens,s_all_r}).
For instance the Atwood number for mixing hydrogen and air is $0.85$ (see \cite{s_all_r}).
  
   Higher Atwood number implies higher turbulence, and compared to the low Atwood regime, one can not use the Boussinesq approximation (see e.g. \cite{Bof2,Celani_Mazzino,s_no_b}) to accurately model the phenomena. Compared to the homogeneous density case, where the turbulence is only due to mixing in momentum, here it is due to mixing both in momentum and in density, this ``double mixing'' is reflected also in our relaxation given in Section \ref{sec:statement}.
 
We note that up to our knowledge, our result is the first rigorous result leading to existence of weak solutions with quadratic growth in time for the mixing zone. It is also of interest that both numerical simulations and physical experiments predict a growth rate of the mixing zone like $\alpha\mathcal{A}g t^2$, but there is considerable disagreement about the value of the constant $\alpha$  and its possible dependence on $\mathcal{A}$ (see \cite{s_ultrahigh,exp_dens,s_all_r}).

In future work we plan to further study the possibility of constructing solutions with different mixing zone growth rates, to investigate the optimality of the growth rates $c_\pm$ in Theorem \ref{thm:teaser_theorem}, and to explore more precisely their relation to the values from experiments and simulations.

Concerning convex integration as a tool in the investigation of unstable interface problems we have already  mentioned the papers \cite{Cordoba,Sz-Muskat,Sz-KH}. While \cite{Cordoba} shows the non-uniqueness of solutions to the incompressible porous media equation, the paper \cite{Sz-Muskat} provides the full relaxation of the equation allowing to establish sharp linear bounds for the growth of the mixing zone in the Muskat problem. The knowledge of the relaxation also opened the door to further investigations of the Muskat interface problem, see \cite{Castro_Cordoba_Faraco,Castro_Faraco_Mengual,Foerster_Sz}. We already mentioned the different relaxation approach for the incompressible porous media equation via gradient flow in \cite{Otto}, the unique solution of this relaxation approach turned out to be recovered as a subsolution in \cite{Sz-Muskat}.

Another classical instability in fluid dynamics is the Kelvin-Helmholtz instability generated by vortex sheet initial data. Regarding this instability solutions with linearly growing mixing zone have been constructed in \cite{Sz-KH} based on the computations of the relaxation of the homogeneous Euler equations in \cite{DeL-Sz-Adm}. 

There have also been some recent convex integration results for the compressible Euler \cite{Chio,Feireisl_2,Feireisl} and the inviscid Boussinesq equation \cite{Chiodaroli_Michalek}. The approach used for the compressible Euler equations ultimately relies on reducing the problem to having a finite partition of incompressible and homogeneous fluids. In \cite{Markfelder} the convex hull of the isentropic compressible Euler system has been computed, but so far not used for the construction of weak solutions via convex integration.  In the Boussinesq approximation the influence of density variations is neglected in the left-hand side of the momentum equation \eqref{eq:main}. Moreover, the result in \cite{Chiodaroli_Michalek} addressing the existence of infinitely many weak solutions to a given initial configuration requires the initial density to be of class $\cC^2$ and the obtained weak solutions to this prescribed initial data are not admissible in the sense that they violate the energy inequality. 
We would like to point out that so far there have been no convex integration results relying on the full relaxation of the compressible Euler equations nor the inhomogeneous incompressible Euler equations, the latter will be done in this paper.

The paper is organized as follows. In Section \ref{sec:statement} we present our main results, one regarding the convex integration of the inhomogeneous incompressible Euler equations regardless of initial data, and one regarding the existence  of appropriate subsolutions in the case of a flat initial interface.

In Section \ref{sec:reformulation_as_differential_inclusion} we prove that through an appropriate change of coordinates, which in fact corresponds to the way how actual experiments investigating the Rayleigh-Taylor instability are carried out \cite{exp_dens,Read,exp2}, problem \eqref{eq:main} can be recast as a differential inclusion. The differential inclusion fits in a modified version of the Tartar framework of convex integration, adapted from \cite{DeL-Sz-Adm,Sz-Muskat} to simultaneously handle the absence of the pressure from the set of constraints and the dependence of the set of constraints on $(x,t)$ due to the prescribed energy density function.

In Section \ref{sec:conditions} we prove the ingredients of the topological framework, most importantly we calculate the $\Lambda$-convex hull of the set of constraints, which forms the core of this paper.

In Section \ref{sec:conclusion} we conclude the proof of our main convex integration result, while in Section \ref{sec:subsolutions} we construct appropriate subsolutions having the growth rates presented in Theorem \ref{thm:teaser_theorem}.

\section{Statement of results}\label{sec:statement}

Let $\Omega\subset \R^n$ be a bounded domain and $T>0$. Our notion of solution to equation \eqref{eq:main} on $\Omega\times[0,T)$ is as follows.
\begin{definition}[Weak solutions]\label{def:weaksols}
Let $(\rho_0,v_0)\in L^\infty(\Omega)\times L^2(\Omega;\R^n)$ such that 
\eqref{eq:id_comp} holds a.e. in $\Omega$.
We say that $(\rho,v)\in L^\infty(\Omega\times(0,T))\times L^2(\Omega\times(0,T);\R^n)$ is a weak solution to equation \eqref{eq:main} with initial data $(\rho_0,v_0)$ if for any test functions $\Phi\in C^\infty_c(\Omega\times[0,T);\mathbb{R}^n) $, $\Psi_1\in C^\infty_c(\overline{\Omega}\times[0,T)) $, $\Psi_2\in C^\infty_c(\Omega\times[0,T)) $ such that $\Phi$ is divergence-free, we have
\begin{align*}
\int_0^T\int_{\Omega} \left[\rho v \cdot \partial_t \Phi + \langle\rho v\otimes v ,\nabla\Phi\rangle - g \rho \Phi_n \right] \ dx \ dt +\int_{\Omega}\rho_0(x)v_0 (x)\cdot \Phi(x,0)\ dx=0,\\
\int_0^T\int_\Omega v\cdot\nabla \Psi_1\:dx\:dt=0,\\
\int_0^T\int_{\Omega} \left[\rho \partial_t \Psi_2 + \rho v\cdot\nabla\Psi_2 \right] \ dx \ dt +\int_{\Omega} \rho_0 (x) \Psi_2(x,0)\ dx=0,
\end{align*}
and if $\rho(x,t)\in\set{\rho_-,\rho_+}$ for a.e. $(x,t)\in\Omega\times(0,T)$.
\end{definition}
Note that the definition of $v$ being weakly divergence-free includes the no-flux boundary condition. Moreover, the last condition automatically holds true when we deal with smooth solutions of \eqref{eq:main}, because then the density is transported along the flow associated with $v$, but for weaker notions of solutions this property does not necessarily need to be true, see for example \cite{Modena}. 
Furthermore, given a weak solution, the (in general distributional) pressure $p$ is determined up to a function depending only on time, as in the case of the homogeneous Euler equations, see \cite{Temam}.

As in the homogeneous case, one can associate with a weak solution $(\rho,v)$ an energy density function $E\in L^1(\Omega\times(0,T))$ given by
\[
E(x,t):=\frac{1}{2}\rho(x,t)\abs{v(x,t)}^2+\rho(x,t)gx_n.
\] 
Furthermore, for smooth solutions of \eqref{eq:main} one can show that $t\mapsto\int_{\Omega}E(x,t)\:dx$ is constant. For weak solutions this necessarily does not need to be true. As in the case of the homogeneous Euler equations or hyperbolic conservation laws, in order to not investigate physically irrelevant solutions we require our weak solutions to be admissible with respect to the initial energy.
\begin{definition}[Admissible weak solutions]\label{def:admissibility_of_weak_solutions}
A weak solution $(\rho,v)$ in the sense of Definition \ref{def:weaksols} is called admissible provided it satisfies the weak energy inequality
\[
\int_\Omega E(x,t)\:dx\leq \int_\Omega E(x,0)\:dx\text{ for a.e. }t\in(0,T).
\]
\end{definition}

One main contribution of the present article is the relaxation of equation \eqref{eq:main} viewed as a differential inclusion. For the formulation of the relaxation we need the linear system
\begin{align}\label{eq:linear_system_retransformed}
\begin{split}
\partial_t u+\divv S +\nabla P&=-\rho g e_n,\\
\partial_t\rho+\divv u&=0,\\
\divv v&=0,
\end{split}
\end{align}
considered on $\Omega\times (0,T)$ and with $z=(\rho,v,u,S,P)$ taking values in the space $Z=\R\times\R^n\times\R^n\times \cS_0^{n\times n}\times\R$. Here $\cS^{n\times n}_0$ denotes the space of symmetric $n\times n$ matrices with trace $0$. We will also write $\cS^{n\times n}$ for the space of symmetric matrices, $\id\in \cS^{n\times n}$ for the identity and $\lamax(S),\lamin(S)$ for the maximal, minimal resp., eigenvalue of $S\in\cS^{n\times n}$.

As usual, equations \eqref{eq:linear_system_retransformed} will be complemented by a set of pointwise constraints. Let $e:\Omega\times(0,T)\rightarrow\R_+$ be a given function %satisfying
%\begin{align}\label{eq:energycond} 
%E(x,t)-\rho_\pm g x_n \geq 0
%\end{align}
and define for $(x,t)\in\Omega\times (0,T)$ the sets
\begin{multline}\label{eq:definition_of_nonlinear_constraints}
\cK_{(x,t)}:=\left\{z\in Z: \rho\in\{\rho_-,\rho_+\},~u=\rho v,\right.\\
\left.~\rho v\otimes v-S=\left(e(x,t)-\frac{2}{n}\rho gt e_n\cdot v-\frac{1}{n}\rho g^2t^2\right)\id\right\},
\end{multline}
as well as the sets $\cU_{(x,t)}$ by requiring for $z\in\cU_{(x,t)}$ the following four inequalities to hold
\begin{gather}
\rho_-<\rho<\rho_+,\nonumber\\
\begin{split}\label{eq:conditions_for_being_in_tx_dependent_hull2}
&\frac{\rho_+}{n}\frac{\abs{u-\rho_-v+(\rho-\rho_-)gte_n}^2}{(\rho-\rho_-)^2}<e(x,t),\\
&\frac{\rho_-}{n}\frac{\abs{u-\rho_+v+(\rho-\rho_+)gte_n}^2}{(\rho-\rho_+)^2}<e(x,t),
\end{split}\\
\lamax\left(A(z)\right)<e(x,t)-\frac{2}{n}gt e_n\cdot u-\frac{1}{n}\rho g^2t^2,\label{eq:conditions_for_being_in_tx_dependent_hull}
\end{gather}
where
\[
A(z)=\frac{\rho\rho_-\rho_+v\otimes v-\rho_-\rho_+(u\otimes v+v\otimes u)+(\rho_++\rho_--\rho)u\otimes u}{(\rho_+-\rho)(\rho-\rho_-)}-S.
\]

Note that by the definition of $\cK_{(x,t)}$ in \eqref{eq:definition_of_nonlinear_constraints} every solution of \eqref{eq:linear_system_retransformed} taking values in $\cK_{(x,t)}$ a.e. is a solution to the inhomogeneous Euler equations \eqref{eq:main} with $\rho\in\set{\rho_-,\rho_+}$ and associated energy 
\begin{equation}\label{eq:relation_between_E_and_e}
E=\frac{1}{2}\rho\abs{v}^2+\rho gx_n=\frac{n}{2}e(x,t)-\rho gt e_n\cdot v-\frac{1}{2}\rho g^2t^2+\rho gx_n,
\end{equation}
which is equivalent to saying that
\[
\frac{1}{2}\rho\abs{v+gte_n}^2=\frac{n}{2}e(x,t).
\]
Conversely, if we have a solution $(\rho,v,p)$ of \eqref{eq:main} with $\rho\in\set{\rho_-,\rho_+}$ a.e., we can introduce the variables $u=\rho v$, $S=\rho v\otimes v-\frac{1}{n}\rho\abs{v}^2\id$, $P=p+\frac{1}{n}\rho\abs{v}^2$ 
to see that $z=(\rho,v,u,S,P)$ will satisfy system \eqref{eq:linear_system_retransformed} while pointwise taking values $z(x,t)\in\cK_{(x,t)}$, where $\cK_{(x,t)}$ is defined with respect to the function
\[
e(x,t):=\frac{1}{n}\rho(x,t)\abs{v(x,t)+gte_n}^2.
\]

Since the pressure $P$ does not play a role in the set of constraints $\cK_{(x,t)}$, it is convenient to consider the following projection: 
for $z=(\rho,v,u,S,P)\in Z$ we denote
\begin{align}\label{eq:projection}
\pi(z)=(\rho,v,u,S)\in \R\times\R^n\times\R^n\times \mathcal S_0^{n\times n} .
\end{align}

Using the linear system \eqref{eq:linear_system_retransformed} and the definition of $\cU_{(x,t)}$ we define relaxed solutions to \eqref{eq:main} in the following way.
\begin{definition}[Subsolutions]\label{def:subsolEuler}
Let $e:\Omega\times[0,T)\rightarrow\R_+$ be a bounded function.
We say that $z=(\rho,v,u,S,P):\Omega\times(0,T)\to Z$ is a subsolution of \eqref{eq:main} associated with $e$ and the initial data $(\rho_0,v_0)\in L^\infty(\Omega)\times L^2(\Omega;\R^n)$ satisfying \eqref{eq:id_comp} iff $\pi(z)\in L^\infty(\Omega\times(0,T);\pi(Z))$, $P$ is a distribution, 
$z$ solves \eqref{eq:linear_system_retransformed} in the sense that $v$ is weakly divergence-free (including the weak no-flux boundary condition), 
\begin{align*}
\int_0^T\int_{\Omega} \left[u \cdot \partial_t \Phi + \langle S,\nabla\Phi\rangle - g \rho \Phi_n \right] \ dx \ dt +\int_{\Omega}\rho_0(x)v_0 (x)\cdot \Phi(x,0)\ dx=0,\\
\int_0^T\int_{\Omega} \left[\rho \partial_t \Psi + u\cdot\nabla\Psi \right] \ dx \ dt +\int_{\Omega} \rho_0 (x) \Psi(x,0)\ dx=0,
\end{align*}
for any test functions $\Phi\in C^\infty_c(\Omega\times[0,T);\mathbb{R}^n)$, $\divv \Phi =0$, $\Psi\in C^\infty_c(\Omega\times[0,T))$,   
and if there exists an open set $\mathscr{U}\subset \Omega\times(0,T)$, such that the maps $(x,t)\mapsto \pi(z(x,t))$ and $(x,t)\mapsto e(x,t)$ are continuous on $\mathscr{U}$ with 
\begin{gather*}
z(x,t)\in\cU_{(x,t)},\text{ for all }(x,t)\in\mathscr{U},\\
z(x,t)\in\cK_{(x,t)},\text{ for a.e. }(x,t)\in\Omega\times(0,T)\setminus\mathscr{U}.
\end{gather*}
\end{definition}
We call $\mathscr{U}$ the mixing zone of $z$. Moreover, the subsolution is called admissible provided 
\begin{align}\label{eq:subsolenergy}
E_{sub}(x,t):=\frac{n}{2} e(x,t)- gt e_n\cdot u(x,t)-\frac{1}{2}\rho (x,t)g^2t^2+\rho(x,t) gx_n\end{align}
satisfies
\begin{align}\label{eq:weakadm}
\int_\Omega E_{sub}(x,t)\, dx\leq \int_\Omega E_{sub}(x,0)\,dx \text{ for a.e. }t\in(0,T).
\end{align}

We now can state the following criterion for the existence of infinitely many weak solutions.

\begin{theorem}\label{thm:main2} Let $n=2$ and $e:\Omega\times[0,T)\rightarrow\R_+$ be bounded.
If there exists a subsolution $z$ associated with $e$ in the sense of Definition \ref{def:subsolEuler}, then for the same initial data of the subsolution there exist infinitely many weak solutions in the sense of Definition \ref{def:weaksols}, which coincide almost everywhere on $\Omega\times(0,T)\setminus\mathscr{U}$ with $z$ and whose total energy is given by $E$ defined in \eqref{eq:relation_between_E_and_e}.
The solutions are turbulently mixing on $\mathscr{U}$ in the sense that for any open ball $B\subset\mathscr{U}$ there holds
\begin{equation}\label{eq:mixing_property}
\int_B\rho_+-\rho(x,t)\:d(x,t)\cdot\int_B\rho(x,t)-\rho_-\:d(x,t)>0.
\end{equation}
Among these weak solutions there exists a sequence $\{z_k\}_{k\geq 0}$ such that $\rho_k\rightharpoonup\rho$ in $L^2(\mathscr{U})$.
If in addition $\pi(z)$ is in $\cC^0([0,T];L^2(\Omega;\pi(Z)))$ and satisfies \eqref{eq:weakadm} with strict inequality for every $t\in(0,T]$, 
then infinitely many of the induced weak solutions are admissible in the sense of Definition \ref{def:admissibility_of_weak_solutions}.
\end{theorem}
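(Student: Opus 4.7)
The plan is to follow the Tartar / De Lellis--Székelyhidi convex integration scheme, in the variant with $(x,t)$-dependent constraints developed in \cite{Sz-Muskat}, using the three ingredients already isolated in the paper: the underdetermined linear system \eqref{eq:linear_system_retransformed}, the constraint sets $\cK_{(x,t)}$, and the open hull $\cU_{(x,t)}$. The first step is to introduce the space of admissible perturbation candidates
\[
X_0 = \left\{ \tilde z : \tilde z \text{ is a subsolution with the same } (\rho_0,v_0),\, e,\, \mathscr{U},\text{ and } \tilde z = z \text{ on } (\Omega\times(0,T))\setminus\mathscr{U} \right\},
\]
observe that by \eqref{eq:conditions_for_being_in_tx_dependent_hull2}--\eqref{eq:conditions_for_being_in_tx_dependent_hull} together with the boundedness of $e$ the projections $\pi(\tilde z)$ are uniformly bounded in $L^\infty(\mathscr{U})$, and metrize the weak-$L^2(\mathscr{U})$ topology on $\pi(X_0)$. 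Let $X$ be the closure; it is a non-empty complete metric space since $\pi(z)\in X$, so the Baire category theorem applies. The goal then reduces to showing that the subset of $\tilde z\in X_0$ for which $\pi(\tilde z)(x,t)\in\cK_{(x,t)}$ for a.e.\ $(x,t)\in\mathscr{U}$ is residual in $X$: any such element lifts, by reconstructing the pressure $P$ as a distribution from the other components, to a weak solution of \eqref{eq:main} with total energy $E$ given by \eqref{eq:relation_between_E_and_e}, and residual subsets of complete metric spaces are uncountable.

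The heart of the scheme is a quantitative perturbation lemma. Let $\Lambda\subset \pi(Z)$ be the wave cone of \eqref{eq:linear_system_retransformed}, i.e.\ the amplitudes admitting compactly supported plane-wave-type solutions of the homogeneous linear system. The content of Section \ref{sec:conditions} is precisely the geometric statement that $\cU_{(x,t)}$ is contained in the interior of the $\Lambda$-convex hull of $\cK_{(x,t)}$, uniformly on compact subsets of $\mathscr{U}$. I would use this, together with the Carathéodory-type construction of localized oscillations standard in convex integration, to produce for each $\tilde z\in X_0$ a perturbation $\bar z$ solving the homogeneous version of \eqref{eq:linear_system_retransformed}, compactly supported in $\mathscr{U}$, such that $\tilde z+\bar z\in X_0$ and
\[
\|\bar z\|_{L^2(\mathscr{U})}^2 \geq C \int_{\mathscr{U}} d\bigl(\pi(\tilde z)(x,t),\,\cK_{(x,t)}\bigr)^2\, dx\, dt,
\]
with $C>0$ depending only on the $L^\infty$ bound. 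The continuity of $\pi(\tilde z)$ and $e$ on $\mathscr{U}$, required in Definition \ref{def:subsolEuler}, is used to freeze coefficients on small balls, build plane-wave perturbations there, and sum finitely many localized pieces.

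Given this perturbation lemma, a standard Baire argument shows that the map $\tilde z\mapsto \|\pi(\tilde z)\|_{L^2(\mathscr{U})}^2$ is a Baire-$1$ function on $X$ whose continuity points coincide with the $\cK$-valued elements; these therefore form a dense $G_\delta$, and we obtain uncountably many weak solutions which agree with $z$ outside $\mathscr{U}$ and carry the prescribed energy $E$. The mixing property \eqref{eq:mixing_property} follows because on any ball $B\subset\mathscr{U}$ the vanishing of one of the two integrals would force $\rho\equiv\rho_\pm$ on $B$, and a further localized perturbation supported in $B$ (again permitted by $\cU_{(x,t)}\subset\operatorname{int}\cK^{\Lambda}_{(x,t)}$) produces another element of $X_0$ with a different density on $B$, so generic solutions cannot have this property; the same oscillating perturbations, diagonally extracted, produce a sequence $\{z_k\}$ with $\rho_k\rightharpoonup\rho$ in $L^2(\mathscr{U})$. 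For the admissibility statement, the assumed strict inequality in \eqref{eq:weakadm} together with $\pi(z)\in C^0([0,T];L^2)$ yields a uniform slack in the energy inequality; since the added perturbations contribute an $L^2$-small amount to $E_{sub}$ and the total energy of any $\cK$-valued element equals $E$ pointwise, the slack is preserved and infinitely many elements of the residual set remain admissible.

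The main obstacle is the combination of the perturbation lemma with the hull computation that feeds it: one must verify that after eliminating the pressure by the projection $\pi$, the wave cone $\Lambda$ of the mixed hyperbolic--elliptic system \eqref{eq:linear_system_retransformed} is rich enough to realise every direction needed to move points of $\cU_{(x,t)}$ towards $\cK_{(x,t)}$, while simultaneously respecting the nonlinear algebraic structure of the constraint (in particular $\rho\in\{\rho_-,\rho_+\}$ and the trace-type quadratic identity defining $\cK_{(x,t)}$) and the explicit $(x,t)$-dependence produced by the prescribed energy density $e$. This hull calculation, to be performed in Section \ref{sec:conditions}, is what the proof of Theorem \ref{thm:main2} will invoke.
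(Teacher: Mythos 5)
Your proposal follows the same Tartar--Baire category scheme as the paper (space $X_0$ of subsolutions, metrized weak-$L^2$ topology, perturbation lemma with the quantitative lower bound keyed to the distance to $\cK_{(x,t)}$, Baire-one argument with $\|\pi(\tilde z)\|_{L^2}^2$, and the mixing property via the closed nowhere-dense sets $X_B^{\rho_\pm}$), and the first four paragraphs reproduce the paper's route in outline. Two points deserve attention.

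First, a presentational point: you run the scheme directly on the system \eqref{eq:linear_system_retransformed} with the $(x,t)$-dependent sets $\cK_{(x,t)}$, whereas the paper first performs the accelerated-frame change of variables of Section~\ref{sec:reformulation_as_differential_inclusion} to pass to the gravity-free system \eqref{eq:linear_system} on $\mathscr{D}$, does all the wave-cone, hull and continuity analysis there for the cleaner constraint sets $K_{(y,t)}$, and only at the end (Section~5.2) transfers back via the affine bijection \eqref{eq:transflin} to obtain the identity $\cU_{(x,t)}=\mathrm{int}\,\cK_{(x,t)}^{co}$. Since the transformation is affine, the two viewpoints are equivalent, and one could in principle compute the hull for $\cK_{(x,t)}$ directly; but as written your proposal asserts that Section~\ref{sec:conditions} establishes the hull inclusion for $\cK_{(x,t)}$, which is not literally what that section proves -- it proves it for $K$.

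Second, and more seriously, your admissibility argument has a genuine gap. You write that the strict inequality in \eqref{eq:weakadm} for every $t\in(0,T]$ together with $\pi(z)\in\cC^0([0,T];L^2)$ ``yields a uniform slack in the energy inequality.'' It does not: the deficit
\[
\varepsilon(t)=\int_\Omega E_{sub}(x,0)-E_{sub}(x,t)\,dx
\]
tends to $0$ as $t\searrow 0$, so there is no uniform lower bound on the slack near the initial time. Moreover, an $L^2(\Omega\times(0,T))$-small perturbation gives no control on $\int_\Omega E_k(x,t)\,dx$ for individual time slices $t$. The paper deals with both problems: it invokes the refinement of the Tartar framework from \cite{Castro_Faraco_Mengual,DeL-Sz-Adm} which yields weak convergence on every time slice $\mathscr{D}(t)$ \emph{uniformly} in $t\in[0,T]$, and then, because even this does not give admissibility near $t=0$, further enlarges the definition of $X_0$ (as in \cite[Definition 2.4]{Castro_Faraco_Mengual}) by adding the pointwise-in-$t$ constraint
\[
\left| \int_\Omega  gt e_n\cdot (u-u_k)+\left(\tfrac{1}{2}g^2t^2- gx_n\right)(\rho-\rho_k)\, dx \right|  \leq \varepsilon(t)\quad\text{for all } t\in[0,T],
\]
so that every element produced by the scheme automatically respects the vanishing slack. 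Your ``$L^2$-small perturbation preserves the slack'' step would need to be replaced by this machinery; as stated it is not correct.
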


\begin{remark} 
a) The second to last two statements justify to call $\mathscr{U}$ the mixing zone and to interpret the subsolution density $\rho$ as a kind of coarse-grained or averaged density profile.\\
b)
The result carries over to the three- or higher-dimensional case by constructing suitable potentials analoguosly to \cite{DeL-Sz-Annals}, which is not done here, cf. Lemma \ref{lem:locpw}. The other parts of the proof, for example the computation of the $\Lambda$-convex hull in Section \ref{sec:convex_hull}, are carried out in arbitrary dimensions.\\
c) We will see later that the open set $\cU_{(x,t)}$ is indeed the convex hull of $\cK_{(x,t)}$. In particular we can conclude that
weak limits of solutions are subsolutions in the following sense: Let $(\rho_k,v_k)_{k\in\N}$ be a sequence of essentially bounded weak solutions of \eqref{eq:main} and define as before $u_k:=\rho_kv_k$, $S_k:=\rho_kv_k\otimes v_k-\frac{1}{n}\rho_k\abs{v_k}^2\id$. Assume that $z_k':=(\rho_k,v_k,u_k,S_k)\overset{*}{\rightharpoonup} (\rho,v,u,S)=:z'$ in $L^\infty(\Omega\times(0,T);\R\times\R^n\times\R^n\times \cS_0^{n\times n})$. Assume further that there exists a continuous bounded function $e\in\cC^0(\Omega\times(0,T))$, such that $e_k:=\frac{1}{n}\rho_k\abs{v_k+gte_n}^2\rightarrow e$ in $L^\infty(\Omega\times(0,T))$. Then $z'$ supplemented by a possibly distributional $P$ is a weak solution of the linear system \ref{eq:linear_system_retransformed} with $(z'(x,t),P(x,t))\in \overline{\cU}_{(x,t)}$ for a.e. $(x,t)\in \Omega\times(0,T)$, where $\cU_{(x,t)}$ is defined with respect to the function $e$.
\end{remark}

Our second main result addresses the construction of subsolutions associated with the initial data \eqref{eq:initialdata}. Clearly it only makes sense to consider this initial data on domains satisfying $\Omega\cap (\R^{n-1}\times\{0\})\neq \emptyset$.

\begin{definition}[Rayleigh-Taylor subsolution]\label{def:rt_subsolutions} We call a subsolution $z$ of \eqref{eq:main} a Rayleigh-Taylor subsolution (short RT-subsolution) provided the initial data is given by \eqref{eq:initialdata}
 and the subsolution is admissible with strict inequality in \eqref{eq:weakadm} for every $t\in(0,T)$. 
\end{definition}

\begin{theorem}\label{thm:rayleigh_taylor_subsolutions} 
 Let $n=2$, $\Omega=(0,1)\times (-c_-(T),c_+(T))$, where \[
c_-(t)=\frac{1}{2}\left(1-\sqrt{\frac{\rho_-}{\rho_+}}\right)gt^2,\quad c_+(t)=\frac{1}{2}\left(\sqrt{\frac{\rho_+}{\rho_-}}-1\right)gt^2.
\]
If $\rho_+>\left(\frac{4+2\sqrt{10}}{3}\right)^2\rho_-$, then
 there exists a RT-subsolution $z$ which only depends on $\frac{x_2}{t^2}$, and at time $t>0$ the mixing zone $\mathscr{U}(t):=\set{x\in\Omega:(x,t)\in\mathscr{U}}$ associated with $z$ is 
$(0,1)\times(-c_-(t),c_+(t))$.
\end{theorem}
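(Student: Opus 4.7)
I would reduce the construction to a one-dimensional self-similar profile problem in $\xi = x_2/t^2$. Seek $z$ of the form
\[
\rho(x,t) = R(\xi),\ v=0,\ u(x,t) = t(0,U(\xi))^T,\ S(x,t) = t^2\sigma(\xi)\diag(-1,1),\ P(x,t) = t^2\Pi(\xi),
\]
with energy $e(x,t) = t^2 E(\xi)$. The $t$-scalings are the unique powers that balance the linear system \eqref{eq:linear_system_retransformed}, and they force $u, S\to 0$ as $t\to 0^+$, so that combined with $R(x_2/t^2)\to\rho_\pm$ for $\pm x_2 > 0$ the initial data \eqref{eq:initialdata} are attained weakly in the sense of Definition \ref{def:subsolEuler}. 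Outside the mixing interval $\xi\in(-a_-,a_+)$, where $a_\pm := c_\pm(1) = g(\sqrt{\rho_\pm/\rho_\mp}-1)/2$, I impose the pure-fluid values $R=\rho_\pm$, $U=\sigma=0$, $E=\rho_\pm g^2/2$ and $\Pi=-\rho_\pm g\xi$, which places $z\in\cK_{(x,t)}$ directly.

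After substitution the incompressibility of $v$, the horizontal momentum, and the off-diagonal stress are automatic; continuity becomes $U'(\xi) = 2\xi R'(\xi)$, and vertical momentum $\Pi'+\sigma' = -Rg - U + 2\xi U'$. Continuity of $u$ at $\xi = \pm a_\pm$ forces $U(\pm a_\pm)=0$, and integrating the continuity ODE by parts yields the mean condition
\[
\int_{-a_-}^{a_+} R(\xi)\,d\xi = a_+\rho_+ + a_-\rho_-.
\]
I would take $R$ as an increasing monotone profile from $\rho_-$ at $-a_-$ to $\rho_+$ at $a_+$ satisfying this (for instance the rarefaction solution $2\xi = gG'(R)$ of the scalar conservation law from Section \ref{sec:subsolutions} with a flux $G$ tuned to respect both the endpoint values and the mean condition); $U$ is then explicit by quadrature. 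Because the test functions in Definition \ref{def:subsolEuler} are divergence-free, $\Pi$ does not enter the pointwise constraints and can be treated as whatever distribution makes the momentum equation hold weakly.

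The remaining freedom in $\sigma$ (with $\sigma(\pm a_\pm)=0$) and $E$ is used to arrange the three strict inequalities defining $\cU_{(x,t)}$ in the mixing zone:
\[
\tfrac{\rho_+(U+(R-\rho_-)g)^2}{2(R-\rho_-)^2}<E,\ \tfrac{\rho_-(U+(R-\rho_+)g)^2}{2(R-\rho_+)^2}<E,\ \lamax(A)+gU+\tfrac{1}{2}Rg^2<E,
\]
with $A = \diag\bigl(\sigma,\tfrac{(\rho_++\rho_--R)U^2}{(\rho_+-R)(R-\rho_-)}-\sigma\bigr)$. At the endpoints $\xi = \pm a_\pm$ two of the three denominators vanish; a Taylor expansion using $U(\pm a_\pm)=0$ and $R(\pm a_\pm)=\rho_\pm$ shows that all three right-hand sides collapse to the common exterior value $\rho_\pm g^2/2$, crucially because the identities $2a_+ + g = g\sqrt{\rho_+/\rho_-}$ and $g - 2a_- = g\sqrt{\rho_-/\rho_+}$ hold exactly for $a_\pm = c_\pm(1)$. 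In the interior the optimal choice $\sigma = \tfrac{(\rho_++\rho_--R)U^2}{2(\rho_+-R)(R-\rho_-)}$ minimises $\lamax(A)$, and then $E$ can be taken slightly above the pointwise maximum of the three lower bounds to obtain strict inclusion.

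The main obstacle is strict admissibility \eqref{eq:weakadm}. Substituting the self-similar profiles into \eqref{eq:subsolenergy} and integrating over $\Omega$ gives
\[
\int_\Omega E_{sub}(x,t)\,dx = t^4\bigl[I - \tfrac{g}{2}(\rho_+ a_+^2 - \rho_- a_-^2)\bigr] + \text{const},\quad I := \int_{-a_-}^{a_+}\!\bigl[E - gU - \tfrac{1}{2}Rg^2 + Rg\xi\bigr]d\xi,
\]
so strict decrease in $t$ is equivalent to the bracket being negative. Taking $E$ slightly above $\max(B_1,B_2,B_3)$ turns this into an explicit algebraic inequality in $r := \sqrt{\rho_+/\rho_-}$; after a detailed analysis it reduces to a quadratic in $r$ and pins down the threshold $r \geq (4+2\sqrt{10})/3$, i.e.\ $\rho_+/\rho_- \geq \bigl((4+2\sqrt{10})/3\bigr)^2$ as in the statement. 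Combined with the continuity $\pi(z)\in C^0([0,T];L^2(\Omega;\pi(Z)))$—immediate from the bounded continuous profiles and the vanishing $t$-factors at $t=0$—this yields \eqref{eq:weakadm} with strict inequality for every $t\in(0,T]$ and completes the construction of the RT-subsolution.
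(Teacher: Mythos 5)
Your self-similar reduction is essentially the same as the paper's: both set $v\equiv 0$, look for $x_1$-independent profiles $\rho=R(x_2/t^2)$, $u_2=tU(x_2/t^2)$, $S=t^2\sigma\,\diag(-1,1)$, derive the continuity ODE $U'=2\xi R'$, and identify the density as the rarefaction of a scalar conservation law with the flux $G$ appearing in Section~\ref{sec:subsolutions}. The computation collapsing the three lower bounds to $\rho_\pm g^2/2$ at the endpoints is also correct and matches the paper.

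The genuine gap is in the admissibility step, and it is not a matter of ``detailed analysis'' that you can defer. Your plan is to take $E$ \emph{slightly above} $\max(B_1,B_2,B_3)$ and then read off an algebraic inequality in $r$. But from \eqref{eq:subsolenergy}, $E$ enters $E_{sub}$ with a positive sign, so increasing $E$ strictly \emph{increases} $\int_\Omega E_{sub}(\cdot,t)\,dx$ while $\int_\Omega E_{sub}(\cdot,0)\,dx$ (pure potential energy, since $v_0\equiv 0$) is fixed. Pushing $E$ above the lower envelope therefore pushes you in the \emph{wrong} direction. In fact the paper shows that at the envelope itself — equivalently at the boundary of the hull $|\xi_2|\equiv|\eta_2|\equiv 1$, which forces $E=B_1=B_2$ and produces the flux $G_0$ — the energy balance is satisfied with \emph{exact equality} ($I_{1,-1}=0$). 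This equality is a nontrivial cancellation, not a consequence of your mean condition $\int R = a_+\rho_+ + a_-\rho_-$, and without observing it there is no starting point for the threshold.

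Because of this, strict admissibility cannot be obtained by varying $E$ alone: one must perturb $E$ and $G$ (hence $R$ and $U$) \emph{together}, i.e.\ deform $\xi_2=1+\varepsilon\bar\xi$, $\eta_2=-1+\varepsilon\bar\eta$, compute the first variation $\bar I$ of the energy-balance functional, and check whether the sign can be made positive while still entering the interior of the hull. That sign condition is the integral $\bar I=\int\bar\xi H_1+\int\bar\eta H_2$, and the paper's computation shows $H_1<0$ is \emph{never} achievable while $H_2>0$ is achievable exactly when $r^2-\tfrac83 r-\tfrac83>0$, i.e.\ $r>\tfrac{4+2\sqrt{10}}{3}$. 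The quadratic you allude to is precisely this discriminant condition, but it only appears once the perturbation of the flux is taken into account — not from letting $E\downarrow\max(B_1,B_2,B_3)$ for a fixed $R$. Until this coordinated perturbation and the equality $I_{1,-1}=0$ are in your argument, the threshold is unjustified and the construction does not yield a strict RT-subsolution.
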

An explicit description of the subsolutions and further discussion can be found after the proof of Theorem \ref{thm:rayleigh_taylor_subsolutions} in Section \ref{sec:subsolutions}.
Observe that by combining Theorem \ref{thm:main2} and Theorem \ref{thm:rayleigh_taylor_subsolutions} we arrive at the statement of Theorem \ref{thm:teaser_theorem}.

\section{Reformulation as a differential inclusion}\label{sec:reformulation_as_differential_inclusion}

The proof of Theorem \ref{thm:main2} will rely on a version of the Tartar framework for differential inclusions (cf e.g. \cite{Crippa, DeL-Sz-Adm, Tartar}), where instead of looking for weak solutions of a nonlinear problem, one looks for weak solutions of a first order linear PDE, satisfying a nonlinear algebraic constraint almost everywhere.

In order to reformulate \eqref{eq:main} into such a framework, we first observe that one can get rid of the gravity in the momentum equation  by considering the system in an accelerated domain. As mentioned earlier, this transformation corresponds to actual Rayleigh-Taylor experiments \cite{exp_dens,Read,exp2} where the instability is created by considering the stable configuration (light fluid above heavy fluid) and accelerating the surrounding container downwards.

To make this precise, let $\Omega\subset \R^n$ be a bounded domain, $T>0$ and set
\[
\mathscr{D}=\set{(y,t)\in\R^n\times(0,T):y-\frac{1}{2}gt^2e_n\in\Omega},
\]
such that for $t\in(0,T)$ the slice is given by 
\[
\mathscr{D}(t):=\set{y\in\R^n:(y,t)\in\mathscr{D}}=\Omega+\frac{1}{2}gt^2e_n.
\]
Let $(\mu,w,q)$ be a weak solution of
\begin{align}\label{eq:main0g}
\begin{split}
\partial_t(\mu w)+\text{div }(\mu w\otimes w) + \nabla q = 0,\\
\text{div } w=0,\\
\partial_t \mu + \text{div }(\mu w)=0,
\end{split}
\end{align}
on $\mathscr{D}$ for some suitable initial data satisfying \eqref{eq:id_comp} and with boundary condition
\[
(w(y,t)-gte_n)\cdot\nu_{\mathscr{D}(t)}(y)=0
\]
for $y\in\partial\mathscr{D}(t)$.
The notion of weak solution to \eqref{eq:main0g} is understood as in Definition \ref{def:weaksols}, except that now $g=0$, in the momentum and continuity equation $\Omega\times(0,T)$, $\Omega\times[0,T)$ is replaced by $\mathscr{D}$, $\mathscr{D}\cup (\Omega\times\{0\})$ resp., and the weak formulation of $\divv w=0$ including the boundary condition becomes
\begin{equation}\label{eq:divergence_free_w_with_boundary_data}
\int_{\mathscr{D}}w\cdot\nabla\Psi\:d(y,t)-\int_0^T\int_{\partial\mathscr{D}(t)}\Psi(y,t)gte_n\cdot\nu_{\mathscr{D}(t)}(y)\:dS(y)\:dt=0\text{ for all }\Psi\in\cC^\infty(\overline{\mathscr{D}}).
\end{equation}

Then if we define $y:=x+\frac{1}{2}gt^2e_n$ and set
\begin{align}\label{eq:transf}
\begin{split}
\rho(x,t)&=\mu\left(y,t\right),\\
v(x,t)&=w\left(y,t\right)-g t e_n,\\
p(x,t)&=q\left(y,t\right),
\end{split}
\end{align}
%\begin{align}\label{eq:transf}
%\begin{split}
%\rho(x,t)&=\mu\left(x+\frac{1}{2}gt^2e_n,t\right),\\
%v(x,t)&=w\left(x+\frac{1}{2}gt^2e_n,t\right)-g t e_n,\\
%p(x,t)&=q\left(x+\frac{1}{2}gt^2e_n,t\right).
%\end{split}
%\end{align}
it is straightforward to check that $(\rho,v)$ is a weak solution of \eqref{eq:main} on $\Omega\times (0,T)$ with the same initial data $(\rho_0,v_0)=(\mu_0,w_0)$. Observe also that the transformation \eqref{eq:transf} gives a bijective correspondence between solutions of \eqref{eq:main} and \eqref{eq:main0g}.

Furthermore, the formal energy associated with \eqref{eq:main0g} is given by the term $\frac{1}{2}\mu(y,t)\abs{w(y,t)}^2$.
Let us write 
\[
\frac{1}{2}\mu(y,t)\abs{w(y,t)}^2=\frac{n}{2}e(y-1/2gt^2e_n,t)
\]
for a function $e:\Omega\times[0,T)\rightarrow\R_+$. Then the total energy $E(x,t)$ associated with the original system \eqref{eq:main} is precisely given by \eqref{eq:relation_between_E_and_e}.

%Furthermore, if we denote the total energy $\frac{1}{2}\rho(x,t)|v(x,t)|^2+\rho(x,t) g x_n$ associated with \eqref{eq:main} by $E(x,t)$, we have that the total energy, which now consists only of the kinetic energy, associated with \eqref{eq:main0g} is given by
%\begin{align*}
%\frac{1}{2}\mu(x,t)|w(x,t)|^2=\frac{n}{2}(e(x,t)+\mu(x,t) h_0(x)+ \mu(x,t) h_1(t)\cdot w(x,t)),
%\end{align*}
%where
%\begin{align}\label{eq:energytrans}
%e(x,t)=\frac{2}{n}E\left(x-\frac{1}{2}gt^2e_n,t\right),\quad h_0(x)=-\frac{2}{n}gx_n,\quad h_1(t)=\frac{2}{n}gte_n.
%\end{align}

We can now reformulate \eqref{eq:main0g} as a differential inclusion by considering on $\mathscr{D}$ 
the system
\begin{align}\label{eq:linear_system}
\begin{split}
\partial_t m+\divv \sigma +\nabla q=0,\\
\divv w=0,\\
\partial_t\mu+\divv m=0,
\end{split}
\end{align}
where $z:=(\mu,w,m,\sigma,q)$ takes values in $Z=\R\times\R^n\times\R^n\times \mathcal S_0^{n\times n}\times \R$, together with the set of pointwise constraints
\begin{align}\label{eq:nonlinear_constraints_Kxt}
K_{(y,t)}=\left\{z\in Z: \mu\in\{\mu_-,\mu_+\},~m=\mu w,\ \mu w\otimes w-\sigma=e\left(y-\frac{1}{2}gt^2e_n,t\right)\id\right\},
\end{align}
where in analogy to the homogeneous Euler equations $e:\Omega\times(0,T)\rightarrow\R_+$ is given and for the sake of consistency we have denoted $\mu_\pm:=\rho_\pm$. 
We will understand weak solutions of \eqref{eq:linear_system} in the following sense.
\begin{definition}\label{def:translinweak}
We say that $z:\mathscr{D}\to Z$ is a weak solution of \eqref{eq:linear_system}
with initial data $\pi(z_0)\in L^2(\Omega;\pi(Z))$
iff $\pi(z)\in L^2(\mathscr{D};\pi(Z))$, $q$ is a distribution, $w$ satisfies \eqref{eq:divergence_free_w_with_boundary_data} and one has
\begin{align*}
\int_{\mathscr{D}} \left[m \cdot \partial_t \Phi + \langle \sigma,\nabla\Phi\rangle  \right] \ dx \ dt +\int_{\Omega}\mu_0(x)w_0 (x)\cdot \Phi(x,0)\ dx=0,\\
\int_{\mathscr{D}} \left[\mu \partial_t \Psi + m\cdot\nabla\Psi \right] \ dx \ dt +\int_{\Omega} \mu_0 (x) \Psi(x,0)\ dx=0,
\end{align*}
for any $\Phi\in C^\infty_c(\mathscr{D}\cup(\Omega\times\{0\});\mathbb{R}^n)$, $\divv \Phi =0$, $\Psi\in C^\infty_c(\mathscr{D}\cup(\Omega\times\{0\}))$.
\end{definition}

This way we have arrived at a reformulation of equation \eqref{eq:main} as a differential inclusion. The process is summarized in the following statement.
\begin{lemma}\label{lem:equivalence_between_pde_and_differential_inclusion}
Let $(\rho_0,v_0)\in L^\infty(\Omega)\times L^2(\Omega;\R^n)$ be initial data satisfying \eqref{eq:id_comp}, $e\in L^1(\Omega\times(0,T);\R_+)$ be a prescribed function. %and define $e(x,t)$, $h_0(x)$, $h_1(t)$ as in \eqref{eq:energytrans}. 
If $z=(\mu,w,m,\sigma,q)$ is a weak solution of \eqref{eq:linear_system} in the sense of Definition \ref{def:translinweak} with initial data $\mu(\cdot,0)=\rho_0$, $w(\cdot,0)=v_0$ and if $z(y,t)\in K_{(y,t)}$ for a.e. $(y,t)\in\mathscr{D}$, then the pair $(\rho,v)$ defined by \eqref{eq:transf} is a weak solution of \eqref{eq:main} on $\Omega\times(0,T)$ with initial data $(\rho_0,v_0)$. Moreover, the (possibly distributional) pressure is given by 
\[
p(x,t):=q(y,t)-\frac{1}{n}\mu(y,t)\abs{w(y,t)}^2,\quad y=x+\frac{1}{2}gt^2e_n,
\]
 and the associated energy $E$ by
\eqref{eq:relation_between_E_and_e}.
\end{lemma}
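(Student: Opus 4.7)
The plan is to reduce the statement to the combination of two observations: first, the algebraic constraint $K_{(y,t)}$ turns the linear system \eqref{eq:linear_system} into the pressureless Euler system \eqref{eq:main0g} on $\mathscr{D}$; second, the change of variables \eqref{eq:transf}, which is precisely the accelerated-frame transformation described at the beginning of Section \ref{sec:reformulation_as_differential_inclusion}, converts a solution of \eqref{eq:main0g} into a solution of \eqref{eq:main}. Both steps are essentially bookkeeping, so the task is to implement them for \emph{weak} solutions.

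\textbf{Step 1: Eliminate the nonlinear constraint.} For $z(y,t) \in K_{(y,t)}$ one has $\mu \in \{\mu_-,\mu_+\}$, $m = \mu w$, and $\mu w \otimes w - \sigma = e(x,t)\id$ with $x = y - \frac{1}{2}gt^2 e_n$. Taking traces and using $\sigma \in \mathcal{S}^{n\times n}_0$ forces $e = \frac{1}{n}\mu|w|^2$ and hence $\sigma = \mu w \otimes w - \frac{1}{n}\mu|w|^2\id$. Substituting these identities into the weak formulation of \eqref{eq:linear_system} from Definition \ref{def:translinweak}, and setting $\tilde q := q - \frac{1}{n}\mu|w|^2$, shows that $(\mu,w)$ is a weak solution of \eqref{eq:main0g} on $\mathscr{D}$ with pressure $\tilde q$; the divergence-free condition and boundary condition on $w$ are already encoded in \eqref{eq:divergence_free_w_with_boundary_data}.

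\textbf{Step 2: Transport through the change of variables.} For a test function $\Phi$ on $\Omega \times [0,T)$, define $\tilde\Phi(y,t) = \Phi(y - \frac{1}{2}gt^2 e_n, t)$, a valid test function on $\mathscr{D} \cup (\Omega \times \{0\})$; the chain rule gives $\nabla_y \tilde\Phi = (\nabla_x \Phi)$ and $\partial_t \tilde\Phi = \partial_t \Phi - gt\,\partial_{x_n}\Phi$. Feeding $\tilde\Phi$ (and analogously $\tilde\Psi$) into the weak formulation of \eqref{eq:main0g} and changing variables $y \mapsto x$ in the integrals (the Jacobian is one) produces two types of additional terms: from $\partial_t$ one picks up $-gt\,\partial_{x_n}$ acting on $\mu$ and $\mu w$, and from $\mu w \otimes w = \rho(v+gte_n)\otimes(v+gte_n)$ one obtains the cross terms $gt\,\partial_{x_n}(\rho v)$, $gt(v\cdot\nabla\rho)e_n$, and $g^2t^2(\partial_{x_n}\rho)e_n$. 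These combine, using the already transformed continuity equation for $\rho$ and the identity $\divv_y w = 0 \Leftrightarrow \divv_x v = 0$, to produce exactly the gravitational source $-\rho g e_n$ in the momentum equation of \eqref{eq:main}, while $\nabla_y \tilde q = \nabla_x p$ identifies the pressure as stated. For the divergence-free constraint, applying \eqref{eq:divergence_free_w_with_boundary_data} to $\tilde \Psi_1$ yields a boundary integral $\int gt\,\Psi_1 e_n \cdot \nu_{\mathscr{D}(t)}\, dS\, dt$ which exactly cancels the corresponding $-gt e_n$ contribution produced by $w = v + gte_n$, leaving $\int_0^T\int_\Omega v \cdot \nabla \Psi_1 = 0$; here one uses that $\nu_{\mathscr{D}(t)}(y) = \nu_\Omega(x)$ since $\mathscr{D}(t)$ is a rigid vertical translate of $\Omega$.

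\textbf{Step 3: Initial data and energy identity.} At $t = 0$ the shift is trivial, so $(\rho(\cdot,0),v(\cdot,0)) = (\mu_0,w_0) = (\rho_0,v_0)$. For the energy, $e = \frac{1}{n}\mu|w|^2$ together with $w = v + gte_n$ gives
\[
\tfrac12 \rho |v|^2 = \tfrac12 \rho |w|^2 - \rho g t\, e_n \cdot v - \tfrac12 \rho g^2 t^2 = \tfrac{n}{2}e - \rho g t\, e_n \cdot v - \tfrac12 \rho g^2 t^2,
\]
and adding $\rho g x_n$ recovers \eqref{eq:relation_between_E_and_e}. The only genuinely delicate point in the whole argument is the bookkeeping in Step 2, especially matching the boundary terms that arise from the time-dependent shift in the test functions with those in \eqref{eq:divergence_free_w_with_boundary_data}; beyond that, everything is a linear, distributional change of variables.
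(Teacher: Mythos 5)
Your proof is correct and follows the same strategy the paper indicates (the paper states the transformation \eqref{eq:transf} and says it is ``straightforward to check'' that it sends weak solutions of \eqref{eq:main0g} to weak solutions of \eqref{eq:main}, leaving the computations implicit). The only substantive thing you add is the bookkeeping: reading the constraint $K_{(y,t)}$ to get $m = \mu w$ and $\sigma = \mu w\otimes w - \tfrac{1}{n}\mu|w|^2\id$, pulling back test functions by $\tilde\Phi(y,t) = \Phi(y-\tfrac12 g t^2 e_n,t)$, noting $\partial_t\tilde\Phi = \partial_t\Phi - gt\,\partial_{x_n}\Phi$, and then using the already-established weak continuity equation with test function $gt\,\Phi_n$ to convert the residual terms into the gravity source $-g\rho\Phi_n$, and matching the boundary terms of \eqref{eq:divergence_free_w_with_boundary_data} against the divergence theorem for $gt\,\partial_{x_n}\Psi_1$ via $\nu_{\mathscr{D}(t)}(y)=\nu_\Omega(x)$. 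All of these checks are sound, and the energy and pressure identities follow as you state.
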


\section{The ingredients of the Tartar framework}\label{sec:conditions}

The general strategy of the Tartar framework relies on the following steps:
\begin{itemize}
\item finding a wave cone $\Lambda\subset Z$ such that for any $\bar{z}\in\Lambda$, one can construct a localized plane wave associated with \eqref{eq:linear_system} oscillating in the direction of $\bar{z}$;
\item calculating the $\Lambda$-convex hull of $K_{(x,t)}$ (denoted by $K_{(x,t)}^\Lambda$) and proving that one can perturb any element in its interior along sufficiently long $\Lambda$-segments, provided that one is far enough from $K_{(x,t)}$;
\item deducing an appropriate set of subsolutions using $K_{(x,t)}^\Lambda$ and proving that it is a bounded, nonempty subset of $L^2(\mathscr{D})$.
\end{itemize}

In the following subsections we execute each of the above steps in the case of the differential inclusion \eqref{eq:linear_system}, \eqref{eq:nonlinear_constraints_Kxt}. Then we can conclude the proof of Theorem \ref{thm:main2} in Section \ref{sec:conclusion} by using the Baire category method (see \cite{Cordoba,DeL-Sz-Annals,DeL-Sz-Adm,Kircheim,Tartar}).

\subsection{Localized plane waves}
We begin with the construction of plane wave-like solutions to \eqref{eq:linear_system} which are localized in space-time. 
We consider the following wave cone associated with \eqref{eq:linear_system}
\begin{align*}
\Lambda=\set{\bar{z}\in Z:\ker \begin{pmatrix}
\bar{\sigma} +\bar{q}\id & \bar{m}\\
\bar{m}^T & \bar{\mu}\\
\bar{w}^T & 0
\end{pmatrix} \neq \{0\},\quad (\bar{\mu},\bar{m})\neq0}.
\end{align*}
It has the property that for $\bar{z}\in\Lambda$ there exists $\eta\in\R^{n+1}\setminus\{0\}$ such that every $z(x,t)=\bar{z}h((x,t)\cdot\eta)$, $h\in\cC^1(\R)$ is a solution of \eqref{eq:linear_system}. In Lemma \ref{lem:locpw} below we localize these solutions by constructing suitable potentials. Note that the condition $(\bar{\mu},\bar{m})\neq0$ serves to eliminate the degenerate case when the first $n$ components of $\eta$ vanish, i.e. when one is only allowed to oscillate in time.

Recall the projection operator $\pi$ defined in \eqref{eq:projection}.

\begin{lemma}\label{lem:locpw}
There exists $C>0$ such that for any $\bar{z}\in\Lambda$, there exists a sequence 
$$z_N\in C_c^\infty(B_1(0);Z)$$
solving \eqref{eq:linear_system} and satisfying 
\begin{itemize}
\item[(i)] $d(z_N,[-\bar{z},\bar{z}])\to 0$ uniformly,
\item[(ii)] $z_N\rightharpoonup 0$ in $L^2$,
\item[(iii)] $\int\int |\pi(z_N)|^2\, dx \, dt\geq C|\pi(\bar{z})|^2.$
\end{itemize}
\end{lemma}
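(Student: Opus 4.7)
The plan is to decompose the construction into two independent pieces matching the block structure of \eqref{eq:linear_system}: the divergence constraint on $w$ on one side, and the coupled continuity/momentum block on $(\mu,m,\sigma,q)$ on the other, each handled by a standard potential. The continuity and momentum equations together are precisely equivalent to the row-wise vanishing divergence of the symmetric spacetime matrix field
\[
\bar M := \begin{pmatrix} \mu & m^T \\ m & \sigma + q\id \end{pmatrix}\in\cS^{(n+1)\times(n+1)},
\]
and for $n=2$ every such symmetric, row-divergence-free field on $\R^3$ admits the standard double-curl potential $\bar M = \curl\curl\Phi$ with $\Phi$ a symmetric matrix field, as in \cite{DeL-Sz-Annals}. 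Separately, $\divv w=0$ in $n=2$ is solved via a stream function $w=\nabla_x^\perp\psi$. Thus any compactly supported potentials $(\Phi,\psi)$ automatically produce compactly supported exact solutions of \eqref{eq:linear_system}.

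Given $\bar z\in\Lambda$, pick $\eta=(\xi,\tau)\in\R^{n+1}\setminus\{0\}$ in the kernel of the symbol matrix. The condition $(\bar\mu,\bar m)\neq 0$ forces $\xi\neq 0$: if $\xi=0$, the top block of the symbol gives $\bar m\tau=0$ and the next row gives $\bar\mu\tau=0$, so either $\tau=0$ (contradicting $\eta\neq 0$) or $(\bar\mu,\bar m)=0$. Fix a cutoff $\chi\in\cC_c^\infty(B_1(0))$ with $|\chi|\leq 1$ and set
\[
(\mu_N,m_N,\sigma_N+q_N\id) := \curl\curl\bigl(\chi\,\bar\Phi\,N^{-2}\sin(N\eta\cdot(x,t))\bigr),\qquad w_N := \nabla_x^\perp\bigl(\chi\,\beta\,N^{-1}\cos(N\eta\cdot(x,t))\bigr),
\]
where the symmetric matrix $\bar\Phi$ and scalar $\beta$ are chosen so that the leading-order symbols of these differential operators reproduce $\bar M(\bar z)$ and $\bar w$ respectively. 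Solvability for $\bar\Phi$ is exactly the kernel condition defining $\Lambda$ (the pressure entry $\bar q$ being absorbed into $\tr\bar\Phi$), and solvability for $\beta$ uses $\bar w\cdot\xi=0$. By construction $z_N=(\mu_N,w_N,m_N,\sigma_N,q_N)\in\cC_c^\infty(B_1(0);Z)$ and solves \eqref{eq:linear_system}.

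Expanding the derivatives and noting that derivatives falling on $\chi$ cost one power of $N$, one finds
\[
z_N(x,t) \;=\; \chi(x,t)\,\bar z\,\sin\!\bigl(N\eta\cdot(x,t)\bigr) + r_N(x,t),\qquad \|r_N\|_\infty = O(1/N).
\]
Property (i) then follows since $\chi\sin\in[-1,1]$ places the leading term on the segment $[-\bar z,\bar z]$ and $r_N\to 0$ uniformly; (ii) follows from Riemann--Lebesgue applied to the highly oscillatory leading term; and (iii) follows from
\[
\int|\pi(z_N)|^2\,d(x,t) \;\longrightarrow\; |\pi(\bar z)|^2\cdot\tfrac12\int\chi^2\,d(x,t),
\]
which yields a positive lower bound uniform in $\bar z$ once the symbol inversion is controlled. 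The main obstacle is exactly this symbol-level linear algebra: one must show that the symbol of $\curl\curl$ restricted to symmetric matrices is invertible onto the affine slice of spacetime matrices matching $\pi(\bar z)$ (using the freedom in $\tr\bar\Phi$ to absorb $\bar q$), precisely when $\bar z\in\Lambda$, and moreover that the preimage can be chosen with $|\bar\Phi|\lesssim|\pi(\bar z)|$ uniformly, so that the constant $C$ in (iii) is independent of $\bar z$.
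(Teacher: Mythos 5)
Your approach is correct but genuinely different from the paper's. The paper constructs potentials by hand in two dimensions and has to split into cases according to whether the time component $c$ of the oscillation direction $\eta=(\xi,c)$ vanishes: for $c\neq 0$ it uses a purely \emph{spatial} $2\times 2$ matrix potential $\phi$ with $\mu=\divv\divv\phi$, $m=-\partial_t\divv\phi$, $\sigma+q\id=\partial_{tt}\phi$ (plus a stream function for $w$), and for $c=0$ this degenerates so it patches in further first-order potentials $\tilde D(\omega)$ and $\hat D(\theta)$. Your observation that the momentum and continuity equations are equivalent to the row-wise spacetime divergence of the symmetric $(n+1)\times(n+1)$ matrix $\bar M$ vanishing, so that a single spacetime Saint--Venant (double-curl) potential $\bar M=\curl\curl\Phi$ handles both cases uniformly, replaces this case analysis with one construction; the price is a larger potential ($6$ parameters on $\R^3$ instead of $3$), and the gain is uniformity and a cleaner conceptual picture. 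Both correctly use the stream function for $\divv w=0$ and correctly read off that $(\bar\mu,\bar m)\neq 0$ forces $\xi\neq 0$.

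Two of the points you flag as remaining obstacles are in fact not issues. First, the surjectivity of the symbol of $\curl\curl$ at a fixed nonzero frequency onto $\set{M\in\cS^{(n+1)\times(n+1)}:M\eta=0}$ is exactly the exactness (on $\R^3$, hence at the symbol level) of the elasticity complex $\mathrm{def}\to\mathrm{inc}\to\mathrm{div}$; once $\bar z\in\Lambda$ gives $\bar M(\bar z)\eta=0$ for some nonzero $\eta$, a plane-wave preimage $\bar\Phi$ exists. Second, the uniformity of the constant $C$ in (iii) does not require $\abs{\bar\Phi}\lesssim\abs{\pi(\bar z)}$: since $z_N=\chi\,\bar z\sin(N\eta\cdot(x,t))+r_N$ with $\norm{r_N}_\infty=O(1/N)$, you have $\int\abs{\pi(z_N)}^2\to \tfrac12\abs{\pi(\bar z)}^2\int\chi^2$ for each fixed $\bar z$, and one may simply start the sequence at an $N_0$ (depending on $\bar z$ through $\bar\Phi$) large enough to secure $\int\abs{\pi(z_N)}^2\geq \tfrac14\abs{\pi(\bar z)}^2\int\chi^2$ for all $N\geq N_0$; the constant $C=\tfrac14\int\chi^2$ is universal. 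The size of $\bar\Phi$ affects only how large $N_0$ must be, not $C$.
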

\begin{proof}
%\todo{only $2D$!! higher can be done but is harder, see Annals paper}

We will only present the proof in the two-dimensional case, higher dimensions can be handled analogously to \cite{DeL-Sz-Annals}.

We start by observing that for any smooth functions $\psi:\mathbb{R}^{2+1}\to\mathbb{R}$, $\phi:\mathbb{R}^{2+1}\to \mathcal S^{2\times 2}$, setting $D(\phi,\psi)=(\mu,w,m,\sigma,q)$ with
\begin{align*}
\mu=\divv\divv\phi,\quad
w=\nabla^{\perp} \psi,\quad
m=-\partial_t \divv \phi,\quad
q=\frac{1}{2}\tr \partial_{tt} \phi,\quad
\sigma=\partial_{tt} \phi-q\id,
\end{align*}
implies that $D(\phi,\psi)$ solves \eqref{eq:linear_system}.

Let $S:\mathbb{R}\to\mathbb{R}$ be a smooth function, $N\geq 1$ and $\bar{z}\in\Lambda$ with $(\bar{\mu},\bar{m})\neq0$. 
It follows that
there exists 
\begin{align}\label{eq:xc}
0\neq(\xi,c)\in\ker \begin{pmatrix}
\bar{\sigma} +\bar{q}\id & \bar{m}\\
\bar{m}^T & \bar{\mu}\\
\bar{w}^T & 0
\end{pmatrix}.
\end{align}
We then treat two cases.

\textbf{Case 1: $c\neq0$}

Note that in this case we also have $\xi\neq0$, since $\xi=0$ would imply $(\bar{\mu},\bar{m})=0$.

We then set
\begin{align*}
\phi_N(x,t)&=\frac{1}{c^2}(\bar{\sigma}+\bar{q}\id)\frac{1}{N^2}S(N(\xi,c)\cdot(x,t)),\\
\psi_N(x,t)&=|\bar{w}|\frac{\text{sgn}(\xi^\perp\cdot\bar{w})}{|\xi|}\frac{1}{N}S'(N(\xi,c)\cdot(x,t)),
\end{align*}
and we claim that
\begin{align}\label{eq:c1pot}
D(\phi_N,\psi_N)=\bar{z}S''(N(\xi,c)\cdot(x,t)).
\end{align}
Indeed, using \eqref{eq:xc}, one has
\begin{align*}
\divv\divv\phi_N&=\frac{1}{c^2}\xi^T(\bar{\sigma}+\bar{q}\id)\xi S''(N(\xi,c)\cdot(x,t))\\&=\frac{1}{c^2}\xi^T(-c\bar{m})S''(N(\xi,c)\cdot(x,t))=\bar{\mu}S''(N(\xi,c)\cdot(x,t)),\\
\partial_t\divv \phi_N&=\frac{1}{c}(\bar{\sigma}+\bar{q}\id)\xi S''(N(\xi,c)\cdot(x,t))=-\bar{m} S''(N(\xi,c)\cdot(x,t)),\\
\partial_{tt}\phi_N&=c^2\frac{1}{c^2}(\bar{\sigma}+\bar{q}\id) S''(N(\xi,c)\cdot(x,t))=(\bar{\sigma}+\bar{q}\id) S''(N(\xi,c)\cdot(x,t)),\\
\nabla^{\perp}\psi_N&=\xi^{\perp}|\bar{w}|\frac{\text{sgn}(\xi^\perp\cdot\bar{w})}{|\xi|}S''(N(\xi,c)\cdot(x,t)) =\bar{w} S''(N(\xi,c)\cdot(x,t)).
\end{align*}

From here on, the localization is done in the standard fashion (e.g. as in \cite{Cordoba,DeL-Sz-Annals}). We fix $S(\cdot)=-\cos(\cdot)$ and, for $\varepsilon>0$, consider $\chi_\varepsilon\in C_c^\infty(B_1(0))$ satisfying $|\chi_\varepsilon|\leq 1$ on $B_{1}(0)$, $\chi_\varepsilon=1$ on $B_{1-\varepsilon}(0)$. It is then straightforward to check that 
$z_N=D(\chi_\varepsilon(\phi_N,\psi_N))$ satisfies the conclusions of the lemma.

\textbf{Case 2: $c=0$}

In this case we are not allowed to oscillate in time.
However, we have $\xi\neq0$, so we may also assume without loss of generality that $|\xi|=1$. On the other hand, \eqref{eq:xc} implies that there exist constants $k_1,k_2,k_3\in\mathbb{R}$ such that 
\begin{align}\label{eq:perps}
\bar{w}=k_1\xi^\perp,\quad
\bar{m}=k_2\xi^\perp,\quad
\bar{\sigma}+\bar{q}\id=k_3\xi^\perp\otimes\xi^\perp
.\end{align}
We set
\begin{align*}
\phi_{N}(x,t)=\bar{\mu}\id\frac{1}{N^2}S(N\xi\cdot x),\quad
\psi_{N}(x,t)=|\bar{w}|\frac{\text{sgn}(\xi^\perp\cdot\bar{w})}{|\xi|}\frac{1}{N}S'(N\xi\cdot x),
\end{align*}
from where with similar calculations as in Case 1, we obtain that
\begin{align}\label{eq:c21pot}
D(\phi_{N},\psi_{N})=(\bar{\mu},\bar{w},0,0,0)S''(N\xi\cdot x).
\end{align}

To handle the remaining terms $(\bar{m},\bar{\sigma},\bar{q})$, we introduce a different type of potential, as done for the homogeneous Euler equations, for instance in \cite{DeL-Sz-Annals}, Remark 2.

It can be checked through direct calculation that for any smooth function $\omega:\mathbb{R}^{2+1}\to\mathbb{R}^{2+1}$, defining $W=\curl_{(x,t)}\omega$ and $\tilde{D}(\omega)=(0,0,m,\sigma,q)$ by
\begin{align*}
m=-\frac{1}{2}\nabla^\perp W_3,\quad
\sigma+q\id=\begin{pmatrix}
\partial_2 W_1 & \frac{1}{2}(\partial_2W_2-\partial_1W_1)\\
\frac{1}{2}(\partial_2W_2-\partial_1W_1) & -\partial_1W_2
\end{pmatrix}
\end{align*}
implies that $\tilde{D}(\omega)$ solves \eqref{eq:linear_system}.

Now, if we consider $\omega$ of the form
$$\omega_N(x)=(a,b,a)\frac{1}{N^2} S(N \xi\cdot x),$$
for some constants $a,b\in\mathbb{R}$, with $S$ as before, we obtain that
\begin{gather*}
\begin{pmatrix}
\partial_2 W_1 & \frac{1}{2}(\partial_2W_2-\partial_1W_1)\\
\frac{1}{2}(\partial_2W_2-\partial_1W_1) & -\partial_1W_2
\end{pmatrix}=a\xi^\perp\otimes\xi^\perp S''(N \xi\cdot x),\\
\nabla^\perp W_3=(\xi_1b-\xi_2a)\xi^\perp S''(N \xi\cdot x).
\end{gather*}

If $\xi_1\neq 0$, it follows from \eqref{eq:perps} that setting $a=k_3$, $b=\frac{-2k_2+k_3\xi_2}{\xi_1}$ gives us
\begin{align*}
\tilde{D}(\omega_{N})=(0,0,\bar{m},\bar{\sigma},\bar{q})S''(N\xi\cdot x).
\end{align*}
from where, using \eqref{eq:c21pot}, we get
\begin{align*}
D(\phi_{N},\psi_{N})+\tilde{D}(\omega_{N})=\bar{z} S''(N\xi\cdot x).
\end{align*}
The localization is then done as in Case 1, by considering $z_N=D(\chi_\varepsilon(\phi_{N},\psi_{N}))+\tilde{D}(\chi_\varepsilon\omega_{N}).$

If $\xi_1=0$, then choosing $a=k_3$ gives us that 
\begin{align*}
\tilde{D}(\omega_{N})=\left(0,0,\frac{k_3}{2}\xi_2\xi^\perp,\bar{\sigma},\bar{q}\right)S''(N\xi\cdot x).
\end{align*}
However, it is easy to see that for any smooth function $\theta:\mathbb{R}^{2+1}\to\mathbb{R}$, $\hat{D}(\theta)=(0,0,\nabla^\perp\theta,0,0)$ also solves \eqref{eq:linear_system}. 
Therefore, we may consider the potential given by
$$\theta_N(x)=\left(k_2-\xi_2\frac{k_3}{2}\right)\frac{1}{N}S'(N\xi\cdot x),$$
we obtain that
$$\nabla^\perp\theta_N(x)=\left(k_2-\xi_2\frac{k_3}{2}\right)\xi^\perp S''(N\xi\cdot x),$$
and  using \eqref{eq:perps}, we get that
\begin{align*}
D(\phi_{N},\psi_{N})+\tilde{D}(\omega_{N})+\hat{D}(\theta_N)=\bar{z} S''(N\xi\cdot x).
\end{align*}
One may then localize this potential by the usual means in order to conclude the proof of the lemma.
\end{proof}

\subsection{\texorpdfstring{The $\Lambda$-convex hull}{The convex hull}}\label{sec:convex_hull}
%\todo{Some of the definitions here will in the end be contained in previous sections.}
%The verification of (H2) requires a good enough understanding of the convex hull $K_{(x,t)}^{co}$ or more generally t
We now turn to the set of pointwise constraints $K_{(x,t)}$, $(x,t)\in\mathscr{D}$ defined in \eqref{eq:nonlinear_constraints_Kxt}.
The $\Lambda$-convex hull $K_{(x,t)}^\Lambda$ is defined by saying that $z\in K_{(x,t)}^\Lambda$ iff for all $\Lambda$-convex functions $f:Z\rightarrow\R$ there holds $f(z)\leq \sup_{z'\in K_{(x,t)}}f(z')$, see \cite{Kircheim} for more details. In our case it turns out that the $\Lambda$-convex hull is nothing else but the usual convex hull, see Proposition \ref{prop:lambda_convex_hull} below.

For the computation of the hull we drop the $(x,t)$ dependence of the sets $K_{(x,t)}$ and consider a general set of pointwise constraints given by
\begin{align}\label{eq:constraints}
K=\set{z\in Z: \mu\in\{\mu_-,\mu_+\},~m=\mu w,~\mu w\otimes w-\sigma =e\id},
\end{align}
where $0<\mu_-<\mu_+$, $e\in\R_+$ are given constants. %and $h_1\in\R^n$ is a given vector satisfying
%\begin{equation}\label{eq:condition_on_constants_for_K_nonempty}
%e+\mu_+h_0+\mu_+\frac{n}{4}\abs{h_1}^2\geq0,\quad e+\mu_-h_0+\mu_-\frac{n}{4}\abs{h_1}^2\geq0.
%\end{equation}
%As one can see by taking the trace of $\mu w\otimes w-\sigma$, $z\in K$ these conditions are necessary for $K_\pm:=K\cap \set{z\in Z:\mu=\mu_\pm}$ to be non-empty. In the $(x,t)$ dependent case, when $e,h_0,h_1$ are given by \eqref{eq:energytrans} observe that in view of the transformations \eqref{eq:energytrans}, if $E$ satisfies \eqref{eq:energycond},
%then \eqref{eq:condition_on_constants_for_K_nonempty} also holds.

%\todo{To be added later: Remark about $h_0$, $h_1$ and $e$. Backtransformation of $\overline{U}$ to $(\rho,v)$ coordinates.}

Define $Z_0:=\set{z\in Z:\mu\in(\mu_-,\mu_+)}$ and $T_+,T_-,Q:Z_0\rightarrow \R$, $M:Z_0\rightarrow \cS^{n\times n}$,
\begin{gather*}
M(z)=\frac{\mu\mu_-\mu_+w\otimes w-\mu_-\mu_+(m\otimes w+w\otimes m)+(\mu_++\mu_--\mu)m\otimes m}{(\mu_+-\mu)(\mu-\mu_-)}-\sigma,\\
Q(z)=\lamax(M(z)),\quad T_{\pm}(z)=\frac{\mu_\pm}{n}\frac{\abs{m-\mu_\mp w}^2}{(\mu-\mu_\mp)^2},
\end{gather*}
as well as the open set 
\begin{equation}\label{eq:definition_of_U}
U=\set{z\in Z:\mu\in(\mu_-,\mu_+),~T_+(z)<e,~T_-(z)<e,~Q(z)<e}.
\end{equation}

\begin{proposition}\label{prop:lambda_convex_hull}
The $\Lambda$-convex hull of $K$ coincides with the convex hull of $K$ and is given by $\overline{U}$, i.e., $K^\Lambda=K^{co}=\overline{U}$.
\end{proposition}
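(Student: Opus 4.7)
Since every convex function is $\Lambda$-convex, the class of $\Lambda$-convex functions is larger than the class of convex functions, hence $K^\Lambda \subseteq K^{co}$ automatically. I plan to establish the three-way equality via the chain
\[
K^\Lambda \;\subseteq\; K^{co} \;\subseteq\; \overline U \;\subseteq\; K^\Lambda,
\]
which reduces the proposition to two inclusions: (i) $K\subseteq \overline U$ together with convexity of $\overline U$ (which yields $K^{co}\subseteq \overline U$), and (ii) $\overline U \subseteq K^\Lambda$.

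\textbf{Step (i): convexity of $\overline U$ and $K\subseteq\overline U$.} For $K\subseteq\overline U$, any convex combination $\lambda z^+ + (1-\lambda)z^-$ of two elements of $K$ with densities $\mu_+$ and $\mu_-$ lies in $\partial U$ (each defining inequality saturates), and as $\lambda\to 0$ or $1$ we recover arbitrary points of $K$ as limits. For convexity of $\overline U$, I verify that each defining sublevel set is convex. The sets $\{T_\pm\leq e\}$ are handled by rewriting the constraint as $\sqrt{\mu_\pm}\,|m-\mu_\mp w|\leq \sqrt{ne}\,|\mu-\mu_\mp|$; the left side is a convex norm of a linear function of $(m,w)$, the right is linear in $\mu$, and the triangle inequality gives convexity of the sublevel set. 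For $\{Q\leq e\}$ the key is the identity
\[
M(z)+\sigma \;=\; \frac{\mu_+\,(m-\mu_- w)(m-\mu_- w)^T}{(\mu_+-\mu_-)(\mu-\mu_-)} \;+\; \frac{\mu_-\,(\mu_+ w-m)(\mu_+ w-m)^T}{(\mu_+-\mu_-)(\mu_+-\mu)},
\]
verified by direct computation and matching coefficients of $m\otimes m$, $m\otimes w + w\otimes m$, $w\otimes w$. Each summand is the matrix perspective of the vector map $X\mapsto XX^T$, and the classical fact that $(X,t)\mapsto XX^T/t$ is jointly matrix-convex on $\{t>0\}$ (a Schur-complement argument) shows that $M(z)+\sigma$ is matrix-convex in $(\mu,w,m)$. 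Since the constraint reads $(M(z)+\sigma)-\sigma\preceq e\,\id$, it is of the form ``matrix-convex in $(\mu,w,m)$ minus linear in $\sigma$ is $\preceq 0$,'' defining a convex subset of $Z$. Intersecting the three convex sets and taking closure gives convexity of $\overline U$.

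\textbf{Step (ii): $\overline U\subseteq K^\Lambda$.} Here I rely on the standard convex-integration geometric lemma: for every $z\in U$ there exists $\bar z\in\Lambda$ with $\pi(\bar z)\neq 0$ such that the segment $z+[-\bar z,\bar z]\subseteq\overline U$ and $|\pi(\bar z)|\geq c_0\cdot\dist(z,\partial U)$. Given such a direction, any $\Lambda$-convex $f$ satisfies $f(z)\leq\tfrac{1}{2}(f(z+\bar z)+f(z-\bar z))$; iterating together with the amplitude bound (which prevents the process from stagnating) drives $z$ arbitrarily close to $K$ and yields $f(z)\leq\sup_K f$, i.e.\ $z\in K^\Lambda$. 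The direction $\bar z$ is constructed by selecting $\xi\in\R^n\setminus\{0\}$ adapted to the nearest-to-saturating constraint---an eigenvector of $e\,\id-M(z)$ corresponding to $e-Q(z)$ when $Q$ is the active one, or a direction correlated with $m-\mu_\mp w$ when $T_\pm$ is active---and solving the kernel system
\begin{align*}
(\bar\sigma+\bar q\,\id)\xi + c\bar m = 0,\qquad \xi\cdot\bar m + c\bar\mu = 0,\qquad \xi\cdot\bar w = 0
\end{align*}
for the remaining components, while ensuring the non-degeneracy $(\bar\mu,\bar m)\neq 0$ required by the definition of $\Lambda$.

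\textbf{Main obstacle.} The technical heart is the geometric lemma in step (ii): producing a $\Lambda$-direction of amplitude comparable to $\dist(z,\partial U)$ that simultaneously respects all three constraints. The condition $(\bar\mu,\bar m)\neq 0$ (excluding purely temporal oscillations) can clash with the most natural direction of oscillation, and because $T_+$, $T_-$, $Q$ can saturate independently on different pieces of $\partial U$, a case analysis is unavoidable. The explicit formula for $M(z)+\sigma$ from step (i) is the principal tool, since it expresses both $M(z)$ and (through $w_\pm=(m-\mu_\mp w)/(\mu\mp\mu_\mp)$) the $T_\pm$ quantities in terms of the two ``phase velocities'' $w_\pm$, which are the natural oscillation modes and for which the slack in each constraint is explicitly computable.
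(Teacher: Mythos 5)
Your three-way inclusion chain $K^\Lambda\subseteq K^{co}\subseteq\overline U\subseteq K^\Lambda$ is exactly the structure of the paper's proof: the middle inclusion is Lemma~\ref{lem:U_convex_closure_of_U}, and the last comes from Lemma~\ref{lem:boundedness_of_U_extreme_points} combined with Kirchheim's Krein--Milman theorem for $\Lambda$-convex sets. Within step (i), your argument for the convexity of $\{Q\le e\}$ is genuinely different from, and cleaner than, the paper's Lemma~\ref{lem:Q_convex}. The identity
\[
M(z)+\sigma=\frac{\mu_+}{\mu_+-\mu_-}\cdot\frac{(m-\mu_- w)\otimes(m-\mu_- w)}{\mu-\mu_-}+\frac{\mu_-}{\mu_+-\mu_-}\cdot\frac{(\mu_+ w-m)\otimes(\mu_+ w-m)}{\mu_+-\mu}
\]
is correct (I checked the $m\otimes m$, $m\otimes w+w\otimes m$, and $w\otimes w$ coefficients), and the joint matrix-convexity of the perspective $(X,t)\mapsto XX^T/t$ then gives matrix-convexity of $M$ and hence convexity of $Q=\lamax\circ M$ in one step. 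The paper instead writes $Q(z)=\sup_\xi\bigl(g_\xi(z)-\xi^T\sigma\xi\bigr)$, reduces to a scalar quadratic form $x^TA(\mu)x$, and verifies the identity $A''(\mu)=2A'(\mu)A(\mu)^{-1}A'(\mu)$ by hand. Your perspective decomposition is more structural and avoids that computation; it is a genuine improvement worth keeping.

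Step (ii), however, contains a real gap. You state the geometric lemma only for $z\in U$ (interior points), with amplitude controlled by $\dist(z,\partial U)$, and propose to iterate. This argument as written drives $z$ only toward $\partial U$, not toward $K$, and once $z$ lands on $\partial U\setminus K$ your lemma gives you no direction to continue. What is actually needed (and what the paper proves in Lemma~\ref{lem:boundedness_of_U_extreme_points}) is that \emph{every} $z\in\overline U\setminus K$, including boundary points, admits a nondegenerate $\Lambda$-direction $\bar z$ with $z\pm\bar z\in\overline U$; together with compactness of $\pi(\overline U)$, Kirchheim's Krein--Milman lemma then yields $\overline U\subseteq K^\Lambda$ without any explicit iteration. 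The boundary case analysis is the technical heart of the proposition, and it is considerably more delicate than your sketch indicates: one must handle $z\in K_\pm'\setminus K$ (Euler directions preserving $\lamax(\mu_\pm w\otimes w-\sigma)\le e$), $z\in\overline U_0$ with $Q(z)<e$ or $T_+(z)=T_-(z)$ (the Muskat direction, which by Lemma~\ref{lem:properties_of_directions}(iii) leaves $T_\pm$ and the traceless part of $M$ invariant), and the awkward case $Q(z)=e$ with $T_+(z)\neq T_-(z)$ (an Euler direction $\bar z=(0,\bar w,\mu_\pm\bar w,\bar\sigma,\bar q)$ chosen to preserve the larger of $T_\pm$ and the maximal eigenvalue of $M$ simultaneously). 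Your phrase "eigenvector of $e\,\id-M(z)$ / direction correlated with $m-\mu_\mp w$" gestures at the right objects but does not show that a single $\bar z$ can be found that respects all active constraints at once, and the degenerate components $K_\pm'$ do not appear in your sketch at all. You also do not address compactness of $\pi(\overline U)$ (the first half of Lemma~\ref{lem:boundedness_of_U_extreme_points}), which is needed because $\overline U$ itself is unbounded in the pressure component $q$.
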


Lemma \ref{lem:U_convex_closure_of_U} below shows that the closure of $U$ can be written as 
\[
\overline{U}=K_-'\cup \overline{U}_0\cup K_+',
\]
where
\begin{align*}
\overline{U}_0&=\set{z\in Z:\mu\in(\mu_-,\mu_+),~T_+(z)\leq e,~T_-(z)\leq e,~Q(z)\leq e},\\
K_\pm'&=\{z\in Z:\mu=\mu_\pm,~m=\mu_\pm w,~\lamax(\mu_\pm w\otimes w-\sigma)\leq e\}.
\end{align*}
Moreover, Lemma \ref{lem:boundedness_of_U_extreme_points} actually shows that $K_+'$, $K_-'$ resp., is nothing but the $\Lambda$-convex hull of $K_+:=K\cap\{\mu=\mu_+\}$, $K_-:=K\cap\{\mu=\mu_-\}$ resp.. 

Furthermore, notice that if one lets $\mu_+-\mu_-\to 0$, one recovers from $\overline{U}$ exactly the convex hull of the constraints for the homogeneous Euler equations, cf. \cite{DeL-Sz-Adm}.

The proof of Proposition \ref{prop:lambda_convex_hull} relies on Lemma \ref{lem:U_convex_closure_of_U} and \ref{lem:boundedness_of_U_extreme_points}.

\begin{lemma}\label{lem:Q_convex} 
The function $Q$ is convex.
\end{lemma}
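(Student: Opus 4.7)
The plan is to use that $\lamax(A)=\sup_{|\xi|=1}\xi^T A\xi$ on symmetric matrices, which writes $Q$ as a pointwise supremum of linear functionals of $M(z)$. It therefore suffices to prove that for every fixed unit vector $\xi\in\R^n$ the map $z\mapsto \xi^T M(z)\xi$ is convex on $Z_0$; then $Q$ is a supremum of convex functions and is itself convex. Setting $a:=w\cdot\xi$ and $b:=m\cdot\xi$, both linear in $z$, the contribution $-\xi^T\sigma\xi$ is linear in $\sigma$, so the whole question reduces to checking convexity on $(\mu_-,\mu_+)\times\R\times\R$ of the scalar rational function
$$F(\mu,a,b):=\frac{\mu\mu_-\mu_+\,a^2-2\mu_-\mu_+\,ab+(\mu_++\mu_--\mu)\,b^2}{(\mu_+-\mu)(\mu-\mu_-)}.$$

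The crux is the partial-fractions identity
$$F(\mu,a,b)=\frac{\mu_+}{\mu_+-\mu_-}\cdot\frac{(b-\mu_- a)^2}{\mu-\mu_-}+\frac{\mu_-}{\mu_+-\mu_-}\cdot\frac{(\mu_+ a-b)^2}{\mu_+-\mu},$$
which one verifies by clearing denominators and matching the coefficients of $a^2$, $ab$ and $b^2$. The ansatz is natural in view of the functions $T_\pm$ from \eqref{eq:conditions_for_being_in_tx_dependent_hull2}: the numerators on the right are precisely the squared scalar analogues of $|m-\mu_\mp w|$ featured in $T_\pm$, and the denominators $\mu-\mu_\mp$ share a factor with the denominator of $F$, which is exactly what is needed to produce a perspective function.

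Granted this identity, convexity is immediate. The perspective function $(x,y)\mapsto y^2/x$ is jointly convex on $\{x>0\}\times\R$ (for instance because $y^2/x=\sup_{t\in\R}(2ty-t^2 x)$ is a supremum of affine functions), and each summand in the decomposition is this perspective composed with an affine map of $(\mu,a,b)$ whose first coordinate is strictly positive on the slab $\mu\in(\mu_-,\mu_+)$. Since the prefactors $\mu_\pm/(\mu_+-\mu_-)$ are positive, the linear combination is convex, and hence so are $F$, $\xi^T M(z)\xi$ and $Q$. The only genuinely nontrivial step is spotting the decomposition; once it is in hand the rest is mechanical, and in particular no $\Lambda$-convex considerations enter, as $Q$ turns out to be fully convex on $Z_0$.
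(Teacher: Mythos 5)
Your reduction to the scalar function $F(\mu,a,b)=\xi^TM(z)\xi+\xi^T\sigma\xi$ with $a=w\cdot\xi$, $b=m\cdot\xi$ is exactly the paper's first step, so the two arguments diverge only in how scalar convexity on the slab $\mu\in(\mu_-,\mu_+)$ is established. Your partial-fractions identity
\[
F(\mu,a,b)=\frac{\mu_+}{\mu_+-\mu_-}\cdot\frac{(b-\mu_- a)^2}{\mu-\mu_-}+\frac{\mu_-}{\mu_+-\mu_-}\cdot\frac{(\mu_+ a-b)^2}{\mu_+-\mu}
\]
checks out (the coefficients of $a^2$, $ab$, $b^2$ after clearing $(\mu_+-\mu)(\mu-\mu_-)$ all reproduce the numerator of $F$), and from there the perspective-function argument is airtight: each summand is $y^2/x$ precomposed with an affine map whose $x$-coordinate is strictly positive on the slab, and the prefactors are positive constants. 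The paper instead writes $F(\mu,x)=x^TA(\mu)x$, proves positive definiteness of $A(\mu)$, and then verifies $D^2 F\geq 0$ by completing the square in the mixed Hessian and reducing to the matrix identity $A''(\mu)=2A'(\mu)A(\mu)^{-1}A'(\mu)$, which in turn is checked by differentiating $(\mu_+-\mu)(\mu-\mu_-)A(\mu)$ twice and computing $CA(\mu)^{-1}C$. Your route buys a shorter, more conceptual argument that avoids any Hessian or matrix-inverse manipulations and makes the structure (a positive combination of two perspectives, one per endpoint $\mu_\pm$) transparent; the paper's Hessian identity, while more computational, reveals a rigidity ($D^2F[1,y]^2=0$ along a specific line $y=-A^{-1}A'x$) that is thematically aligned with its later analysis of $\Lambda$-segments. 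Both are complete proofs of the lemma.
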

\begin{proof}
We write
\[
Q(z)=\sup_{\xi\in S^{n-1}}\xi^TM(z)\xi=\sup_{\xi\in S^{n-1}}\left(g_\xi(z)-\xi^T\sigma \xi\right),
\]
where for every fixed $\xi\in S^{n-1}$ the function $g_\xi:Z_0\rightarrow \R$ is given by 
\begin{align*}
g_\xi(z)&=\xi^TM(z)\xi+\xi^T\sigma \xi\\
&=\frac{\mu\mu_-\mu_+(w\cdot\xi)^2-2\mu_-\mu_+(m\cdot\xi)(w\cdot\xi)+(\mu_++\mu_--\mu)(m\cdot\xi)^2}{(\mu_+-\mu)(\mu-\mu_-)}.
\end{align*}
We will show that every $g_\xi$ is convex. As a consequence $Q$ is convex as a supremum of convex functions. In order to do this let us complement $\xi\in S^{n-1}$ to a orthonormal basis $(\xi,v_2,\ldots,v_n)$ of $\R^n$. Expressing $w$ and $m$ with respect to this basis one sees that it is enough to show that the function $g:(\mu_-,\mu_+)\times\R^2\rightarrow \R$,
\begin{align*}
g(\mu,x)=\frac{\mu\mu_-\mu_+x_1^2-2\mu_-\mu_+x_1x_2+(\mu_++\mu_--\mu)x_2^2}{(\mu_+-\mu)(\mu-\mu_-)}
\end{align*}
is convex. We write $g(\mu,x)=x^TA(\mu)x$ with
\[
A(\mu):=\frac{1}{(\mu_+-\mu)(\mu-\mu_-)}\begin{pmatrix}
\mu\mu_-\mu_+ & -\mu_-\mu_+ \\
-\mu_-\mu_+ & \mu_++\mu_--\mu
\end{pmatrix}.
\] 
Let us fix $(\mu,x)\in(\mu_-,\mu_+)\times\R^2$ and observe that $A(\mu)$ is positive definite because $\mu\mu_-\mu_+>0$ and 
\[
\det [(\mu_+-\mu)(\mu-\mu_-)A(\mu)]=\mu_-\mu_+(\mu_+-\mu)(\mu-\mu_-)>0.
\]
Thus the restricted function $g(\mu,\cdot)$ is convex, or equivalently $D^2g(\mu,x)[0,y]^2\geq 0$ for all $y\in\R^2$. It therefore remains to show that $D^2g(\mu,x)[1,y]^2\geq 0$ for all $y\in\R^2$. By the positive definiteness of $A(\mu)$ we obtain
\begin{align*}
D^2g(\mu,x)[1,y]^2&=x^TA''(\mu)x+4y^TA'(\mu)x +2y^TA(\mu)y\\
&=2\left(y+A(\mu)^{-1}A'(\mu)x\right)^TA(\mu)\left(y+A(\mu)^{-1}A'(\mu)x\right)\\
&\phantom{=asd}+x^TA''(\mu)x -2x^TA'(\mu)A(\mu)^{-1}A'(\mu)x\\
&\geq x^T\left(A''(\mu) -2A'(\mu)A(\mu)^{-1}A'(\mu)\right)x.
\end{align*}
Now we claim that in fact $A''(\mu)=2A'(\mu)A(\mu)^{-1}A'(\mu)$, which finishes the proof.
Indeed, differentiation of the identity 
\[
(\mu_+-\mu)(\mu-\mu_-)A(\mu)=\begin{pmatrix}
\mu\mu_-\mu_+ & -\mu_-\mu_+ \\
-\mu_-\mu_+ & \mu_++\mu_--\mu
\end{pmatrix}
\]
shows that 
\begin{align}
(\mu_+-\mu)(\mu-\mu_-)A'(\mu)&=(2\mu-\mu_--\mu_+)A(\mu)+C,\label{eq:derivativeA}\\
\begin{split}
(\mu_+-\mu)^2(\mu-\mu_-)^2A''(\mu)&=2((\mu_+-\mu)(\mu-\mu_-)+(2\mu-\mu_--\mu_+)^2)A(\mu)\\
&\phantom{=jasnkfagf}+2(2\mu-\mu_--\mu_+)C,
\end{split}
\end{align}
where
\[
C:=\begin{pmatrix}
\mu_-\mu_+ & 0\\
0 & -1
\end{pmatrix}.
\]
Moreover, a straightforward computation yields
\begin{align}\label{eq:CAinversC}
CA(\mu)^{-1}C=(\mu_+-\mu)(\mu-\mu_-)A(\mu)+(\mu_-+\mu_+-2\mu)C.
\end{align}
Now \eqref{eq:derivativeA}--\eqref{eq:CAinversC} imply the identity $A''(\mu)=2A'(\mu)A^{-1}(\mu)A'(\mu)$.
\end{proof}
\begin{lemma}\label{lem:U_convex_closure_of_U} The set $U$ is convex and $\overline{U}=K_-'\cup\overline{U}_0\cup K_+'$.
In particular $K\subset \overline{U}$.
\end{lemma}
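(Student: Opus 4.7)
I plan to verify convexity and the closure decomposition separately; the main subtle step will be resolving a $0/0$-form in the definition of $M$ at $\mu=\mu_\pm$.

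For convexity, observe that $U$ is the intersection of the open slab $\{\mu_-<\mu<\mu_+\}$ with the three sublevel sets $\{Q<e\}$, $\{T_+<e\}$, $\{T_-<e\}$. The first is convex by Lemma \ref{lem:Q_convex}; each of the others, when combined with the slab, rearranges into a Lorentz-cone inequality of the form $\abs{L(z)}<\ell(z)$ with $L$ and $\ell$ linear. For instance
\[
T_+(z)<e\iff\sqrt{\mu_+/(ne)}\,\abs{m-\mu_-w}<\mu-\mu_-,
\]
which defines an open second-order cone, hence a convex set. So $U$ is convex as an intersection of four convex sets.

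For the decomposition $\overline{U}=K_-'\cup\overline{U}_0\cup K_+'$, the inclusion $(\subseteq)$ follows by taking $z_k\in U$ with $z_k\to z$: if $\mu\in(\mu_-,\mu_+)$ the inequalities pass to the limit non-strictly, giving $z\in\overline{U}_0$; if $\mu=\mu_+$, then $T_-(z_k)<e$ forces $\abs{m_k-\mu_+w_k}\le\sqrt{ne/\mu_-}\,\abs{\mu_k-\mu_+}\to 0$, so $m=\mu_+w$, and a Taylor expansion in the two small parameters $\epsilon_k:=\mu_+-\mu_k$ and $\eta_k:=m_k-\mu_+w_k=O(\epsilon_k)$ shows that the apparent singularity in the defining quotient of $M(z_k)$ cancels to give $M(z_k)\to\mu_+w\otimes w-\sigma$, hence $\lamax(\mu_+w\otimes w-\sigma)\le e$ and $z\in K_+'$; the case $\mu=\mu_-$ is symmetric. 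For the reverse inclusion $(\supseteq)$, I fix a base point $z_\ast\in U$ (say with $\mu_\ast=\tfrac12(\mu_++\mu_-)$, $w_\ast=m_\ast=0$, $\sigma_\ast=0$) and consider $z_t=(1-t)z+tz_\ast$, $t\in(0,1)$. For $z\in\overline{U}_0$, strict inequalities at $z_t$ for each of $Q$ and $T_\pm$ follow from combining the weak inequality at $z$ with the strict one at $z_\ast$ using convexity of $Q$ and the triangle inequality in the two cone conditions. For $z\in K_+'$ a short calculation yields $T_-(z_t)=T_-(z_\ast)<e$, and expresses $(m_t-\mu_-w_t)/(\mu_t-\mu_-)$ as a strictly-weighted convex combination of $w$ and $(m_\ast-\mu_-w_\ast)/(\mu_\ast-\mu_-)$, whence $T_+(z_t)<e$ by the triangle inequality (using $\mu_+\abs{w}^2/n\le\lamax(\mu_+w\otimes w-\sigma)\le e$, which follows from tracelessness of $\sigma$); finally $Q(z_t)<e$ is obtained by approximating $z$ from inside $Z_0$ by $z_\delta=(\mu_+-\delta,w,(\mu_+-\delta)w,\sigma,q)$, for which the $0/0$-resolution gives the clean identity $M(z_\delta)=(\mu_+-\delta)w\otimes w-\sigma$, and then applying convexity of $Q$ to the combination $(1-t)z_\delta+tz_\ast$ before letting $\delta\to 0$. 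The case $z\in K_-'$ is symmetric. The containment $K\subset\overline{U}$ is then immediate: any $z\in K$ has $\mu=\mu_\pm$, $m=\mu_\pm w$, and $\mu_\pm w\otimes w-\sigma=e\,\id$, so $\lamax(\mu_\pm w\otimes w-\sigma)=e$, placing $z\in K_\pm'$.

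The only non-routine step is the $0/0$-resolution of $M$ as $\mu\to\mu_\pm$ with $m\to\mu_\pm w$: a careful expansion of both numerator and denominator to first order shows the apparent singularity cancels and produces exactly the limit needed for the boundary terms $K_\pm'$ to match the closure. Everything else reduces to Lemma \ref{lem:Q_convex} together with convex-analytic bookkeeping on second-order cones and the triangle inequality.
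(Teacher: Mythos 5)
Your proposal is correct and follows the same overall strategy as the paper: convexity of $U$ via Lemma~\ref{lem:Q_convex} together with the second-order-cone rewriting of $T_\pm<e$, and the same decomposition $\overline{U}=K_-'\cup\overline{U}_0\cup K_+'$ with a $0/0$-resolution of $M$ at the boundary densities. The only place where you take a distinguishable route is the reverse inclusion $K_-'\cup\overline{U}_0\cup K_+'\subset\overline{U}$: where the paper interpolates each $z_*\in K_+'$ with a point $z'\in K$ at $\mu'=\mu_-$ (obtaining $T_-(z_j)=e$, $T_+(z_j)\le e$, $Q(z_j)\le e$, hence $z_j\in\overline{U}_0$) and takes $\overline{U}_0\subset\overline{U}$ as immediate, you fix a single strictly interior base point $z_\ast\in U$ (with $\mu_\ast=\tfrac12(\mu_++\mu_-)$ and $w_\ast=m_\ast=\sigma_\ast=0$) and interpolate toward it in all three branches. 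This makes the step $\overline{U}_0\subset\overline{U}$ explicit rather than tacit, while for $z\in K_\pm'$ it requires your extra approximation $z_\delta$ to get the $Q$-inequality (because $z\notin Z_0$ so $Q(z)$ is undefined). Both routes are correct; yours is marginally more uniform and self-contained, the paper's is marginally shorter on $K_\pm'$. Note also that your $0/0$-resolution via Taylor expansion in $\epsilon_k=\mu_+-\mu_k$, $\eta_k=m_k-\mu_+w_k$ arrives at the same limit $M(z_k)\to\mu_+w\otimes w-\sigma$ as the paper's algebraic rewriting of $M$; the cancellation you describe does occur, since the leading numerator vanishes to second order while the dangerous $\eta_k\otimes\eta_k/\epsilon_k$ term is $O(\epsilon_k)$ by the bound $\eta_k=O(\epsilon_k)$.
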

\begin{proof}
For $\mu\in(\mu_-,\mu_+)$ the two conditions $T_+(z)<e$, $T_-(z)<e$ can be rewritten as
\begin{align}\begin{split}\label{eq:bounds_given_by_Tplusminus}
\abs{m-\mu_- w}&<c_+ (\mu-\mu_-),\\
\abs{m-\mu_+ w}&<c_- (\mu_+-\mu),
\end{split}
\end{align}
where 
$
c_\pm=\left(\frac{n e}{\mu_\pm}\right)^{1/2}.
$ %Note that the constants $c_\pm$ are non-negative by \eqref{eq:condition_on_constants_for_K_nonempty}.
Using the basic triangle inequality one can check that the two conditions in \eqref{eq:bounds_given_by_Tplusminus} define a convex set. By Lemma \ref{lem:Q_convex} we already know that $Q$ is a convex function. Hence we have shown that $U$ is convex.

Now we turn to the characterization of $\overline{U}$. Clearly $\overline{U}_0\subset\overline{U}$. Let us show that $K_+'\subset\overline{U}$. The inclusion $K_-'\subset\overline{U}$ can be obtained in the same way. Let $z_*\in K_+'$. Take any $z'\in K$ with $\mu'=\mu_-$ and some sequence $(\mu_j)_{j\in\N}\subset (\mu_-,\mu_+)$ converging to $\mu_+$. Define
\begin{align*}
z_j=\frac{\mu_+-\mu_j}{\mu_+-\mu_-}z'+\frac{\mu_j-\mu_-}{\mu_+-\mu_-}z_*.
\end{align*}
Clearly $z_j\rightarrow z_*$ as $j\rightarrow \infty$. Since $z_*\in K_+'$ and $z'\in K_-$ a short calculation shows 
\begin{align*}
T_+(z_j)=\frac{\mu_+}{n}\abs{w_*}^2
=\frac{1}{n}\tr(\mu_+w_*\otimes w_*-\sigma_*)
\leq \lamax(\mu_+w_*\otimes w_*-\sigma_*)
\leq e.
\end{align*}
Similarly we obtain $T_-(z_j)=e$ and 
\begin{align*}
M(z_j)&=\frac{\mu_+-\mu_j}{\mu_+-\mu_-}\big(\mu_-w'\otimes w'-\sigma'\big)
+\frac{\mu_j-\mu_-}{\mu_+-\mu_-}\big(\mu_+w_*\otimes w_*-\sigma_*\big)\\
&=\frac{\mu_+-\mu_j}{\mu_+-\mu_-}e\id+\frac{\mu_j-\mu_-}{\mu_+-\mu_-}\big(\mu_+w_*\otimes w_*-\sigma_*\big).
\end{align*}
We conclude $Q(z_j)=\lamax(M(z_j))\leq e$.
Hence every $z_j$ and therefore also the limit $z_*$ is contained in $\overline{U}$. So far we know $K_-'\cup \overline{U}_0\cup K_+'\subset\overline{U}$.

For the other inclusion consider $(z_j)_{j\in\N}\subset U$, $z_j\rightarrow z_*$. The interesting case of course is $\mu_*\notin(\mu_-,\mu_+)$, say $\mu_*=\mu_+$. By \eqref{eq:bounds_given_by_Tplusminus} we directly see that $m_*=\mu_+w_*$. Moreover, rewriting
\begin{align*}
M(z)=\mu_-\frac{m-\mu w}{\mu-\mu_-}\otimes\frac{m-\mu_+w}{\mu_+-\mu}+\frac{m-\mu_-w}{\mu-\mu_-}\otimes m-\sigma,
\end{align*}
and a look at \eqref{eq:bounds_given_by_Tplusminus} yields
\[
\lim_{j\rightarrow \infty} M(z_j)= \mu_+w_*\otimes w_*-\sigma_*.
\]
Thus $\lamax(M(z_j))<e$, $j\in \N$ implies $z_*\in K_+'$. The case $\mu_*=\mu_-$ can again be treated by obvious adaptions. Consequently $\overline{U}= K_-'\cup \overline{U}_0\cup K_+'$.
\end{proof}
Next we introduce the most important $\Lambda$-directions.
\begin{definition}\label{def:directions}
Let $z\in Z_0$. We call $\tilde{z}(z)\in Z$ defined by 
\begin{gather*}
\tilde{\mu}=1,\quad
\tilde{w}(z)=\frac{m-\mu w}{(\mu_+-\mu)(\mu-\mu_-)},\quad
\tilde{m}(z)=w+(\mu_++\mu_--\mu)\tilde{w}(z),\\
\tilde{\sigma}(z)+\tilde{q}(z)\id =\tilde{m}(z)\otimes \tilde{m}(z)-\mu_+\mu_-\tilde{w}(z)\otimes\tilde{w}(z)
\end{gather*}
the Muskat direction associated with $z$. Here the definition of $\tilde{q}$ and $\tilde{\sigma}$ is understood as decomposition into trace and traceless part. Moreover, any vector of the form $\bar{z}=(0,\bar{w},\lambda\bar{w},\bar{\sigma},\bar{q})$, $\lambda\in\R$ is called an Euler direction provided it is contained in the wave cone $\Lambda$.
\end{definition}

Note that the Euler direction comes from the perturbations used in \cite{DeL-Sz-Annals} for the homogeneous incompressible Euler equations, while the Muskat direction is a generalization of the perturbations introduced in \cite{Sz-Muskat} for the Muskat problem  (hence the name), having the property of conserving the quantity $\frac{m-\mu w}{(\mu_+-\mu)(\mu-\mu_-)}$, as seen in the proof of the following Lemma.

\begin{lemma}\label{lem:properties_of_directions} There holds
\begin{enumerate}[(i)]
\item For any pair $(\bar{w},\bar{\sigma})\in \R^n\times  \mathcal S_0^{n\times n}$, $\bar{w}\neq0$, there exists $\bar{q}\in\R$, such that for $\lambda\in\R\setminus\{0\}$ the vector $\bar{z}=(0,\bar{w},\lambda\bar{w},\bar{\sigma},\bar{q})$ is an Euler direction.
\item The Muskat directions $\tilde{z}(z)$, $z\in Z_0$ are contained in $\Lambda$.
\item For $z\in Z_0$ define $z_t:=z+t\tilde{z}(z)$, $t\in(\mu_--\mu,\mu_+-\mu)$. Then $\tilde{z}(z_t)$, $T_\pm(z_t)$ and the traceless part $M(z_t)^\circ$ are all independent of $t$.
\item $T_+(z+t\bar{z})=T_+(z)$ for all $t\in\R$ and all Euler directions $\bar{z}$ with $\bar{m}=\mu_-\bar{w}$, as well as $T_-(z+t\bar{z})=T_-(z)$ for all $t\in\R$ and all Euler directions of the form $\bar{z}=(0,\bar{w},\mu_+\bar{w},\bar{\sigma},\bar{q})$.
\end{enumerate}
\end{lemma}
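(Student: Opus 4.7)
The plan is to address the four parts in the order (i), (ii), (iv), (iii), since the first three reduce to short direct verifications while the bulk of the effort concerns (iii).

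For (i), I would write the kernel condition for $\bar z=(0,\bar w,\lambda\bar w,\bar\sigma,\bar q)$ explicitly: the third row forces $\bar w\cdot\xi=0$, the second row $\lambda\bar w\cdot\xi=0$ is then automatic, and the first row demands $(\bar\sigma+\bar q\id)\xi=-\lambda c\bar w$. Let $P$ denote the orthogonal projection onto $\bar w^\perp$; restricted to $\bar w^\perp$, the equation reads $P(\bar\sigma+\bar q\id)\xi=0$, so $\xi$ should be an eigenvector of the symmetric operator $P\bar\sigma P|_{\bar w^\perp}$ with eigenvalue $-\bar q$. The spectral theorem delivers such a pair $(\xi,\bar q)$ with $\xi\neq 0$, and then $c\in\R$ is read off from the $\bar w$-component of the first row; the cone condition $(\bar\mu,\bar m)\neq 0$ holds since $\lambda\neq 0$ and $\bar w\neq 0$.

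For (ii), I would exploit the defining identity $\tilde\sigma+\tilde q\id=\tilde m\otimes\tilde m-\mu_+\mu_-\tilde w\otimes\tilde w$ of the Muskat direction. Eliminating $c=-\tilde m\cdot\xi$ from the second row, the first row of the kernel condition becomes $(\tilde\sigma+\tilde q\id)\xi=(\tilde m\cdot\xi)\tilde m$, which upon substitution reduces to $\mu_+\mu_-(\tilde w\cdot\xi)\tilde w=0$ and is therefore satisfied by any $\xi\in\tilde w^\perp\setminus\{0\}$; such $\xi$ exists because $n\geq 2$, and $\tilde\mu=1$ ensures $(\tilde\mu,\tilde m)\neq 0$. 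Part (iv) is then an immediate algebraic check: along $t\mapsto z+t\bar z$ with $\bar\mu=0$ and $\bar m=\mu_-\bar w$ (resp.\ $\bar m=\mu_+\bar w$), both $m-\mu_-w$ (resp.\ $m-\mu_+w$) and $\mu-\mu_-$ (resp.\ $\mu-\mu_+$) are unchanged, so $T_+$ (resp.\ $T_-$) is invariant.

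For (iii), the first task is to prove that $\tilde w$ and $\tilde m$ are themselves invariant under $z\mapsto z_t$. Writing $\beta=(\mu_+-\mu)(\mu-\mu_-)$ and $\alpha=\mu_++\mu_--\mu$, the identity $m-\mu w=\beta\tilde w$ and a direct expansion yield
\begin{equation*}
m_t-\mu_t w_t = \bigl(\beta+t(\mu_++\mu_--2\mu)-t^2\bigr)\tilde w(z) = (\mu_+-\mu_t)(\mu_t-\mu_-)\,\tilde w(z),
\end{equation*}
so $\tilde w(z_t)=\tilde w(z)$ and consequently $\tilde m(z_t)=w_t+(\alpha-t)\tilde w(z)=w+\alpha\tilde w=\tilde m(z)$. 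The same manipulations give the identity $\tilde m-\mu_\mp\tilde w=(m-\mu_\mp w)/(\mu-\mu_\mp)$, from which $(m_t-\mu_\mp w_t)/(\mu_t-\mu_\mp)=(m-\mu_\mp w)/(\mu-\mu_\mp)$ for all admissible $t$, proving the invariance of $T_\pm$.

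The hardest step is the invariance of $M(z_t)^\circ$, and this is where I expect the main technical obstacle. My plan is to rewrite $M(z)+\sigma$ purely in terms of $\tilde w$, $\tilde m$ and the scalar $\mu$. Inverting the definitions of $\tilde w$ and $\tilde m$ yields $w=\tilde m-\alpha\tilde w$ and $m=\mu\tilde m-\mu_+\mu_-\tilde w$. Substituting these into the numerator of the defining expression for $M+\sigma$ and carefully collecting the tensors $\tilde m\otimes\tilde m$, $\tilde m\otimes\tilde w+\tilde w\otimes\tilde m$ and $\tilde w\otimes\tilde w$, the factor $\beta$ cancels and one arrives at the compact form
\begin{equation*}
M(z)+\sigma = \mu\,\tilde m\otimes\tilde m - \mu_+\mu_-\bigl(\tilde m\otimes\tilde w+\tilde w\otimes\tilde m\bigr) + (\mu_++\mu_--\mu)\mu_+\mu_-\,\tilde w\otimes\tilde w.
\end{equation*}
Evaluated at $z_t$, only the scalar coefficients $\mu$ and $\alpha$ shift (by $+t$ and $-t$ respectively), so
\begin{equation*}
M(z_t)+\sigma_t - M(z)-\sigma = t\bigl(\tilde m\otimes\tilde m - \mu_+\mu_-\tilde w\otimes\tilde w\bigr) = t(\tilde\sigma+\tilde q\id).
\end{equation*}
Since $\sigma_t=\sigma+t\tilde\sigma$ by linearity, this yields $M(z_t)=M(z)+t\tilde q\id$, and the traceless part is $t$-independent. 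The bookkeeping needed to derive the compact form above is the single nontrivial computation; once it is in hand, every claim in (iii) follows in one line.
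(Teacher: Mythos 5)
Your proof is correct and takes essentially the same route as the paper: the spectral-theorem argument for (i), the choice $\xi\in\tilde w^\perp$ with $c=-\tilde m\cdot\xi$ for (ii), the one-line algebraic check for (iv), and for (iii) the identical computations showing invariance of $\tilde w,\tilde m$ followed by the rewriting $M(z)+\sigma=\mu\,\tilde m\otimes\tilde m-\mu_+\mu_-(\tilde m\otimes\tilde w+\tilde w\otimes\tilde m)+(\mu_++\mu_--\mu)\mu_+\mu_-\,\tilde w\otimes\tilde w$, which yields $M(z_t)-M(z)=t\tilde q\,\id$. The only cosmetic differences are the ordering of the parts and your use of $\tilde m-\mu_\mp\tilde w=(m-\mu_\mp w)/(\mu-\mu_\mp)$ for the $T_\pm$-invariance, where the paper writes the equivalent forms $T_+(z)=\tfrac{\mu_+}{n}|w+(\mu_+-\mu)\tilde w|^2$ and $T_-(z)=\tfrac{\mu_-}{n}|w+(\mu_--\mu)\tilde w|^2$.
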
 
\begin{proof}
(i) This basically has been shown in \cite{DeL-Sz-Adm}. We nonetheless present the short proof here as well. Let $(\bar{w},\bar{\sigma},\lambda)\in\R^n\times  \mathcal S_0^{n\times n}\times\R$, %If $\bar{w}=0$ or $\lambda=0$, then any $\bar{q}\in\R$ does the job. 
 $\bar{w}\neq 0$, $\lambda\neq 0$ and denote by $P_\perp:\R^n\rightarrow \R^n$ the orthogonal projection onto $\bar{w}^\perp$. Take $\bar{q}\in\R$, such that $-\bar{q}$ is an eigenvalue of the linear map $P_\perp\circ \bar{\sigma}:\bar{w}^\perp\rightarrow\bar{w}^\perp$, and let $\xi\in\bar{w}^\perp\setminus\{0\}$ denote a corresponding eigenvector. Furthermore, we choose $c\in\R$, such that $(\id-P_\perp)\bar{\sigma}\xi=-c\lambda\bar{w}$. Then one easily checks that 
\[
\begin{pmatrix}
\bar{\sigma}+\bar{q}\id & \lambda\bar{w}\\
\lambda\bar{w}^T & 0\\
\bar{w}^T & 0
\end{pmatrix}\begin{pmatrix}
\xi\\c
\end{pmatrix}=0.
\]

(ii) Let $z\in Z_0$, take any element $\xi\in \R^n\setminus\{0\}$ with $\tilde{w}(z)\cdot\xi=0$ and define $c:=-\tilde{m}(z)\cdot\xi$. Then 
\begin{align*}
\begin{pmatrix}
\tilde{\sigma}(z)+\tilde{q}(z)\id & \tilde{m}(z)\\
\tilde{m}(z)^T & 1\\
\tilde{w}(z)^T & 0
\end{pmatrix}\begin{pmatrix}
\xi \\
c
\end{pmatrix}=(\tilde{m}(z)\cdot \xi+c)\begin{pmatrix}
\tilde{m}(z)\\
1\\0
\end{pmatrix}=0.
\end{align*}

(iii) Let $z\in Z_0$, $t\in(\mu_--\mu,\mu_+-\mu)$, $z_t=z+t\tilde{z}(z)$. First of all observe that
\begin{align*}
(\mu_+-\mu-t)(\mu+t-\mu_-)\tilde{w}(z_t)&=m+t\tilde{m}(z)-(\mu+t)(w+t\tilde{w}(z))\\
&=m-\mu w +t(\mu_++\mu_--2\mu)\tilde{w}(z)-t^2\tilde{w}(z)\\
&=(\mu_+-\mu-t)(\mu+t-\mu_-)\tilde{w}(z).
\end{align*}
Hence $\tilde{w}(z_t)=\tilde{w}(z)$ and
\begin{align*}
\tilde{m}(z_t)&=w+t\tilde{w}(z)+(\mu_++\mu_--\mu-t)\tilde{w}(z_t)\\
&=w+(\mu_++\mu_--\mu)\tilde{w}(z)=\tilde{m}(z).
\end{align*}
The invariances $\tilde{\sigma}(z_t)=\tilde{\sigma}(z)$ and $\tilde{q}(z_t)=\tilde{q}(z)$ then follow by the definition of $\tilde{\sigma}$, $\tilde{q}$. Thus $\tilde{z}(z_t)=\tilde{z}(z)$.

Next $T_\pm(z_t)=T_\pm(z)$ follows immediately after rewriting 
\begin{align*}
T_+(z)=\frac{\mu_+}{n}\abs{w+(\mu_+-\mu)\tilde{w}(z)}^2,\quad
T_-(z)=\frac{\mu_-}{n}\abs{w+(\mu_--\mu)\tilde{w}(z)}^2.
\end{align*}
It remains to check that the traceless part of $M(z)$ is invariant along the line segment in Muskat direction. Plugging 
\begin{align*}
w&=\tilde{m}(z)-(\mu_++\mu_--\mu)\tilde{w}(z),\\
m&=\mu w +(\mu_+-\mu)(\mu-\mu_-)\tilde{w}(z)=\mu \tilde{m}(z)-\mu_-\mu_+\tilde{w}(z)
\end{align*}
into the definition of $M(z)$ leads us to
\begin{align*}
M(z)&=\mu\tilde{m}(z)\otimes \tilde{m}(z)-\mu_-\mu_+(\tilde{m}(z)\otimes\tilde{w}(z)+\tilde{w}(z)\otimes \tilde{m}(z))\\
&\phantom{=asd}+\mu_-\mu_+(\mu_++\mu_--\mu)\tilde{w}(z)\otimes\tilde{w}(z)-\sigma.
\end{align*}
Thus for the traceless part we get
\begin{align*}
M(z_t)^\circ&=M(z)^\circ+t\big(\tilde{m}(z)\otimes\tilde{m}(z)-\mu_-\mu_+\tilde{w}(z)\otimes \tilde{w}(z)\big)^\circ-t\tilde{\sigma}(z)=M(z)^\circ.
\end{align*}

(iv) obviously is true, because $m+t\bar{m}-\mu_{\pm}(w+t\bar{w})=m-\mu_\pm w$ for $\bar{m}=\mu_\pm \bar{w}$. 
\end{proof}
As a corollary, we obtain that any two points in $K$ can be connected with a $\Lambda$-direction, up to modifying the pressure, which implies that although the wave cone $\Lambda$ is not the whole space, it is still quite big (with respect to $K$).
\begin{corollary}\label{bigenoughcone}
For any $z_1,z_2\in K$, $z_1\neq z_2$, %there exists $\bar{q}\in\mathbb{R}$ such that 
one has
$z_2-z_1+(0,0,0,0,q_1-q_2)\in\Lambda.$
\end{corollary}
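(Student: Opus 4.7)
The plan is to unpack $\bar z := z_2 - z_1 + (0,0,0,0,q_1 - q_2)$ and explicitly exhibit a nonzero element of the kernel appearing in the definition of $\Lambda$. Observe first that the conclusion can only hold when $\pi(z_1) \neq \pi(z_2)$, since otherwise $\bar z = 0$ and the non-degeneracy condition $(\bar\mu,\bar m) \neq 0$ fails; I would flag this implicit hypothesis at the start. Using $m_i = \mu_i w_i$ and $\sigma_i = \mu_i w_i\otimes w_i - e\id$ from the definition \eqref{eq:constraints} of $K$, the $e\id$ contributions cancel in the difference and one reads off
\[
\bar\mu = \mu_2-\mu_1,\quad \bar w = w_2-w_1,\quad \bar m = \mu_2 w_2-\mu_1 w_1,\quad \bar\sigma = \mu_2 w_2\otimes w_2-\mu_1 w_1\otimes w_1,\quad \bar q = 0.
\]

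Next I would make the ansatz $c := -w_1 \cdot \xi$ for some $\xi \in \R^n\setminus\{0\}$ chosen so that $(w_2 - w_1) \cdot \xi = 0$; such a $\xi$ exists because either $w_1 \neq w_2$ and $n \geq 2$, or $w_1 = w_2$ in which case the condition is vacuous. Setting $\alpha := w_1 \cdot \xi = w_2 \cdot \xi$ so $c = -\alpha$, the three rows of the kernel equation collapse to
\begin{align*}
\bar w^T \xi &= 0 \quad \text{by choice of } \xi,\\
\bar m^T\xi + \bar\mu c &= (\mu_2-\mu_1)\alpha - (\mu_2-\mu_1)\alpha = 0,\\
\bar\sigma\xi + \bar m c &= \alpha(\mu_2 w_2 - \mu_1 w_1) - \alpha(\mu_2 w_2 - \mu_1 w_1) = 0,
\end{align*}
so $(\xi,c) \neq 0$ lies in the required kernel.

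The non-degeneracy $(\bar\mu,\bar m) \neq 0$ is then checked by cases: $\bar\mu \neq 0$ if $\mu_1 \neq \mu_2$, while in the case $\mu_1 = \mu_2 =: \mu_\pm$ the assumption $\pi(z_1) \neq \pi(z_2)$ forces $w_1 \neq w_2$, so $\bar m = \mu_\pm(w_2 - w_1) \neq 0$. No step presents a real obstacle; conceptually one sees that $\bar z$ coincides (modulo the pressure adjustment already built into the statement) with an Euler direction in the sense of Definition \ref{def:directions} in the equal-$\mu$ case, and with a scalar multiple of the Muskat direction $\tilde z\bigl((z_1+z_2)/2\bigr)$ at the midpoint $(z_1+z_2)/2 \in Z_0$ in the unequal-$\mu$ case, so the corollary can alternatively be deduced as an instance of parts (i) and (ii) of Lemma \ref{lem:properties_of_directions}.
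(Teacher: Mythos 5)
Your argument is correct and more streamlined than the paper's. The paper splits into cases: when $\mu_1 \neq \mu_2$ it checks the sufficient criterion extracted from the proof of Lemma~\ref{lem:properties_of_directions}(ii), namely that $\bar\mu(\bar\sigma+\bar q\id)=\bar m\otimes\bar m+\gamma\bar w\otimes\bar w$ for some $\gamma\in\R$ forces $\bar z\in\Lambda$, and then verifies this identity with $\gamma=-\mu_-\mu_+$; when $\mu_1=\mu_2$ it returns to the eigenvalue construction in Lemma~\ref{lem:properties_of_directions}(i) and observes that for $z_1,z_2\in K$ the operator $P_\perp\circ\bar\sigma$ vanishes identically on $\bar w^\perp$, so that $\bar q=0$ is an admissible choice of pressure. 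If one unfolds either case one finds precisely your kernel vector $(\xi,c)$ with $\xi\perp(w_2-w_1)$ and $c=-w_1\cdot\xi$, so the underlying linear algebra is the same; what your version gains is that the single identity $\bar\sigma\xi=(w_1\cdot\xi)\,\bar m$ for $\xi\perp\bar w$ makes the verification uniform in $\mu_1,\mu_2$ and dispenses with any appeal to Lemma~\ref{lem:properties_of_directions}. You are also right to flag the degenerate situation: if $z_1$ and $z_2$ differ only in the pressure component, then the adjusted vector is zero and fails $(\bar\mu,\bar m)\neq0$, so the hypothesis should really be $\pi(z_1)\neq\pi(z_2)$. This has no bearing on the only place the corollary is used, namely Lemma~\ref{lem:segments}, where $|\pi(\bar z)|>0$ holds by construction, but it is a genuine imprecision in the statement as written.
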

\begin{proof}
In the case $\mu_1\neq \mu_2$ we assume without loss of generality that $\mu_1=\mu_{-}$ and $\mu_2=\mu_{+}$. Set $\bar{z}=z_2-z_1+(0,0,0,0,q_1-q_2)$, such that $\bar{q}=0$.
Similarly to (ii) from Lemma \ref{lem:properties_of_directions} one can prove that $\bar{z}\in \Lambda$ if 
\begin{align}\label{coneref}
\bar{\mu}(\bar{\sigma}+\bar{q}\id)=\bar{m}\otimes\bar{m}+\gamma\bar{w}\otimes\bar{w},
\end{align}
for some $\gamma\in\mathbb{R}.$ 

Since $z_i\in K$, we have 
$$\sigma_i=\mu_i w_i\otimes w_i-e\id,$$
for $i=1,2.$ Therefore, we obtain that
$$\bar{\sigma}=(\bar{\sigma}+\bar{q}\id)=\mu_2 w_2\otimes w_2-\mu_1 w_1\otimes w_1.$$
Through a simple calculation one can then show that \eqref{coneref} holds for $\gamma=-\mu_{-}\mu_{+}$.

If $\mu_1=\mu_2$, recall that in the proof of Lemma \ref{lem:properties_of_directions} (i) a suitable pressure $\bar{q}$ has been choosen to be an eigenvalue of $-P_\perp\circ\bar{\sigma}:\bar{w}^\perp\rightarrow\bar{w}^\perp$. But $z_1,z_2\in K$ in fact implies that $P_\perp\circ \bar{\sigma}$ vanishes on all of $\bar{w}^\perp$ and we can conclude the statement.
\end{proof}
Recall the definition of $\pi:Z\rightarrow\R\times\R^n\times\R^n\times \cS_0^{n\times n}$ in \eqref{eq:projection}.
\begin{lemma}\label{lem:boundedness_of_U_extreme_points}
The projection $\overline{U/\sim}:=\overline{\pi(U)}$ is bounded in terms of $e$, $\mu_\pm$, $n$ and hence compact. Moreover, for every $z\in\overline{U}\setminus K$ there exists $\bar{z}\in \Lambda\setminus\{0\}$, such that $z\pm\bar{z}\in \overline{U}$.
\end{lemma}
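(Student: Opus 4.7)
The two claims are independent. I treat the boundedness first and then sketch the construction of the $\Lambda$-direction.

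For the bound on $\overline{\pi(U)}$, I fix $z\in U$ and use the reformulation \eqref{eq:bounds_given_by_Tplusminus} of $T_\pm(z)<e$, i.e.\ $|m-\mu_\mp w|<c_\pm(\mu-\mu_\mp)$. Subtracting the two bounds yields $(\mu_+-\mu_-)|w|\le(c_++c_-)(\mu_+-\mu_-)$, bounding $|w|$ and then $|m|$. For $\sigma$ the direct formula for $M(z)+\sigma$ is not manifestly bounded as $\mu\to\mu_\pm$, so I would rewrite it in terms of $a:=m-\mu_-w$, $b:=m-\mu_+w$, obtaining the nonsingular identity
\[
M(z)+\sigma=\frac{\mu_-}{(\mu_+-\mu)(\mu_+-\mu_-)}\,b\otimes b+\frac{\mu_+}{(\mu-\mu_-)(\mu_+-\mu_-)}\,a\otimes a,
\]
whose operator norm is bounded by a constant depending only on $e,\mu_\pm,n$ in view of $T_\pm(z)<e$. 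Since $\tr\sigma=0$, $\tr M(z)=\tr(M(z)+\sigma)$ is controlled, and combined with $\lambda_{\max}(M(z))<e$ this bounds the full spectrum of $M(z)$, hence $\sigma$. For points in the closed faces $K_\pm'\subset\overline U$, taking the trace of the constraint $\lambda_{\max}(\mu_\pm w\otimes w-\sigma)\le e$ first gives $\mu_\pm|w|^2\le ne$, from which the remaining bounds follow analogously.

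For the perturbation claim I split $z\in\overline U\setminus K$ according to the decomposition $\overline U=K_-'\cup\overline U_0\cup K_+'$ from Lemma~\ref{lem:U_convex_closure_of_U}. If $\mu\in(\mu_-,\mu_+)$, the Muskat direction $\tilde z(z)$ of Definition~\ref{def:directions} lies in $\Lambda\setminus\{0\}$ by Lemma~\ref{lem:properties_of_directions}(ii). Lemma~\ref{lem:properties_of_directions}(iii) shows that $T_\pm$ and $M^\circ$ are invariant along $z+t\tilde z(z)$, so a short computation using $\tilde z(z_t)=\tilde z(z)$ yields
\[
\tr M(z+t\tilde z(z))=\tr M(z)+t\bigl(|\tilde m(z)|^2-\mu_-\mu_+|\tilde w(z)|^2\bigr).
\]
Whenever either this slope vanishes or $\lambda_{\max}(M(z))<e$, a sufficiently small multiple of $\tilde z(z)$ already yields $z\pm\bar z\in\overline U_0$. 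In the remaining case $\lambda_{\max}(M(z))=e$ with nonzero slope I would correct $\tilde z(z)$ by an Euler-type direction $(0,\bar w,\lambda\bar w,\bar\sigma,\bar q)\in\Lambda$, with $\bar w$ in a spectral subspace of $M(z)$ corresponding to eigenvalues strictly less than $e$ and with $\lambda,\bar\sigma$ chosen to cancel the first-order drifts of $\tr M$ and of the active $T_\pm$; Lemma~\ref{lem:properties_of_directions}(i) supplies an admissible $\bar q$.

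When $\mu=\mu_\pm$, say $\mu=\mu_+$, a direction with $\bar\mu\ne 0$ pushes $\mu$ outside $[\mu_-,\mu_+]$ for one of the two signs, so I restrict to Euler directions $\bar z=(0,\varepsilon\bar w,\mu_+\varepsilon\bar w,\bar\sigma,\bar q)$, for which $z\pm\bar z$ remains on the face $\mu=\mu_+$. Since $z\notin K$, the matrix $A:=\mu_+w\otimes w-\sigma$ satisfies $\lambda_{\max}(A)\le e$ and $A\ne e\,\id$, so it has a unit eigenvector $\hat v$ with eigenvalue strictly less than $e$. Taking $\bar w=\hat v$ and choosing the symmetric traceless $\bar\sigma$ so as to cancel the cross terms of $\mu_+(w\pm\varepsilon\hat v)\otimes(w\pm\varepsilon\hat v)-(\sigma\pm\bar\sigma)$ mixing $\hat v$ and $\hat v^\perp$ leaves the symmetric remainder $A+\varepsilon^2\mu_+\hat v\otimes\hat v$, whose maximal eigenvalue is $\le e$ for $\varepsilon$ small by the spectral gap $e-\hat v^{T}A\hat v>0$. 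The pressure $\bar q$ is again furnished by Lemma~\ref{lem:properties_of_directions}(i).

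The technical heart is the boundary subcase with $\mu\in(\mu_-,\mu_+)$ and $\lambda_{\max}(M(z))=e$ together with a nonzero Muskat trace slope, in particular the degenerate situation $M(z)=e\,\id$: the Muskat direction alone fails, and the Euler correction must be simultaneously tangent to several active constraints while remaining in the wave cone $\Lambda$. Verifying that the algebraic conditions on $\bar w,\lambda,\bar\sigma$ leave enough freedom, especially in low dimensions where the relevant spectral subspaces may be one-dimensional, is the main obstacle.
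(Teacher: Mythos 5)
Your boundedness argument is correct and slightly streamlines the paper's version: where the paper estimates $M(z)+\sigma$ through the bilinear-form expression and the bounds \eqref{eq:bounds_given_by_Tplusminus}, you write the nonsingular factorization
\[
M(z)+\sigma=\frac{\mu_-}{(\mu_+-\mu)(\mu_+-\mu_-)}\,b\otimes b+\frac{\mu_+}{(\mu-\mu_-)(\mu_+-\mu_-)}\,a\otimes a,
\]
which checks out and makes the claim manifest; the passage from a bound on $\tr M$ plus $\lamax(M)<e$ to a bound on $\sigma$ is then identical. The face cases $z\in K_\pm'\setminus K$ and the interior Muskat case (trace slope zero, or $\lamax(M)<e$) also match the paper.

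The gap is precisely in the subcase you flag as ``the technical heart'': $\mu\in(\mu_-,\mu_+)$, $Q(z)=\lamax(M(z))=e$, and $T_+(z)\ne T_-(z)$. Two issues arise.
\textbf{First,} the degenerate situation $M(z)=e\,\id$ that worries you \emph{cannot occur} under these side conditions; this is the observation the paper leans on. Indeed, $M(z)=e\,\id$ forces $M(z)^\circ=0$, hence
\[
e \;=\; Q(z)\;=\;\tfrac{1}{n}\tr M(z)\;=\;\tfrac{\mu_+-\mu}{\mu_+-\mu_-}T_-(z)+\tfrac{\mu-\mu_-}{\mu_+-\mu_-}T_+(z),
\]
and since $T_\pm(z)\le e$ and both coefficients are positive, this forces $T_+(z)=T_-(z)=e$, contradicting the nonzero-slope assumption. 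So $\lamin(M(z))<e$ is automatic here, and in \emph{any} dimension $n\ge 2$ there is a codimension-one spectral gap to exploit.
\textbf{Second,} your plan to ``correct'' $\tilde z(z)$ by adding an Euler-type vector would not in general stay inside the wave cone: $\Lambda$ is a cone cut out by a rank/determinant condition, not a linear space, and the sum of a Muskat direction (whose $\bar\mu$ is $1$) and an Euler direction (whose $\bar\mu$ is $0$) does not satisfy the ``rank-one-like'' algebraic identity that puts Muskat directions into $\Lambda$. The paper sidesteps this entirely: once $\lamin(M(z))<e$ is secured, it abandons the Muskat direction in this subcase and uses a \emph{pure} Euler direction $\bar z=(0,\bar w,\mu_+\bar w,\bar\sigma,\bar q)$ (say, when $T_-(z)>T_+(z)$), with $\bar w$ an eigenvector for the strictly-smaller eigenvalue and $\bar\sigma$ chosen so that $M(z+t\bar z)$ differs from $M(z)$ only in the $\bar w\otimes\bar w$ block. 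This preserves $T_-$ \emph{exactly} (Lemma~\ref{lem:properties_of_directions}(iv)), keeps $\lamax(M)$ unchanged for small $|t|$ by the spectral gap, and bounds $T_+$ by continuity — no first-order cancellation scheme is needed. You should replace the ``Euler correction of $\tilde z$'' by this pure-Euler argument and include the exclusion of $M(z)=e\,\id$ to close the proof.

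Finally, a small point of tidiness: you describe the two halves of the lemma as ``independent'', but the boundedness of $\overline{\pi(U)}$ is what makes the Krein--Milman argument (Lemma~4.16 of \cite{Kircheim}) applicable in Proposition~\ref{prop:lambda_convex_hull}, so in the paper's logic the two halves are both used together downstream even if their proofs are separate.
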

\begin{proof}
We first prove that $U/\sim$ is bounded in terms of $e,\mu_-,\mu_+$ and the dimension $n$. Let $z\in U$. Obviously $\mu\in(\mu_-,\mu_+)$ is bounded. The inequalities \eqref{eq:bounds_given_by_Tplusminus} imply that there exists  a constant $c=c(e,\mu_-,\mu_+,n)>0$, such that 
\begin{align}\label{eq:bounds_by_Tplusminus_2}
\abs{m-\mu_-w}\leq c(\mu-\mu_-),\quad \abs{m-\mu_+w}\leq c(\mu_+-\mu).
\end{align}
Adapting the constant when necessary we obtain
\[
\abs{m}=\abs{\frac{\mu_+}{\mu_+-\mu_-}(m-\mu_-w)-\frac{\mu_-}{\mu_+-\mu_-}(m-\mu_+w)}\leq c,
\]
which then also implies $\abs{w}\leq c$. Next observe that the matrix $M(z)$ can be rewritten to
\begin{align*}
M(z)&=-\mu\frac{m-\mu_-w}{\mu-\mu_-}\otimes\frac{m-\mu_+w}{\mu-\mu_+}+\frac{m-\mu_-w}{\mu-\mu_-}\otimes m +m\otimes \frac{m-\mu_+w}{\mu-\mu_+}-\sigma.
\end{align*}
Hence $M(z)+\sigma$ is uniformly bounded by \eqref{eq:bounds_by_Tplusminus_2}. As a consequence we obtain $\abs{\tr M(z)}\leq c$. This bound on the trace together with $\lamax(M(z))=Q(z)<e$, due to the fact that $z\in U$, gives us a uniform bound on the whole spectrum of $M(z)$. Therefore $M(z)+\sigma$ and $M(z)$ are both uniformly bounded. Consequently $\abs{\sigma}\leq c$, and $\overline{U/\sim}$ is compact.

Next we show that any $z\in\overline{U}\setminus K$ can be perturbed along a $\Lambda$-segment without leaving $\overline{U}$. Recall that $\overline{U}=\overline{U}_0\cup K_+'\cup K_-'$ and $K\subset K_+'\cup K_-'$ by Lemma \ref{lem:U_convex_closure_of_U}. 

If $z\in K_+'\setminus K$, we can find similarly as in \cite{DeL-Sz-Adm} a suitable Euler direction $\bar{z}=(0,\bar{w},\mu_+\bar{w},\bar{\sigma},\bar{q})\in \Lambda$ such that $z+t\bar{z}\in K_+'$ for $\abs{t}$ small enough. Indeed, by a change of basis we can restrict ourselves to the case that $\mu_+w\otimes w-\sigma$ is diagonal. Denote the entries by $\lambda_1\geq \lambda_2 \geq \ldots \geq \lambda_n$, where $\lambda_1 \leq e$ and $\lambda_n<e$. Let $e_1,\ldots,e_n$ denote the canonical basis of $\R^n$. We take $\bar{w}=e_n$ and 
\[
\bar{\sigma}=\mu_+e_n\otimes w+\mu_+w\otimes e_n-\alpha e_n\otimes e_n,
\]
where $\alpha=2\mu_+w_n$ makes $\bar{\sigma}$ trace free. It follows
\begin{align*}
\mu_+(w+t\bar{w})\otimes &(w+t\bar{w})-(\sigma+t\bar{\sigma})\\
&=\sum_{j=1}^n\lambda_je_j\otimes e_j+t(\mu_+e_n\otimes w+\mu_+w\otimes e_n-\bar{\sigma})+t^2\mu_+e_n\otimes e_n\\
&=\sum_{j=1}^{n-1}\lambda_je_j\otimes e_j + (\lambda_n+\alpha t+\mu_+t^2)e_n\otimes e_n.
\end{align*}
Clearly, $\lambda_j\leq e$, $j=1,\ldots,n-1$ and
$\lambda_n+\alpha t+\mu_+t^2\leq e$ for all $\abs{t}$ small enough, since the inequaltiy holds strict for $t=0$. 

The same reasoning applies also to the case $z\in K_-'\setminus K$.

Now let $z\in \overline{U}_0$. If $Q(z)<e$ or if $T_+(z)=T_-(z)$ we take the Muskat direction $\bar{z}=\tilde{z}(z)$. Because then
$
T_\pm(z+t\tilde{z}(z))=T_\pm(z)\leq e
$, $t\in(\mu_--\mu,\mu_+-\mu)$ by Lemma \ref{lem:properties_of_directions} (iii). Moreover, a straightforward computation shows
\begin{align*}
Q(z)&=\frac{1}{n}\tr M(z)+\lamax(M(z)^\circ)\\
&=\frac{\mu_+-\mu}{\mu_+-\mu_-}T_-(z)+\frac{\mu-\mu_-}{\mu_+-\mu_-}T_+(z)+\lamax(M(z)^\circ)
\end{align*}
and thus by Lemma \ref{lem:properties_of_directions} (iii) we have
\begin{align*}
Q(z+t\tilde{z}(z))=Q(z)+t\frac{T_+(z)-T_-(z)}{\mu_+-\mu_-}.
\end{align*}
For $\abs{t}\abs{T_+(z)-T_-(z)}\leq (e-Q(z))(\mu_+-\mu_-)$ and $\abs{t}< \dist(\mu,\set{\mu_-,\mu_+})$ we therefore conclude $z+t\tilde{z}(z)\in \overline{U}_0$.

From now on we consider the remaining case $Q(z)=e$ and $T_+(z)\neq T_-(z)$. Note that then necessarily $\lamin (M(z))<e$, because otherwise $e=\lamax (M(z))=\lamin (M(z))$ yields $M(z)^\circ=0$ and thus 
\[
e=Q(z)=\frac{\mu_+-\mu}{\mu_+-\mu_-}T_-(z)+\frac{\mu-\mu_-}{\mu_+-\mu_-}T_+(z).
\]
Since $T_+(z)\leq e$, $T_-(z)\leq e$ this equality can only hold if $T_+(z)=T_-(z)=e$, which is excluded in the case we are considering.

Let us assume $T_-(z)> T_+(z)$, the other case follows similarly. We consider Euler directions of the form $\bar{z}=(0,\bar{w},\mu_+\bar{w},\bar{\sigma},\bar{q})$, where $\bar{w}\in\R^n$ and $\bar{\sigma}\in  \mathcal S_0^{n\times n}$ will be chosen later and $\bar{q}=\bar{q}(\bar{w},\bar{\sigma})$ by Lemma \ref{lem:properties_of_directions} (i). These Euler directions allow us to preserve $T_-$ due to Lemma \ref{lem:properties_of_directions} (iv), i.e., $T_-(z+t\bar{z}_+)=T_-(z)\leq e$ for all $t\in\R$.

Now we need to guarantee that $Q(z+t\bar{z})=Q(z)=e$ for small enough $\abs{t}$ and some choice of $\bar{w}$, $\bar{\sigma}$. As in the cases $z\in K_\pm'\setminus K$ we can again assume that the matrix $M(z)$ is diagonal with entries $e=\lambda_1\geq \lambda_2\geq \ldots\geq \lambda_n$ and $\lambda_n<e$. As before we take $\bar{w}=e_n$, $\bar{m}=\mu_+e_n$ and 
the uniquely determined pair $(\bar{\sigma},\alpha)\in  \mathcal S_0^{n\times n}\times\R$ satisfying
\begin{align*}
M(z+t\bar{z})&=M(z)+\alpha t e_n\otimes e_n+\frac{\mu_+(\mu_+-\mu_-)}{\mu-\mu_-}t^2e_n\otimes e_n.
\end{align*}
For small enough $\abs{t}$ we therefore conclude that this Euler perturbation does not affect the maximal eigenvalue, i.e., $Q(z+t\bar{z})=Q(z)=e$ for $\abs{t}$ small. 

Furthermore, the last condition needed for $z+t\bar{z}\in\overline{U}$ simply follows by the continuity of $T_+$, i.e., for all $\abs{t}$ small enough there holds
\[
T_+(z+t\bar{z})< T_-(z)\leq e.
\]
\end{proof}
Now we have all ingredients for the proof of $K^\Lambda=K^{co}=\overline{U}$ at hand.
\begin{proof}[Proof of Proposition \ref{prop:lambda_convex_hull}]
Lemma \ref{lem:U_convex_closure_of_U} implies $K^\Lambda\subset K^{co}\subset \overline{U}$, while Lemma \ref{lem:boundedness_of_U_extreme_points} says that the $\Lambda$-extreme points of the up to the $q$-component compact set $\overline{U}$ are contained in $K$. The inclusion $\overline{U}\subset K^\Lambda$ follows by the Krein-Milman theorem for $\Lambda$-convex sets, cf. \cite{Kircheim}, Lemma 4.16.
\end{proof}
\subsection{Perturbing along sufficiently long enough segments}

In this subsection we prove that any element from $U$ is contained in a sufficiently long admissible line segment, similarly to Section 4.3 from \cite{DeL-Sz-Adm}. We recall the projection operator $\pi$ defined in \eqref{eq:projection}.
We have the following result.
\begin{lemma}\label{lem:segments}
For any $z\in U$ there exists $\bar{z}\in\Lambda$ such that we have
\begin{align*}
[z-\bar{z},z+\bar{z}]\subset U\ \text{and } |\pi(\bar{z})|\geq \frac{1}{2N} d(\pi(z),K/\sim),
\end{align*}
where $N=\dim(Z)$ and $d$ denotes the Euclidian distance on $\pi(Z)$.
\end{lemma}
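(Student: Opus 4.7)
The plan is to combine Proposition~\ref{prop:lambda_convex_hull} ($\overline{U} = K^{co}$), Carathéodory's theorem in the projected space $\pi(Z)$, and the wave-cone richness provided by Corollary~\ref{bigenoughcone}. Because $z \in U$ forces $\mu \in (\mu_-, \mu_+)$ while every element of $K$ has $\mu \in \{\mu_-, \mu_+\}$, the number $r := d(\pi(z), K/\sim) \geq \min(\mu - \mu_-, \mu_+ - \mu)$ is strictly positive. Since $\pi(z) \in \pi(U) \subset \pi(\overline{U}) = \pi(K)^{co}$ and $\dim \pi(Z) = N - 1$, Carathéodory yields a representation
\[
\pi(z) = \sum_{i=1}^M \lambda_i\pi(z_i),\qquad z_i \in K,\ \lambda_i > 0,\ \sum_i\lambda_i = 1,\ 2 \leq M \leq N,
\]
where $M \geq 2$ follows from $\pi(U) \cap \pi(K) = \emptyset$.

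Next I would choose $i_0$ attaining $\max_i \lambda_i$, so $\lambda_{i_0} \geq \lambda_k$ for all $k$. From $\pi(z) - \pi(z_{i_0}) = \sum_{k \neq i_0}\lambda_k(\pi(z_k) - \pi(z_{i_0}))$ and $|\pi(z) - \pi(z_{i_0})| \geq r$, pigeonhole produces an index $j \neq i_0$ with $\lambda_j|\pi(z_j) - \pi(z_{i_0})| \geq r/(M-1)$; by maximality, $\lambda_j \leq \lambda_{i_0}$. By Corollary~\ref{bigenoughcone} choose $c \in \R$ with $z_{i_0} - z_j + (0,0,0,0,c) \in \Lambda$ and define
\[
\bar{z} := \tfrac{\lambda_j}{2}\bigl(z_{i_0} - z_j + (0,0,0,0,c)\bigr) \in \Lambda.
\]
Since the pressure component lies in the kernel of $\pi$,
\[
|\pi(\bar{z})| = \tfrac{\lambda_j}{2}|\pi(z_{i_0}) - \pi(z_j)| \geq \frac{r}{2(M-1)} \geq \frac{r}{2N}.
\]

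For the inclusion $[z - \bar{z}, z + \bar{z}] \subset U$ I would exploit that $T_\pm$, $Q$ and the condition $\mu \in (\mu_-, \mu_+)$ depend only on $\pi(z)$, so $U = \pi^{-1}(\pi(U))$ and it is enough to show $\pi(z) + t\pi(\bar{z}) \in \pi(U)$ for every $t \in [-1, 1]$. A direct expansion gives
\[
\pi(z) + 2s\pi(\bar{z}) = (\lambda_{i_0} + s\lambda_j)\pi(z_{i_0}) + \lambda_j(1-s)\pi(z_j) + \sum_{k \neq i_0, j}\lambda_k\pi(z_k),
\]
which for every $s \in [-1, 1]$ is a genuine convex combination of elements of $\pi(K)$ (nonnegativity of the first coefficient uses $\lambda_{i_0} \geq \lambda_j$), hence lies in $\pi(K)^{co} = \pi(\overline{U}) \subset \overline{\pi(U)}$. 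Since $\pi(U)$ is open convex in $\pi(Z)$ (open because $\pi$ is an open linear surjection, convex by Lemma~\ref{lem:U_convex_closure_of_U}), the classical fact that the open segment from an interior point of a convex open set to a point of its closure remains inside the open set, applied to the identity $\pi(z) + t\pi(\bar{z}) = \tfrac{1}{2}\pi(z) + \tfrac{1}{2}(\pi(z) + 2t\pi(\bar{z}))$, places $\pi(z) + t\pi(\bar{z})$ in $\pi(U)$ for all $t \in [-1, 1]$. The main subtlety is really careful bookkeeping: selecting $i_0, j$ and the scaling $\lambda_j/2$ so that the segment simultaneously stays in the open set $\pi(U)$ and achieves the quantitative lower bound $r/(2N)$; the pressure freedom in Corollary~\ref{bigenoughcone} is exactly what is needed to lift the direction $\pi(z_{i_0}) - \pi(z_j)$ from $\pi(Z)$ back into $\Lambda$.
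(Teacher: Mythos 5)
Your proof is correct and follows the same overall skeleton as the paper's (Carath\'eodory representation, pigeonhole over the coefficients to extract a long direction, and Corollary~\ref{bigenoughcone} to adjust the pressure so the direction lies in~$\Lambda$), but you execute two steps in a slightly different and arguably cleaner way. First, you apply Carath\'eodory in the projected space $\pi(Z)$ (dimension $N-1$, hence at most $N$ points) rather than in $Z$ (dimension $N$, hence $N+1$ points); both give the constant $\tfrac{1}{2N}$, and yours in fact yields the marginally better $\tfrac{1}{2(M-1)}\ge\tfrac{1}{2(N-1)}$, though nothing is gained in the statement. Second, and more substantively, the paper verifies $z\pm\bar z\in\mathrm{int}\,K^{co}$ by asserting that $z$ lies in the interior of a (full-dimensional) simplex spanned by $K$ and checking the perturbed barycentric coefficients stay in $(0,1)$; this implicitly uses the non-degeneracy refinement of Carath\'eodory for interior points. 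You sidestep that by showing only that the endpoints $\pi(z)\pm 2\pi(\bar z)$ are convex combinations of $\pi(z_i)\in\pi(K)$, hence lie in $\pi(K)^{co}=\pi(\overline U)\subset\overline{\pi(U)}$, and then invoking the elementary fact that for an open convex set the open segment from an interior point to a closure point stays interior (via the midpoint identity). Combined with $U=\pi^{-1}(\pi(U))$, this gives the inclusion without appealing to simplex non-degeneracy. The pressure freedom from Corollary~\ref{bigenoughcone} is used exactly as in the paper. One small point worth making explicit: pigeonhole requires $\pi(z_j)\ne\pi(z_{i_0})$, which is automatic since the selected term is nonzero, and hence $z_j\ne z_{i_0}$ so the Corollary applies.
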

\begin{proof}
We proceed as in the proof of Lemma 4.7 from \cite{DeL-Sz-Adm}. Since $z\in U=\text{int}K^{co}$, it follows from Carath\'eodory's theorem that it lies in the interior of a simplex in $Z$ spanned by $K$, i.e. there exist $\lambda_i\in(0,1)$, $z_i\in K$, $i\in{1,\ldots,N+1}$, $\sum_i \lambda_i=1$,  such that
$$z=\sum_{i=1}^{N+1}\lambda_i z_i.$$
We may also assume that the coefficients are ordered such that $\lambda_1=\max_i{\lambda_i}$, then for any $j>0$ we have
$$z\pm \frac{1}{2}\lambda_j(z_j-z_1)\in \text{int}K^{co}.$$
Indeed, one may rewrite $z\pm \frac{1}{2}\lambda_j(z_j-z_1)=\sum_i \kappa_i z_i$, where $\kappa_1=\lambda_1\mp\frac{1}{2}\lambda_j$, $\kappa_j=\lambda_j\pm\frac{1}{2}\lambda_j$ and $\kappa_i=\lambda_i$ for $i\not\in\{1,j\}$, such that these coefficients are in $(0,1).$

Furthermore, since we have $z-z_1=\sum_{i=2}^{N+1}\lambda_i(z_i-z_1)$, it follows that
\begin{align}\label{optmw}
|\pi(z)-\pi(z_1)|\leq N \max_{i=2,\ldots,N+1} \lambda_i |\pi(z_i)-\pi(z_1)|.
\end{align}
Choose $j>0$ such that $\max_{i=2,\ldots,N+1} \lambda_i |\pi(z_i)-\pi(z_1)|=\lambda_j|\pi(z_j)-\pi(z_1)|$, and let 
$$\bar{z}=\frac{1}{2}\lambda_j(z_j-z_1).$$
Then $[z-\bar{z},z+\bar{z}]\subset\text{int} K^{co}$
and  
$$d(\pi(z),K/\sim)\leq|\pi(z)-\pi(z_1)|\leq 2N |\pi(\bar{z})|.$$ 

To conclude the proof of the lemma, it would suffice to have $\bar{z}\in\Lambda$.
While this in general may not be true a priori, we know from Corollary \ref{bigenoughcone} that it is true up to changing the pressure in $\bar{z}$. However, since the constraints in $K$, respectively the inequalities in $U$ do not involve the pressure, this can be done such that $z\pm\bar{z}\in\text{int} K^{co}$ still remains valid. This concludes the proof.
\end{proof}
\subsection{Continuity of constraints}

We now go back to the $(x,t)\in\mathscr{D}$ dependent sets of constraints $K_{(x,t)}$ defined in \eqref{eq:nonlinear_constraints_Kxt}.
We have the following result regarding the continuity of the nonlinear constraints in \eqref{eq:constraints}, given the continuity of the associated energy. This will allow us to have a set of subsolutions which is bounded in $L^2(\mathscr{D})$. 

\begin{lemma}\label{lem:cont_of_constr}
Let $\mathscr{U}\subset\mathscr{D}$ be an open, bounded set and $e:\Omega\times[0,T)\rightarrow\R_+$. If the map $(x,t)\mapsto e(x-1/2gt^2e_n,t)\in\R_+$ is continuous and bounded on $\mathscr{U}$, then it follows that the map $(x,t)\mapsto K_{(x,t)}/\sim$ is continuous and bounded on $\mathscr{U}$ with respect to the Hausdorff metric $d_{\mathcal{H}}$.
\end{lemma}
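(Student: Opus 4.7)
The plan is to reduce the lemma to the continuity of an explicit parametrization. First, I would note that modulo the equivalence $\sim$ (which only quotients out the pressure component $q$), an element of $K_{(x,t)}$ is completely determined by the pair $(\mu,w)$: the definition in \eqref{eq:nonlinear_constraints_Kxt} forces $\mu\in\{\mu_-,\mu_+\}$, $m=\mu w$ and $\sigma=\mu w\otimes w-\bar{e}(x,t)\id$, where I abbreviate $\bar{e}(x,t):=e(x-\tfrac{1}{2}gt^2 e_n,t)$. Since $\sigma$ must be traceless, this constrains $\mu|w|^2 = n\bar{e}(x,t)$, so the modulus $|w|$ is determined and only the direction is free. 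Hence
\[
K_{(x,t)}/\sim \; =\; K^+_{(x,t)} \cup K^-_{(x,t)},
\]
where each $K^\pm_{(x,t)}$ is the image of $S^{n-1}$ under the continuous map
\[
F^\pm_{\bar e}(\hat w) \;=\; \left(\mu_\pm,\; r_\pm\hat w,\; \mu_\pm r_\pm \hat w,\; \mu_\pm r_\pm^2\hat w\otimes\hat w - \bar e\,\id\right),\qquad r_\pm := \sqrt{n\bar e/\mu_\pm}.
\]

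Second, I would establish continuity of the map $\bar e \mapsto K^\pm(\bar e)$ from $\mathbb{R}_+$ to compact subsets of $\pi(Z)$ (with the Hausdorff metric). The key point is that $F^\pm_{\bar e}$ depends jointly continuously on $(\bar e,\hat w)\in\mathbb{R}_+\times S^{n-1}$, with the only nonsmooth dependence being through $\sqrt{\bar e}$, which is nonetheless uniformly continuous on bounded intervals of $\mathbb{R}_+$. Since $S^{n-1}$ is compact, given $\varepsilon>0$ and $\bar e_0$, there is $\delta>0$ such that $|\bar e-\bar e_0|<\delta$ implies $|F^\pm_{\bar e}(\hat w)-F^\pm_{\bar e_0}(\hat w)|<\varepsilon$ uniformly in $\hat w$, which yields $d_{\mathcal{H}}(K^\pm(\bar e),K^\pm(\bar e_0))<\varepsilon$. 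Taking the union of the two pieces preserves this continuity.

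Finally, I would compose with the hypothesis. By assumption, the map $(x,t)\mapsto \bar e(x,t)$ is continuous and bounded on $\mathscr{U}$, so composing with the continuous set-valued map $\bar e\mapsto K^+(\bar e)\cup K^-(\bar e)$ gives continuity of $(x,t)\mapsto K_{(x,t)}/\sim$ in the Hausdorff metric. The boundedness portion of the statement is then immediate from the explicit parametrization: if $\|\bar e\|_{L^\infty(\mathscr{U})} \leq M$, then $|w|\leq \sqrt{nM/\mu_-}$ on all of $K_{(x,t)}/\sim$ for $(x,t)\in\mathscr{U}$, and this in turn bounds $|m|$ and $|\sigma|$ in terms of $M$, $\mu_\pm$, $n$.

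I do not expect any serious obstacle here: the entire argument is essentially the observation that $K_{(x,t)}/\sim$ is a pair of explicitly parametrized spheres whose radii and ambient translation depend continuously on the single scalar $\bar e(x,t)$. The only minor technical point worth verifying carefully is the continuity of $\bar e \mapsto K^\pm(\bar e)$ near $\bar e=0$, where $r_\pm\to 0$ and the two components collapse — but this is covered by the uniform continuity of $\sqrt{\cdot}$ on $[0,M]$ and presents no real difficulty.
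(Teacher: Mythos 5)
Your proposal is correct and follows essentially the same argument as the paper: in both cases one observes that (modulo the pressure) $K_{(x,t)}/\sim$ is parametrized by $\mu\in\{\mu_-,\mu_+\}$ and a direction $b\in S^{n-1}$ with radius $\sqrt{n\bar e(x,t)/\mu}$, and then matches points with the same $(\mu,b)$ across nearby $(x,t)$ to bound the Hausdorff distance via the uniform continuity of $\sqrt{\cdot}$, with boundedness read off directly from the parametrization. The paper phrases the last step through an auxiliary lemma on the Hausdorff metric (Lemma~\ref{lem:haus}), whereas you use the equivalent observation that a family of continuous surjections from a fixed compact parameter space, converging uniformly, converges in Hausdorff distance; these are the same argument.
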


The proof of Lemma \ref{lem:cont_of_constr} is based on the following observation, which can be found in \cite{Crippa} as Lemma 3.1.
\begin{lemma}\label{lem:haus}
Suppose $A,B\subset\mathbb{R}^l$ for some $l\in\N$ are compact sets and $r>0$ such that
\begin{itemize}
\item for any $z\in A$ there exists $z'\in B\cap B_r(z),$
\item for any $z\in B$ there exists $z'\in A\cap B_r(z).$
\end{itemize}
Then $d_{\mathcal{H}}(A,B)< r$.
\end{lemma}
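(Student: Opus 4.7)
The plan is to use compactness to upgrade the pointwise strict inequality in the hypotheses to a uniform strict inequality, which directly yields the bound on the Hausdorff distance. Recall that by definition
\[
d_{\mathcal{H}}(A,B)=\max\left\{\sup_{z\in A}d(z,B),\ \sup_{z\in B}d(z,A)\right\},
\]
where $d(z,S):=\inf_{s\in S}\|z-s\|$. So it suffices to show that both of the two suprema are strictly less than $r$.

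First I would show that $\sup_{z\in A}d(z,B)<r$. Fix $z\in A$. By the first hypothesis, there is $z'\in B$ with $\|z-z'\|<r$, and therefore $d(z,B)\leq\|z-z'\|<r$. Since $B$ is compact and the map $b\mapsto\|z-b\|$ is continuous, in fact $d(z,B)$ is attained, so $d(z,B)<r$ for every $z\in A$. The key point is now to promote this pointwise strict inequality to a uniform one: the function $\varphi_B\colon z\mapsto d(z,B)$ is $1$-Lipschitz on $\mathbb{R}^l$ (a standard property of distance functions), hence continuous, and since $A$ is compact $\varphi_B$ attains its maximum at some $z^\ast\in A$. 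Therefore
\[
\sup_{z\in A}d(z,B)=\varphi_B(z^\ast)=d(z^\ast,B)<r.
\]

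By interchanging the roles of $A$ and $B$ and invoking the second hypothesis together with the compactness of $B$, the same argument yields $\sup_{z\in B}d(z,A)<r$. Combining the two bounds gives $d_{\mathcal{H}}(A,B)<r$, which is the desired conclusion.

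I do not expect any essential obstacle: the only subtle point is that the strict inequality in the hypothesis is pointwise and could in principle degenerate in the supremum, but the compactness of $A$ and $B$ together with continuity of the distance function prevents precisely this, by turning the suprema into maxima that are attained at concrete points where the strict inequality already holds.
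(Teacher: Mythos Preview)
Your argument is correct: the compactness of $A$ and $B$ together with the continuity of the distance-to-set function turns the pointwise strict inequality from the hypotheses into a maximum that is attained, and hence strictly less than $r$; this is exactly what is needed. The paper does not give its own proof of this lemma but simply refers to \cite{Crippa}, Lemma~3.1, so there is nothing further to compare.
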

\begin{proof}
See \cite{Crippa}.
\end{proof}

\begin{proof}[Proof of Lemma \ref{lem:cont_of_constr}]
Fix $y=(x,t)\in\mathscr{U}$. For $\varepsilon>0$ there exists $\delta>0$ such that 
\begin{align}\label{eq:squareroot}
\abs{e(y)-e(y')}<\varepsilon\quad\text{and}\quad\left|\left(n\frac{e(y)}{\mu_\pm}\right)^{1/2}-\left(n\frac{e(y')}{\mu_\pm}\right)^{1/2} \right|<\varepsilon,
\end{align} for any $y'\in B_\delta(y)\subset\mathscr{U}.$ Using Lemma \ref{lem:haus} we will prove $d_{\mathcal{H}}( K_y/\sim, K_{y'}/\sim)< c \varepsilon$ for any $y'\in B_\delta(y)\subset\mathscr{U}$, with $c>0$ depending only on $\mu_+$, $\mu_-$ and $n$.

Let $$z=(\mu,w,\mu w,\mu w\otimes w-e(y)\id,q)\in K_y,$$
with $\mu\in\{\mu_+,\mu_-\}$ and $\mu|w|^2=ne(y)$. It follows 
that
$$w=\left(\frac{n}{\mu}e(y)\right)^{1/2}b,$$
for some $b\in S^{n-1}$.

We define $$z'=(\mu,w',\mu w', \mu w'\otimes w'-e(y')\id,q)$$ by setting 
\begin{align*}
w'=\left(\frac{n}{\mu}e(y')\right)^{1/2}b.
\end{align*}
Note that $z'\in K_{y'}$.

Furthermore, from \eqref{eq:squareroot} it follows that
$$|w-w'|<\varepsilon,$$
from which one can conclude $\abs{z-z'}<c\varepsilon$ for some $c=c(\mu_\pm,n)>0$.

Due to the symmetry of this construction, one can similarly prove that for any $z'\in K_{y'}$ there exists $z\in K_y$ such that $|z-z'|<c\varepsilon.$ The result then follows from Lemma \ref{lem:haus}.

The boundedness of $\bigcup_{y\in\mathscr{U}}K_y$ follows from Lemma \ref{lem:boundedness_of_U_extreme_points} and the assumption that the function $e$ is bounded.
\end{proof}

\section{Proof of Theorem \ref{thm:main2}}\label{sec:conclusion}

In this section we conclude the proof of Theorem \ref{thm:main2} by using the Baire category method.

\subsection{The Baire category method}

We introduce the notion of subsolution associated with \eqref{eq:linear_system},\eqref{eq:nonlinear_constraints_Kxt}. For simplicity of notation, in this subsection we will, as in the proof of Lemma \ref{lem:cont_of_constr}, denote $y:=(x,t)$.
\begin{definition}\label{def:subsol}
We say that $z:\mathscr{D}\to Z$ is a subsolution of \eqref{eq:linear_system} associated with the set of constraints $K_y$, iff it is a weak solution of \eqref{eq:linear_system} in the sense of Definition \ref{def:translinweak}  in $\mathscr{D}$, $\pi(z)$ is continuous in $\mathscr{U}$, $z(y)\in K_y$  holds for a.e. $y\in\mathscr{D}\setminus\mathscr{U}$ and 
\begin{align}\label{eq:tartar_sub}
z(y)\in U_y=\text{int}K_y^{co}\text{ for any }y\in\mathscr{U}.
\end{align}
\end{definition}

We have the following convex integration result.
\begin{theorem}\label{thm:tartar}
Suppose that there exists a subsolution $z_0$ in the sense of Definition \ref{def:subsol}. Then there exist infinitely many weak solutions $z:\mathscr{D}\to Z$
of \eqref{eq:linear_system}  which coincide with $z_0$ a.e. in $\mathscr{D}\setminus\mathscr{U}$, satisfy $z(y)\in K_y$ a.e. in $\mathscr{D}$, and for every open ball $B\subset\mathscr{U}$ the solutions satisfy the mixing property \begin{align}\label{eq:mixing_property2}
\int_B\mu_+-\mu(x,t)\:d(x,t)\cdot\int_B\mu(x,t)-\mu_-\:d(x,t)>0.
\end{align} 
Furthermore, among these weak solutions there exists a sequence $\{z_k\}_{k\geq 1}$ such that $\pi(z_k)$ converges weakly to $\pi(z_0)$ in $L^2(\mathscr{U};\pi(Z))$.
\end{theorem}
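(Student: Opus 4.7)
The strategy is the Baire category version of the Tartar framework developed in \cite{DeL-Sz-Annals, DeL-Sz-Adm, Cordoba, Sz-Muskat}, adapted to the present $(x,t)$-dependent setting. I set up the space of \emph{strict} subsolutions
$$X_0 := \bigl\{z : \mathscr{D} \to Z \text{ weak solution of } \eqref{eq:linear_system},\ z = z_0 \text{ a.e.\ on } \mathscr{D}\setminus\mathscr{U},\ \pi(z)|_{\mathscr{U}} \in C(\mathscr{U}),\ z(y) \in U_y\ \forall y \in \mathscr{U}\bigr\}.$$
By Lemma \ref{lem:boundedness_of_U_extreme_points} the projection $\pi(z)$ is uniformly bounded on $X_0$ in terms of $\sup_{\mathscr{U}} e$, $\mu_\pm$, $n$, so $X_0$ sits inside a fixed ball of $L^\infty(\mathscr{U};\pi(Z))$. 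I equip $X_0$ with the metric induced by weak $L^2(\mathscr{U};\pi(Z))$ convergence (which is metrizable on this uniform-bound ball) and let $(X,d)$ denote its completion; elements of $X$ are identified with $L^2$-weak limits of sequences from $X_0$.

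The functional $I(z) := \int_{\mathscr{U}}|\pi(z)|^2\,dy$ is weakly lower semicontinuous, hence Baire class one on $(X,d)$, so its set of points of continuity $\mathcal R\subset X$ is a dense $G_\delta$, i.e.\ residual. The core claim is that every $z \in \mathcal R$ is in fact a genuine weak solution: $z(y)\in K_y$ for a.e.\ $y\in\mathscr{D}$. Argue by contradiction. If the set $A_\delta := \{y\in\mathscr{U}:\dist(\pi(z(y)),K_y/\sim)\geq\delta\}$ has positive measure for some $\delta>0$, pick a Lebesgue density point $y_0\in A_\delta$ at which $\pi(z)$ is continuous. Lemma \ref{lem:segments} together with Corollary \ref{bigenoughcone} yields $\bar z\in\Lambda$ with $|\pi(\bar z)|\geq\delta/(2N)$ and $[z(y_0)-\bar z,z(y_0)+\bar z]\subset U_{y_0}$; continuity of the constraint set $K_y$ (Lemma \ref{lem:cont_of_constr}) and of $\pi(z)$ propagate this inclusion to a small ball $B_\rho(y_0)\subset\mathscr{U}$. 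Rescaling the localized plane wave from Lemma \ref{lem:locpw} to be supported in $B_\rho(y_0)$, one obtains $w_N\in C_c^\infty(B_\rho(y_0);Z)$ solving \eqref{eq:linear_system}, with $d(w_N,[-\bar z,\bar z])\to 0$, $w_N\rightharpoonup 0$ in $L^2$, and $\|\pi(w_N)\|_{L^2}^2\geq c|\pi(\bar z)|^2\geq c'\delta^2$. By convexity of $U_y$ and the uniform smallness of the distance to $[-\bar z,\bar z]$, for $N$ large one has $z+w_N\in X_0$. The sequence $z+w_N$ converges weakly to $z$ in $L^2$, yet $I(z+w_N)\geq I(z)+c'\delta^2$, contradicting continuity of $I$ at $z$. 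Hence $\mathcal R$ consists of weak solutions, and since $\mathcal R$ is residual it is uncountable, giving infinitely many solutions.

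For the mixing property \eqref{eq:mixing_property2}, fix a countable basis $\{B_j\}_j$ of open balls in $\mathscr{U}$ and define the ``bad'' sets
$$\mathcal B_j := \Bigl\{z \in X :\ \int_{B_j}(\mu_+-\mu)\,dy\cdot\int_{B_j}(\mu-\mu_-)\,dy = 0\Bigr\}.$$
The same perturbation scheme, applied now with a Muskat direction (Lemma \ref{lem:properties_of_directions}) based inside $B_j$, moves any $z\in X_0$ to a nearby subsolution whose density takes both values $\mu_\pm$ on positive-measure subsets of $B_j$; hence $X_0\setminus\mathcal B_j$ is dense in $X_0$, and the set of $z\in X$ satisfying \eqref{eq:mixing_property2} on $B_j$ is residual in $X$. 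Intersecting over $j$ with $\mathcal R$ still gives a residual set of genuine solutions with the full mixing property on every ball $B\subset\mathscr{U}$. Finally, since $z_0\in X_0\subset X$, running the same Baire argument inside $X$-metric neighbourhoods of $z_0$ of shrinking radius produces a sequence $\{z_k\}\subset\mathcal R$ with $\pi(z_k)\rightharpoonup \pi(z_0)$ in $L^2(\mathscr{U};\pi(Z))$.

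The main obstacle lies in the perturbation step: the localized plane wave must simultaneously solve the linear system \eqref{eq:linear_system}, gain a quantitative $L^2$-amount proportional to $\delta^2$, preserve the pointwise constraint $z+w_N\in U_y$ (which is $(x,t)$-dependent and only open, not convex in an obvious pressure-free way), and be compatible with the free pressure component. Corollary \ref{bigenoughcone} handles the pressure freedom, Lemma \ref{lem:cont_of_constr} provides uniform continuity of $K_y$ on compact subsets of $\mathscr{U}$ so that the $\Lambda$-segment persists on a small ball, and Lemma \ref{lem:locpw}(iii) supplies the crucial $L^2$-gain. Assembling these ingredients cleanly, while also verifying the mixing-adjusted and weak-convergence claims, is where the work of this section lies; the remainder is standard Baire category.
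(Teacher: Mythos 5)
Your overall strategy coincides with the paper's (Baire category over the weak-$L^2$ closure of the space of subsolutions $X_0$, residuality of points of continuity of $I$, contradiction via a quantitative $\Lambda$-perturbation), and the supporting ingredients you invoke—Lemma \ref{lem:segments}, Corollary \ref{bigenoughcone}, Lemma \ref{lem:cont_of_constr}, Lemma \ref{lem:locpw}—are the right ones. However, the central contradiction step has a genuine gap.

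You take $z\in\mathcal R$ (a point of continuity of $I$ in the closure $X$), assume $A_\delta=\{y:\dist(\pi(z(y)),K_y/\sim)\geq\delta\}$ has positive measure, pick a density point $y_0$ \emph{at which $\pi(z)$ is continuous}, apply Lemma \ref{lem:segments} at $z(y_0)$, and add a localized plane wave $w_N$ to conclude $z+w_N\in X_0$. The problem is that $z$ is only a weak $L^2$ limit of subsolutions: there is no reason why $\pi(z)$ is continuous on $\mathscr U$, nor why $z(y_0)\in U_{y_0}$ (the best you know is $z(y)\in\overline{U_y}$ a.e., by convexity of $U_y$ and Mazur's lemma). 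Lemma \ref{lem:segments} requires $z(y_0)\in U_{y_0}$ to produce the long $\Lambda$-segment, and you also need strict containment and continuity to verify $z+w_N\in X_0$. None of this is available for a generic $z\in\mathcal R$. The repair—and what the paper actually does—is to perturb the \emph{approximating subsolutions} $z_j'\in X_0$ with $z_j'\to\pi(z)$ in the metric of $X$: continuity of $I$ at $\pi(z)$ upgrades this to strong $L^2$ convergence, which (since $J$ is strongly continuous) bounds $J(z_j')$ below by $\varepsilon/2$ for large $j$; then Lemma \ref{lem:tartarpert}, which is stated for subsolutions and uses a covering argument over all of $\mathscr U$ to produce a $\delta(\varepsilon)$ uniform in $j$, gives perturbed subsolutions $\tilde z_j$ with $\|\pi(z_j-\tilde z_j)\|_{L^2}^2\geq\delta$ yet $\pi(z_j-\tilde z_j)\rightharpoonup 0$, contradicting continuity of $I$ at $\pi(z)$. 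You should also note that a single local perturbation at one density point only gains an amount of $L^2$-mass proportional to the (small, $z$-dependent) radius $\rho$ of the ball around $y_0$, not an amount $\delta(\varepsilon)$ uniform over the approximating sequence; the covering argument in Lemma \ref{lem:tartarpert} is what produces the uniform gain.

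A smaller inaccuracy: for the mixing property you do not need a Muskat perturbation to force $\mu$ to take both values. For any $z\in X_0$ one has $\mu(y)\in(\mu_-,\mu_+)$ on all of $\mathscr U$, so both integrals in \eqref{eq:mixing_property2} are already strictly positive; the sets $X_B^{\mu_\pm}$ are therefore contained in $X\setminus X_0$, hence closed with empty interior (as $X_0$ is dense in $X$), and residuality follows by intersecting over a countable basis of balls. This is the argument the paper uses, borrowed from \cite{Castro_Cordoba_Faraco}.
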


%\todo{comment about initial data?}

%
The proof is similar to those in \cite{Crippa,Sz-Muskat}, the only main difference being that one has to carefully track the role of the projection $\pi$. However, since the existence of the pressure is implicit in Definition \ref{def:translinweak} due to the use of divergence-free test functions, this can be done without any serious difficulty.

The main building block of the proof is the following perturbation lemma.
\begin{lemma}\label{lem:tartarpert}
Suppose that there exists a subsolution $z$ such that
$$\int_\mathscr{U} d(\pi(z(y)),K_y/\sim)^2\, dy = \varepsilon>0.$$
Then there exist $\delta=\delta(\varepsilon)>0$ and a sequence of subsolutions $\{z_k\}_{k\geq 0}$ such that
\begin{itemize}
\item $z_k=z$ in $\mathscr{D}\setminus\mathscr{U}$, for any $k\geq 0$,
\item $\int_\mathscr{U} |\pi(z_k(y)-z(y))|^2\, dy \geq \delta,$ for any $k\geq 0$,
\item $\pi(z_k)\rightharpoonup \pi(z)$ in $L^2(\mathscr{U};\pi(Z))$ as $k\to+\infty$.
\end{itemize}
\end{lemma}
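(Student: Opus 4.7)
The plan is to follow the standard convex integration perturbation scheme (compare \cite{DeL-Sz-Adm,Sz-Muskat,Crippa}), using the three ingredients already in place: the pointwise $\Lambda$-segment lemma (Lemma \ref{lem:segments}), the localized plane-wave construction (Lemma \ref{lem:locpw}), and the continuity of the constraints (Lemma \ref{lem:cont_of_constr}). The target is to perturb $z$ by adding a highly oscillatory correction that is supported in $\mathscr{U}$, respects the linear system \eqref{eq:linear_system}, preserves the subsolution property pointwise, delivers the $L^2$ lower bound from $\varepsilon$, and is weakly null.

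First I would fix $y_0=(x_0,t_0)\in\mathscr{U}$ and apply Lemma \ref{lem:segments} to get $\bar{z}(y_0)\in\Lambda$ with $[z(y_0)-\bar{z}(y_0),z(y_0)+\bar{z}(y_0)]\subset U_{y_0}$ and $|\pi(\bar{z}(y_0))|\geq\frac{1}{2N}d(\pi(z(y_0)),K_{y_0}/\sim)$. Using the continuity of $\pi(z)$ on $\mathscr{U}$, the continuity of $y\mapsto K_y/\sim$ in Hausdorff metric, the openness of $U_y$, and the compactness from Lemma \ref{lem:boundedness_of_U_extreme_points}, one checks that there is a radius $r(y_0)>0$ so that on $B_{r(y_0)}(y_0)\subset\mathscr{U}$ the \emph{constant} direction $\bar z(y_0)$ keeps $[z(y)-\bar z(y_0),z(y)+\bar z(y_0)]$ in $U_y$ uniformly for $y\in B_{r(y_0)}(y_0)$, up to shrinking $\bar z(y_0)$ by a universal fraction. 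By a Vitali covering argument I then extract a countable family of pairwise disjoint balls $B_i=B_{r_i}(y_i)\subset\mathscr{U}$ exhausting $\mathscr{U}$ up to a set of measure zero, on each of which the above properties hold with a common constant direction $\bar z_i:=\bar z(y_i)\in\Lambda$.

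Next, on each $B_i$ I apply Lemma \ref{lem:locpw} to $\bar z_i$ after an affine rescaling $y\mapsto (y-y_i)/r_i$: this yields, for every $N$, a compactly supported solution $z_{N,i}\in C_c^\infty(B_i;Z)$ of the linear system \eqref{eq:linear_system} satisfying $d(z_{N,i}(y),[-\bar z_i,\bar z_i])\to 0$ uniformly on $B_i$, $z_{N,i}\rightharpoonup 0$ in $L^2(B_i)$, and
\[
\int_{B_i}|\pi(z_{N,i})|^2\,dy\;\geq\;C\,r_i^{n+1}\,|\pi(\bar z_i)|^2.
\]
Setting $z_k:=z+\sum_i z_{N_k,i}$ for some sequence $N_k\to\infty$ (truncating the sum to finitely many balls if necessary) produces the desired candidates. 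They solve the linear system in $\mathscr{D}$, agree with $z$ outside $\mathscr{U}$, and for $k$ large the uniform-closeness in Lemma \ref{lem:locpw}(i) together with the openness of $U_y$ yields $z_k(y)\in U_y$ for all $y\in\mathscr{U}$, so $z_k$ is again a subsolution in the sense of Definition \ref{def:subsol}. Weak $L^2$-convergence $\pi(z_k-z)\rightharpoonup 0$ follows ball-by-ball from Lemma \ref{lem:locpw}(ii) plus a standard truncation/diagonal argument since the $B_i$ are fixed.

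For the $L^2$ lower bound, by Lemma \ref{lem:segments} and the above construction
\[
\int_{\mathscr{U}}|\pi(z_k-z)|^2\,dy\;\geq\;C\sum_i r_i^{n+1}|\pi(\bar z_i)|^2\;\geq\;\tilde C\int_{\mathscr{U}}|\pi(\bar z(y))|^2\,dy\;\geq\;\frac{\tilde C}{4N^2}\int_{\mathscr{U}}d(\pi(z(y)),K_y/\sim)^2\,dy\;=\;\delta,
\]
with $\delta=\delta(\varepsilon)>0$; the middle inequality is justified by choosing the $r_i$ small enough that $\bar z(y)$ is close to $\bar z_i$ on $B_i$, which is legitimate because the pointwise selection $\bar z(\cdot)$ can be made lower semicontinuous along the Vitali cover. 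The main obstacle I anticipate is precisely this bookkeeping step: controlling the accumulated effect of the many local perturbations so that (a) they simultaneously remain within $U_y$ pointwise—which uses openness of $U_y$, uniform closeness from Lemma \ref{lem:locpw}(i) on the chosen balls, and possibly a finite truncation—and (b) the lower $L^2$ bound survives the covering losses. Everything else is routine: the weak no-flux boundary condition and the distributional pressure are handled automatically because each $z_{N,i}$ is compactly supported in $\mathscr{U}\subset\mathscr{D}$ and solves the linear system strongly.
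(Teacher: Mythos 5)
Your proposal follows essentially the same route as the paper: Lemma~\ref{lem:segments} for a long $\Lambda$-segment at each point, Lemma~\ref{lem:cont_of_constr} together with Lemma~\ref{lem:Crippa1} (which you invoke implicitly via ``compactness'') to propagate the segment to a whole ball with a fixed tolerance, Lemma~\ref{lem:locpw} to produce localized oscillations on each ball, and a Vitali covering plus finite truncation to assemble the perturbation; the paper stops earlier and defers this covering step to Step 2 of Lemma 2.4 in \cite{Crippa}, which is exactly what you have written out.

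One place where your argument should be tightened: for the chain of inequalities giving the $L^2$ lower bound you invoke ``lower semicontinuity of the pointwise selection $\bar z(\cdot)$,'' but the Carath\'eodory-type selection in Lemma~\ref{lem:segments} is not canonical and has no regularity to lean on. The cleaner route, and the one implicitly taken via \cite{Crippa}, avoids comparing $\bar z(y)$ to $\bar z_i$ entirely. Instead, once $r(y_i)$ is chosen small enough so that (using Lemma~\ref{lem:cont_of_constr} and the continuity of $\pi(z)$ on $\mathscr U$)
\[
d\bigl(\pi(z(y')),K_{y'}/\!\sim\bigr)\;\le\;2\,d\bigl(\pi(z(y_i)),K_{y_i}/\!\sim\bigr)\quad\text{for all }y'\in B_{r(y_i)}(y_i),
\]
one gets directly
\[
|B_i|\,|\pi(\bar z_i)|^2\;\gtrsim\;|B_i|\,d\bigl(\pi(z(y_i)),K_{y_i}/\!\sim\bigr)^2\;\gtrsim\;\int_{B_i}d\bigl(\pi(z(y')),K_{y'}/\!\sim\bigr)^2\,dy',
\]
and summing over a finite subfamily of the Vitali cover capturing, say, half the mass of the integral $\varepsilon$ yields $\delta=\delta(\varepsilon)$ without any continuity assumption on the selection $\bar z(\cdot)$. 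The rest of your construction (finite truncation, supports in disjoint balls, uniform closeness from Lemma~\ref{lem:locpw}(i) giving the pointwise membership $z_k(y)\in U_y$, and weak convergence ball-by-ball) is sound.
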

To prove Lemma \ref{lem:tartarpert}, we will use the following result which can be found together with its proof as Lemma 2.1 in \cite{Crippa}.
\begin{lemma}\label{lem:Crippa1}
Let $K\subset\R^n$
be a compact set. Then for any compact set $C\subset\text{int}K^{co}$ there exists
$\varepsilon>0$ such that for any compact set $K'\subset\R^n$ with $d_\mathcal{H}(K,K')<\varepsilon$ we have
$C\subset\text{int}(K')^{co}$.
\end{lemma}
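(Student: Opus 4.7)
The statement is topological in nature: we want a quantitative version of the fact that the interior of a convex hull is ``stable'' under small Hausdorff perturbations of the generating set. The plan is to argue by contradiction, using the separating/supporting hyperplane theorem to convert the failure of the conclusion into a linear functional that separates $c$ from $K^{co}$, contradicting the assumption $c\in\text{int}K^{co}$.

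First I would extract the main geometric input from the hypotheses. Since $C$ is compact and contained in the open set $\text{int}K^{co}$, there exists $r>0$ such that $C+\overline{B}_r(0)\subset K^{co}$. In particular, for every $c\in C$ and every unit vector $\nu\in S^{n-1}$, the point $c+\tfrac{r}{2}\nu$ lies in $K^{co}$, which gives a uniform ``strict interiority'' that will be contradicted at the end.

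Next I would set up the contradiction. Assume no $\varepsilon>0$ works; then there exist compact sets $K_j\subset\R^n$ with $d_{\mathcal{H}}(K,K_j)\to 0$ and points $c_j\in C$ with $c_j\notin\text{int}(K_j)^{co}$. Since $C$ is compact, after passing to a subsequence, $c_j\to c\in C$. For each $j$, by the supporting hyperplane theorem (if $c_j\in\partial(K_j)^{co}$) or the separation theorem (if $c_j\notin (K_j)^{co}$), there exists a unit vector $\nu_j\in S^{n-1}$ with
\[
\nu_j\cdot (x-c_j)\leq 0\quad\text{for all }x\in K_j.
\]
After a further subsequence, $\nu_j\to\nu$ for some $\nu\in S^{n-1}$.

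Finally, I would pass to the limit to derive a contradiction. For any $k\in K$, the Hausdorff convergence $d_{\mathcal{H}}(K,K_j)\to 0$ yields $k_j\in K_j$ with $k_j\to k$; applying the inequality above to $k_j$ and letting $j\to\infty$ gives $\nu\cdot(k-c)\leq 0$. By linearity and continuity, this extends to all $k\in K^{co}$. However, the point $k_*:=c+\tfrac{r}{2}\nu$ lies in $K^{co}$ by the first paragraph, and $\nu\cdot(k_*-c)=\tfrac{r}{2}>0$, a contradiction. The only non-routine point is checking that the separation inequality is available in both the boundary and exterior cases for $c_j$ with a uniform unit normal, which is handled cleanly by the standard supporting/separating hyperplane statements; once that is in place, the Hausdorff convergence supplies the approximation $k_j\to k$ needed to pass to the limit.
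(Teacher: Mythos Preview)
Your proof is correct. The paper does not actually prove this lemma itself; it merely states that the result ``can be found together with its proof as Lemma 2.1 in \cite{Crippa}'', so there is no in-paper argument to compare against. Your contradiction argument via supporting/separating hyperplanes and Hausdorff convergence is a clean, self-contained proof of the statement.
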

\begin{proof}[Proof of Lemma \ref{lem:tartarpert}]
Fix $y\in\mathscr{U}$. 
From Lemma \ref{lem:segments} it follows that there exists some $C'>0$ independent of $y$ and $z$, and some
$\bar{z}(y)\in\Lambda$ such that
\begin{align*}
[z(y)-\bar{z}(y),z(y)+\bar{z}(y)]\subset U_y,\quad
|\pi(\bar{z}(y))|\geq C' d(\pi(z(y)),K_y/\sim).
\end{align*}
Now Lemma \ref{lem:cont_of_constr}, the continuity of $\pi(z)$ and Lemma \ref{lem:Crippa1} applied to the projected sets imply
%Furthermore, using Lemma \ref{lem:Crippa1}, (H3) and the continuity of $z$, it follows 
that there exist $r(y),R(y)>0$ such that\begin{align*}
[z(y')-\bar{z}(y),z(y')+\bar{z}(y)]+\overline{B_{R(y)}(0)}\subset U_{y'},\\
d(\pi(z(y')),K_{y'}/\sim) \leq 2 d(\pi(z(y)),K_y/\sim),\end{align*}
for any $y'\in B_{r(y)}(y).$

Using Lemma \ref{lem:locpw}, we find a sequence $\{z_{y,N}\}_{N\geq 0}\subset C^\infty_c(B_1(0))$ solving \eqref{eq:linear_system} such that
\begin{itemize}
\item $z_{y,N}(y')\in[-\bar{z}(y),\bar{z}(y)]+\overline{B_{R(y)}(0)}$ for all $y'\in B_1(0)$, $N\geq 0,$
\item $z_{y,N}\rightharpoonup 0$ in $L^2$,
\item $\int |\pi(z_{y,N})|^2 \, d\tilde{y}\geq C|\pi(\bar{z}(y))|^2$ for all $N\geq 0.$
\end{itemize}
From here on the proof is the same as Step 2 of the proof of Lemma 2.4 from \cite{Crippa},
using a standard covering argument, therefore the details are left to the reader.
\end{proof}
\begin{proof}[Proof of Theorem \ref{thm:tartar}]
Let 
\begin{align*}
X_0=\left\{z'\in L^2(\mathscr{D};\pi(Z))\text{ such that }z'=\pi(z)\text{ for some  subsolution } z \text{ in the sense}\right.\\ \left.\text{of Definition \ref{def:subsol} satisfying }z=z_0\text{ on }\mathscr{D}\setminus\mathscr{U}\right\},
\end{align*}
and $X$ denote the closure of $X_0$ with respect to the weak $L^2$ topology. From Lemma \ref{lem:cont_of_constr} it follows that $X_0$ is bounded, therefore $X$ is metrizable, denote its metric by $d_X(\cdot,\cdot)$.
Also since the existence of the pressure is implicit in Definition \ref{def:translinweak} due to the use of divergence-free test functions, it follows that for any $z'\in X$ there exists a possibly distributional pressure $q'$ such that $(z',q')$ is indeed a weak solution of \eqref{eq:linear_system}.

We observe that the functional $I(z')=\int_\mathscr{U} |z'|^2\, dy$ is a Baire-1 function on $X$. Indeed, setting
$$
I_j(z')=\int_{\set{y\in \mathscr{U}: d(y,\partial\mathscr{U})>1/j}} |(z'*\chi_j)(y)|^2\ dy,
$$
where $\chi_j\in C_c^\infty(B_{1/j}(0))$ is the standard mollifying sequence, one obtains that $I_j$ is continuous on $X$ and that $I_j(z')\to I(z')$ as $j\to+\infty.$

It follows from the Baire category theorem that the set 
$$Y=\{z'\in X:\ I\text{ is continuous at }z'\}$$
is residual in $X$. We claim that for any $z'\in Y$ it follows that $$J(z'):=\int_\mathscr{U} d(z'(y),K_y/\sim)^2\, dy=0.$$

Suppose the contrary, then $J(z')=\varepsilon>0$ for some $z'\in Y$, and let $z'_j\in X_0$ be a sequence which converges to $z'$ w.r.t. $d_X$. Since $I$ is continuous at $z'$, it follows that $z'_j\to z'$ strongly in $L^2(\mathscr{U};\pi(Z))$. Note that $J$ is continuous with respect to the strong $L^2$-topology. Therefore we may assume that $J(z'_j)>\varepsilon/2$ for all $z'_j$.

Since $z'_j\in X_0$, there exists some $z_j:\mathscr{D}\to Z$ which is a subsolution in the sense of Definition \ref{def:subsol} and such that $z'_j=\pi(z_j).$ We may then apply Lemma \ref{lem:tartarpert} to deduce that there exists $\delta=\delta(\varepsilon)>0$ and a subsolution $\tilde{z}_j$ such that $\int_\mathscr{U} |\pi(z_j(y)-\tilde{z}_j(y))|^2\, dy \geq \delta$ and $\pi(z_j-\tilde{z}_j)\rightharpoonup 0$ weakly in $L^2$. Since $z'_j=\pi(z_j)\to z'$ and $z'\in Y$, we conclude as before $\pi(\tilde{z}_j)\to z'$ strongly in $L^2$  contradicting the fact that $\pi(\tilde{z}_j)$ and $z_j'$ are uniformly bounded away from each other.
We thus have showed that the set of solutions $J^{-1}(0)$ is residual in $X$.

The proof of the mixing property \eqref{eq:mixing_property2} follows by another application of the Baire category theorem and is exactly the same as in \cite{Castro_Cordoba_Faraco}. For convenience we briefly present it here as well. Let $B$ be an open ball contained in $\mathscr{U}$. The set
\[
X_B^{\mu_\pm}=\set{z'\in X:\int_B\mu_\pm-\mu(x,t)\:d(x,t)=0}
\]
is closed in $X$ and has empty interior, since $X_B^{\mu_\pm}\subset X\setminus X_0$. Therefore $J^{-1}(0)\setminus X_B^{\mu_\pm}$ is residual in $X$, as is $J^{-1}(0)\setminus\left(\bigcup_i\big(X_{B_i}^{\mu_+}\cup X_{B_i}^{\mu_-}\big)\right)$ for any countable union of balls $B_i\subset\mathscr{U}$. By taking all balls $(B_i)_{i\in\N}\subset\mathscr{U}$ with rational centers and radii we can conclude the statement. 
\end{proof}

\subsection{Conclusion}
In order to prove our convex integration result for \eqref{eq:main} we apply a transformation similar to \eqref{eq:transf} to the differential inclusion \eqref{eq:linear_system},\eqref{eq:nonlinear_constraints_Kxt} and in particular also its relaxation. Recall from Section \ref{sec:reformulation_as_differential_inclusion} that for a bounded domain $\Omega\subset\R^n$ and $T>0$ we defined $\mathscr{D}=\set{(y,t)\in\R^n\times(0,T):y-\frac{1}{2}gt^2e_n\in\Omega}$.

Now let $z=(\mu,w,m,\sigma,q)$ be a weak solution of \eqref{eq:linear_system} with some suitable initial data. Defining again $y:=x+\frac{1}{2}gt^2e_n$, as well as
\begin{align}\label{eq:transflin}
\begin{split}
\rho(x,t)&=\mu\left(y,t\right),\\
v(x,t)&=w\left(y,t\right)-g t e_n,\\
u(x,t)&=m\left(y,t\right)-\mu\left(y,t\right)g t e_n,\\
P(x,t)&=q\left(y,t\right)+gt\frac{1}{n}\left(gt\mu(y,t)-2m_n\left(y,t\right)\right),\\
S(x,t)&=\sigma\left(y,t\right)-gt
\left(m\left(y,t\right)\otimes e_n+e_n\otimes m\left(y,t\right) \right)\\&
\phantom{asdasdasd}+g^2t^2\mu(y,t)e_n\otimes e_n
-\left(gt\frac{1}{n}\left(gt\mu(y,t)-2m_n\left(y,t\right)\right)\right)\id,
\end{split}
\end{align}
one obtains through lenghty but straightforward calculations that $(\rho,v,u,S,P)$ is a weak solution of \eqref{eq:linear_system_retransformed}
with the same initial data. Also here the transformation can be inverted in an obvious way, mapping a solution of \eqref{eq:linear_system_retransformed} to a solution of \eqref{eq:linear_system}. 

Furthermore, for a given function $e:\Omega\times[0,T)\rightarrow\R_+$ the condition $z(y,t)\in K_{(y,t)}$ for $y=x+\frac{1}{2}gt^2$, $(x,t)\in\Omega\times(0,T)$ and with $K_{(y,t)}$ defined in \eqref{eq:nonlinear_constraints_Kxt} 
translates to 
$(\rho,v,u,S,P)(x,t)\in \cK_{(x,t)}$ with $\cK_{(x,t)}$ defined in \eqref{eq:definition_of_nonlinear_constraints}. Similarly if we define $U_{(y,t)}$ to be the interior of the convex hull of $K_{(y,t)}$ then by Proposition \ref{prop:lambda_convex_hull} the condition $z(y,t)\in U_{(y,t)}$ translates to $(\rho,v,u,S,P)(x,t)\in\cU_{(x,t)}$ with $\cU_{(x,t)}$ defined in \eqref{eq:conditions_for_being_in_tx_dependent_hull2},\eqref{eq:conditions_for_being_in_tx_dependent_hull}. Since the transformation is an affine bijection, we also see that $\cU_{(x,t)}$ is the interior of the convex hull of $\cK_{(x,t)}$.

We have now all pieces together to prove our main result.

\begin{proof}[Proof of Theorem \ref{thm:main2}] 
Let $z=(\rho,v,u,S,P):\Omega\times (0,T)\to Z$  be a subsolution (in the sense of Definition \ref{def:subsolEuler}) of \eqref{eq:main} associated with $e:\Omega\times [0,T)\rightarrow\R_+$ bounded and initial data $(\rho_0,v_0)\in L^\infty(\Omega)\times L^2(\Omega;\R^n)$ satisfying \eqref{eq:id_comp}. We 
%set as before $$\mathscr{D}=\set{(x,t)\in\R^n\times(0,T):x-\frac{1}{2}gt^2e_n\in\Omega}$$ and define also 
also define the transformed mixing zone 
\[
\mathscr{U}'=\set{(y,t)\in\R^n\times (0,T):\left(y-\frac{1}{2}gt^2e_n,t\right)\in\mathscr{U}}.
\]
The inverse of the transformation \eqref{eq:transflin} applied to $z$ gives us a weak solution of \eqref{eq:linear_system} (in the sense of Definition \ref{def:translinweak}) which we call $z'=(\mu,w,m,\sigma,q):\mathscr{D}\to Z$. 
By the discussion of this section and Definition \ref{def:subsolEuler}, $\pi(z')$ is continuous on $\mathscr{U}'$, $z'(y,t)\in U_{(y,t)}=\text{int} K_{(y,t)}^{co}$ for all $(y,t)\in \mathscr{U}'$ and $z'(y,t)\in K_{(y,t)}$ for a.e. $(y,t)\in\mathscr{D}\setminus \mathscr{U}'$. 

In other words $z'$ is a subsolution of the differential inclusion \eqref{eq:linear_system}, \eqref{eq:nonlinear_constraints_Kxt} in the sense of Definition \ref{def:subsol} (with mixing zone $\mathscr{U}'$).  Theorem \ref{thm:tartar} therefore provides us with infinitely many solutions of our differential inclusion \eqref{eq:linear_system}, \eqref{eq:nonlinear_constraints_Kxt} which outside of $\mathscr{U}'$ agree with $z'$ and inside $\mathscr{U}'$ satisfy the mixing property \eqref{eq:mixing_property2}, 
as well as with a sequence of solutions such that $\pi(z'_k)$ converges $L^2$-weakly to $\pi(z')$. 

One may then transfer these conclusions to the setting of Theorem \ref{thm:main2} via Lemma \ref{lem:equivalence_between_pde_and_differential_inclusion}.

Let us now briefly explain how to establish the admissibility of the obtained solutions, provided that $\pi(z)$ is in addition of class $\cC^0([0,T];L^2(\Omega;\pi(Z)))$. As before let $z'$ be the corresponding transformed subsolution defined on $\mathscr{D}$.
Due to an improvement of the Tartar framework as in \cite{Castro_Faraco_Mengual,DeL-Sz-Adm} one can show that the induced sequence $\{\pi(z'_k)\}_{k\in\N}$ not only converges weakly in $L^2(\mathscr{D})$ to $\pi(z')$, but weakly on every time-slice $\mathscr{D}(t)$ uniformly in $t\in[0,T]$. It is in fact straightforward but quite lengthy to adapt the proof from \cite{DeL-Sz-Adm} to our situation, therefore we omit the details, cf. also \cite{Castro_Faraco_Mengual} and in particular Remark 2.3 therein. Transforming $z_k'$ again to $z_k$ we conclude that the associated energies
\begin{align*}
E_k(x,t):=\frac{n}{2} e(x,t)- gt e_n\cdot u_k(x,t)-\frac{1}{2}\rho_k (x,t)g^2t^2+\rho_k(x,t) gx_n
\end{align*}
 satisfy
\[
\int_\Omega E_k(x,t)\:dx\rightarrow \int_\Omega E_{sub}(x,t)\:dx
\]
uniformly in $t\in[0,T]$ as $k\rightarrow\infty$,
recall the definitions \eqref{eq:relation_between_E_and_e}, \eqref{eq:subsolenergy}.

However this does not yet allow us to conclude the admissibility of the induced solutions, since the difference 
\[
\varepsilon(t):=\int_\Omega E_{sub}(x,0)-E_{sub}(x,t)\:dx>0,\quad t\in(0,T)
\] 
goes to $0$ as $t\searrow 0$. Nonetheless, similarly to \cite[Definition 2.4]{Castro_Faraco_Mengual} (but a lot less technical for our purposes) we can extend  the definiton of the space $X_0$, such that the sequence (or any solution obtained by the convex integration scheme) satisfies
\begin{align*}
\left| \int_\Omega  gt e_n\cdot (u(x,t)-u_k(x,t))+\left(\frac{1}{2}g^2t^2- gx_n\right)(\rho(x,t)-\rho_k (x,t))\, dx \right|  \leq \varepsilon(t),
\end{align*}
for all $t\in[0,T],\ k\geq 0$. The statement follows.
\end{proof}

\section{Subsolutions}\label{sec:subsolutions}

We now turn to the construction of Rayleigh-Taylor subsolutions. We start by observing that the relaxation inside the mixing zone $\mathscr{U}\subset \Omega\times(0,T)$ given in Definition \ref{def:subsolEuler} 
can be equivalently rewritten (in the spirit of \cite{Castro_Cordoba_Faraco}) as the system
\begin{align}\label{eq:transfss}
\begin{split}
\partial_t (\rho v+f)+\text{div } S + \nabla P &= - \rho g e_n,\\
\text{div } v&=0,\\
\partial_t \rho + \text{div }(\rho v+f)&=0,
\end{split}
\end{align}
where 
\begin{align*}
f:=&\frac{\rho_+-\rho}{\rho_+-\rho_-}\sqrt{\frac{ne}{\rho_+}}(\rho-\rho_-)\xi+\frac{\rho-\rho_-}{\rho_+-\rho_-}\sqrt{\frac{ne}{\rho_-}}(\rho_+-\rho)\eta,
\end{align*}
for some functions $\xi,\eta:\mathscr{U}\to\R^n$
satisfying
\begin{align}
\sqrt{ne}\left(\frac{\rho-\rho_-}{\sqrt{\rho_+}}\xi-\frac{\rho_+-\rho}{\sqrt{\rho_-}}\eta\right)=(\rho_+-\rho_-)(v+gte_n),\quad |\xi|< 1,\quad |\eta|< 1\label{eq:euler_reynolds_condition1}
\end{align}
in $\mathscr{U}$.
The condition on $\lambda_{\max}(A(z))$ from \eqref{eq:conditions_for_being_in_tx_dependent_hull} with $u$ replaced by $\rho v+f$ is kept in accordance with Definition \ref{def:subsolEuler}.

Indeed, in order to see this, given a subsolution $z=(\rho,v,u,S,P)$ it suffices to set 
\begin{align}\label{eq:setavg}
\xi:=\sqrt{\frac{\rho_+}{ne}}\frac{u-\rho_-v+(\rho-\rho_-)gte_n}{\rho-\rho_-},\quad
\eta:=\sqrt{\frac{\rho_-}{ne}}\frac{u-\rho_+v+(\rho-\rho_+)gte_n}{\rho_+-\rho}.
\end{align}
Conversely, given $f$, it suffices to set $u:=\rho v+f$ to obtain a subsolution in the sense of Definition \ref{def:subsolEuler}.

%In the same style one could also rewrite inequality \eqref{eq:conditions_for_being_in_tx_dependent_hull} and the first equation of \eqref{eq:transfss} as an evolution equation of Euler-Reynolds type, but since this is neither relevant to the construction of our subsolutions nor particularly elegant, we omit the reformulation at this point. \todo{leave or cut last comment}

\begin{proof}[Proof of Theorem \ref{thm:rayleigh_taylor_subsolutions}] Now let $n=2$, $T>0$ and $\Omega\subset \R^2$ be the rectangle stated in the Theorem. In view of the equivalent reformulation above our goal is to find a suitable combination of functions $\xi,\eta$ and $e$, such that \eqref{eq:transfss} has a solution satisfying the energy inequality \eqref{eq:weakadm} in a strict sense.

In fact we will look for one-dimensional solutions of \eqref{eq:transfss}, i.e. a subsolution $z$ in the sense of Definition \ref{def:subsolEuler}, which is independent of $x_1$ and satisfies $u=u_2e_2$, $\xi=\xi_2e_2$, $\eta=\eta_2e_2$ respectively. We further assume $v\equiv 0$.

If we have choosen $\xi$, $\eta$, then condition \eqref{eq:euler_reynolds_condition1} implies that $e$ in the mixing zone is determined by
\begin{align}\label{eq:subsolen}
\sqrt{2e}=\frac{\sqrt{\rho_-\rho_+}(\rho_+-\rho_-)gt}{\sqrt{\rho_-}(\rho-\rho_-)\xi_2-\sqrt{\rho_+}(\rho_+-\rho)\eta_2}.
\end{align}
Note also that under condition \eqref{eq:euler_reynolds_condition1} the denominator will always be positive for $t>0$. Outside the mixing zone we will have $e=\frac{1}{2}\rho g^2t^2$ in accordance with \eqref{eq:relation_between_E_and_e}.

The last equation in \eqref{eq:transfss} then becomes
\begin{align}
\partial_t \rho+ gt \partial_{x_2}\left(\frac{(\rho_+-\rho)(\rho-\rho_-)(\sqrt{\rho_-}\xi_2+\sqrt{\rho_+}\eta_2)}{(\rho-\rho_-)\sqrt{\rho_-}\xi_2-(\rho_+-\rho)\sqrt{\rho_+}\eta_2} \right)=0.
\end{align} 
Using the change of coordinates $\rho(x,t)=y(x,gt^2/2)$ and interpreting the $\xi_2,\eta_2$ as functions of $\rho$ only, one obtains equivalently
\begin{align}\label{eq:entr}
\partial_t y+ \partial_{x_2}G(y)=0,
\end{align} 
with
$$G(y)=\frac{(\rho_+-y)(y-\rho_-)(\sqrt{\rho_-}\xi_2(y)+\sqrt{\rho_+}\eta_2(y))}{(y-\rho_-)\sqrt{\rho_-}\xi_2(y)-(\rho_+-y)\sqrt{\rho_+}\eta_2(y)}.$$

Now if $G:[\rho_-,\rho_+]\to\R$ is uniformly strictly convex, then one may consider the unique entropy solution (cf. Section 3.4.4 in \cite{evans}) of \eqref{eq:entr} with Rayleigh-Taylor initial data $\rho_0$ to obtain that 
\begin{align}\label{eq:entropy_solution}
\begin{split}
\rho(x_2,t)= \left\{
\begin{array}{lll}
 \rho_-,\text{ when }x_2\leq\frac{1}{2}gt^2 G'(\rho_-),\\
 (G')^{-1}\left(\frac{2x_2}{gt^2}\right),\text{ when }x_2\in\left(\frac{1}{2}gt^2 G'(\rho_-),\frac{1}{2}gt^2 G'(\rho_+)\right),\\
 \rho_+,\text{ when }x_2\geq\frac{1}{2}gt^2 G'(\rho_+).
\end{array} 
\right.
\end{split}
\end{align}
Observe that this already implies that the height of the mixing zone grows (up to a constant) like $\frac{1}{2}gt^2$, more precisely we will have 
\begin{align}\label{eq:entreqq}
\mathscr{U}=\set{(x,t)\in\Omega\times(0,T):\frac{1}{2}gt^2G'(\rho_-)<x_2<\frac{1}{2}gt^2 G'(\rho_+)}.
\end{align}

It is easy to check that if one is able to choose $\xi_2,\eta_2\in(-1,1)$ such that $G$ is indeed uniformly strictly convex and the above entropy solution exists, then defining 
\begin{gather*}
u_2(x_2,t):=gt G(\rho(x_2,t)),\quad
S:=\frac{(\rho_++\rho_--\rho)u_2^2}{2(\rho_+-\rho)(\rho-\rho_-)}\begin{pmatrix}
-1 & 0\\
0 & 1
\end{pmatrix},\\
P(x_2,t):=S_1(x_2,t)-\int_{\frac{1}{2}gt^2 G'(\rho_-)}^{x_2}\partial_t u_2(x',t)-\rho(x',t)g\:dx_2,
\end{gather*}
with $u_2$ and $S$ extended by $0$ outside $\mathscr{U}$,
one truly obtains a subsolution in the sense of Definition \ref{def:subsolEuler}. Indeed the relaxed momentum equation holds by definition of $P$ and inequality \eqref{eq:conditions_for_being_in_tx_dependent_hull} reduces to 
\begin{align*}
e&>\frac{(\rho_++\rho_--\rho)u_2^2}{2(\rho_+-\rho)(\rho-\rho_-)}+gt u_2+\frac{1}{2}\rho g^2t^2\\
&=\frac{\rho_+-\rho}{\rho_+-\rho_-}\frac{\rho_-}{2}\left(\frac{u_2}{\rho-\rho_+}+gt\right)^2+\frac{\rho-\rho_-}{\rho_+-\rho_-}\frac{\rho_+}{2}\left(\frac{u_2}{\rho-\rho_-}+gt\right)^2,
\end{align*}
which holds, since by our reformulation inequalities \eqref{eq:conditions_for_being_in_tx_dependent_hull2} are automatically satisfied for $\xi_2,\eta_2\in(-1,1)$ and $e$ defined in \eqref{eq:subsolen}.

Therefore, all that remains to do in order to finish the construction of RT-subsolutions is to find $\xi_2,\eta_2:(\rho_-,\rho_+)\rightarrow(-1,1)$ such that $G$ is uniformly strictly convex and to assure the admissibility \eqref{eq:weakadm} (in a strict sense for $t>0$) of the associated total energy \eqref{eq:subsolenergy}.

Denoting
$$Q(\rho):=(\rho-\rho_-)\sqrt{\rho_-}\xi_2(\rho)-(\rho_+-\rho)\sqrt{\rho_+}\eta_2(\rho)>0,$$
one has $e(x_2,t)=g^2t^2\tilde{e}(\rho(x_2,t))$ with $$\tilde{e}(\rho):=\frac{1}{2}\frac{\rho_+\rho_-(\rho_+-\rho_-)^2}{Q(\rho)^2}.$$
By the transformation $x_2=\frac{1}{2}gt^2G'(\rho)$ the desired admissibility \eqref{eq:weakadm} in the strict sense is then equivalent to
\begin{equation}\label{eq:equivalent_subsol_admissibility}
\int_{\rho_-}^{\rho_+}\left(\tilde{e}(\rho)-\frac{1}{2}\rho-G(\rho)\right)G''(\rho)\:d\rho<\frac{1}{4}\int_{\rho_-}^{\rho_+}\big(\rho_0(G'(\rho))-\rho\big)\big(G'(\rho)^2\big)'\:d\rho.
\end{equation}

We further make the ansatz
 $\tilde{e}(\rho_\pm)=\frac{1}{2}\rho_\pm$, in other words that $e$ is continuious across $\partial\mathscr{U}$. 
Then partial integration shows that \eqref{eq:equivalent_subsol_admissibility} is equivalent to
\begin{equation}\label{eq:equivalent_subsol_adm_2}
I_{\xi_2,\eta_2}:=\int_{\rho_-}^{\rho_+}\left(\tilde{e}'(\rho)-\frac{3}{4}G'(\rho)\right)G'(\rho)\:d\rho>0.
\end{equation}
Observe that the condition $\tilde{e}(\rho_\pm)=\frac{1}{2}\rho_\pm$ requires $\xi_2(\rho_+)=1$, $\eta_2(\rho_-)=-1$.

Inspired by the known families of subsolutions for the Muskat problem \cite{Sz-Muskat} or the Kelvin-Helmholtz instability \cite{Sz-KH}, it is of interest to investigate the limit case when one is in the boundary of the convex hull, instead of its interior, as this corresponds to the limiting mixing zone growth rates of these families. In our case this means to choose $|\xi|=|\eta|=1$ throughout all of $[\rho_-,\rho_+]$, i.e. $\xi_2\equiv-\eta_2\equiv 1$.
%, \todo{as this would allow for more mixing, at least on an intuitive level. ??} 
%
Of course this will not lead to a strict subsolution inside the mixing zone, so we will later consider a slight perturbation in order to be into the interior of the convex hull.
%\todo{more blabla? cont of energy, minimization of Q, maximality of mixing? etc}

Denote by $Q_0$, $G_0$, $\tilde e_0$ the functions associated with the choice $\xi_2\equiv-\eta_2\equiv 1$, i.e.
\begin{gather*}
Q_0(\rho)=(\rho-\rho_-)\sqrt{\rho_-}+(\rho_+-\rho)\sqrt{\rho_+},\quad \tilde{e}_0(\rho)=\frac{1}{2}\frac{\rho_+\rho_-(\rho_+-\rho_-)^2}{Q_0(\rho)^2},\\
G_0(\rho)=\frac{(\rho_+-\rho)(\rho-\rho_-)(\sqrt{\rho_-}-\sqrt{\rho_+})}{Q_0(\rho)}.
\end{gather*}
Lengthy, but straightforward computations show that $G_0$ is uniformly strictly convex on $[\rho_-,\rho_+]$ and also that $I_{1,-1}=0$. This means that with this choice there holds equality in \eqref{eq:weakadm} for any $t>0$.

We now turn to the perturbation. Let $\varepsilon>0$ and consider
\begin{align}
\xi_2(\rho):=1+\varepsilon \bar\xi(\rho),\quad \eta_2(\rho):=-1+\varepsilon \bar\eta(\rho),
\end{align}
with 
$\bar{\xi}<0$, $\bar{\eta}>0$ on $(\rho_-,\rho_+)$
and
$\bar{\xi}(\rho_\pm)=\bar{\eta}(\rho_\pm)=0$. Again, the last condition allows the function $e$ defined via \eqref{eq:subsolen} to be continuous over the whole domain $\Omega\times (0,T)$.

We will look for asymptotic expansions of the associated $Q=Q_\varepsilon$, $G=G_\varepsilon$, $\tilde e=\tilde e_\varepsilon$ with respect to $\varepsilon>0$. There holds
\begin{align*}
Q_\varepsilon(\rho)&=Q_0(\rho)+\varepsilon\left((\rho-\rho_-)\sqrt{\rho_-}\bar{\xi}-(\rho_+-\rho)\sqrt{\rho_+}\bar\eta \right)
=:Q_0(\rho)+\varepsilon\bar{Q}(\rho),\\
\tilde e_\varepsilon(\rho)&=\frac{1}{2}\frac{\rho_+\rho_-(\rho_+-\rho_-)^2}{(Q_0(\rho)+\varepsilon\bar Q(\rho))^2}=\tilde e_0(\rho)-\varepsilon\rho_+\rho_-(\rho_+-\rho_-)^2\frac{\bar{Q}(\rho)}{Q_0(\rho)^3}+\mathcal{O}(\varepsilon^2)\\
&=:\tilde e_0(\rho)+\varepsilon\bar{e}(\rho)+\mathcal{O}(\varepsilon^2),\\
G_\varepsilon(\rho)&=G_0(\rho)+\varepsilon \frac{(\rho_+-\rho)(\rho-\rho_-)}{Q_0^2(\rho)}\sqrt{\rho_+\rho_-}(\rho_+-\rho_-)(\bar\xi+\bar\eta)+\mathcal{O}(\varepsilon^2)\\
&=:G_0(\rho)+\varepsilon\bar{G}(\rho)+\mathcal{O}(\varepsilon^2),
\end{align*}
while the expansion of $I_\varepsilon:=I_{1+\varepsilon\bar{\xi},-1+\varepsilon\bar{\eta}}$ reads
\begin{align*}
I_\varepsilon=\varepsilon\int_{\rho_-}^{\rho_+} \left(\tilde{e}_0'(\rho)\bar G'(\rho)+\bar{e}'(\rho) G_0'(\rho)-\frac{3}{2} G_0'(\rho)\bar G'(\rho)\right) \, d\rho +\mathcal{O}(\varepsilon^2)=:\varepsilon \bar{I}+\mathcal{O}(\varepsilon^2).
\end{align*}
Since $G_0$ is uniformly convex on $[\rho_-,\rho_+]$, the perturbed function $G_\varepsilon$ will also be uniformly convex for small enough $\varepsilon>0$. Moreover, in order to have admissibility for $\varepsilon>0$ small enough, it suffices to have $\bar I>0$.

By integration by parts we rewrite
\begin{align*}
\bar I&=-\int_{\rho_-}^{\rho_+}\left( \tilde{e}_0''(\rho)\bar G(\rho)+\bar{e}(\rho) G_0''(\rho)-\frac{3}{2} G_0''(\rho)\bar G(\rho)\right) \, d\rho\\&=\int_{\rho_-}^{\rho_+}\bar\xi(\rho) H_1(\rho) \, d\rho+\int_{\rho_-}^{\rho_+}\bar\eta(\rho) H_2(\rho) \, d\rho,
\end{align*}
where
\begin{align*}
H_1(\rho)&=\frac{(\rho_+-\rho)(\rho-\rho_-)}{Q_0^2(\rho)}\sqrt{\rho_+\rho_-}(\rho_+-\rho_-)\left(\frac{3}{2}G''_0(\rho)-\tilde e''_0(\rho)\right)\\&\hspace{40pt}+\frac{\rho_+\rho_-(\rho_+-\rho_-)^2}{Q_0^3(\rho)}\sqrt{\rho_-}(\rho-\rho_-)G''_0(\rho),\\
H_2(\rho)&=\frac{(\rho_+-\rho)(\rho-\rho_-)}{Q_0^2(\rho)}\sqrt{\rho_+\rho_-}(\rho_+-\rho_-)\left(\frac{3}{2}G''_0(\rho)-\tilde e''_0(\rho)\right)\\&\hspace{40pt}-\frac{\rho_+\rho_-(\rho_+-\rho_-)^2}{Q_0^3(\rho)}\sqrt{\rho_+}(\rho_+-\rho)G''_0(\rho).
\end{align*}
It then follows that in order to have $\bar{I}>0$, it suffices to find $\bar\rho\in(\rho_-,\rho_+)$ such that either $H_1(\bar\rho)<0$ or $H_2(\bar\rho)>0$. Indeed, if $H_1(\bar\rho)<0$, one may choose a smooth function $\rho\mapsto\bar\xi(\rho)$  such that it is strictly negative on $(\rho_-,\rho_+)$, vanishes at the endpoints and concentrates at $\bar\rho$ sufficiently such that $\int_{\rho_-}^{\rho_+}\bar\xi(\rho) H_1(\rho) \, d\rho>0$. Then, regardless of the sign of $H_2$, one may clearly choose a function $\rho\mapsto\bar\eta=\bar\eta(\rho)$ which is strictly positive on $(\rho_-,\rho_+)$, vanishes at the endpoints, and is small enough such that $\bar{I}>0$. The case $H_2(\bar\rho)>0$ can be treated similarly.
 
Finally, to conclude the proof of Theorem \ref{thm:rayleigh_taylor_subsolutions}, we will prove that in fact the first case  $H_1(\bar\rho)<0$ is not possible, while $H_2(\bar\rho)>0$ is possible if and only if $\sqrt{\frac{\rho_+}{\rho_-}}>\frac{4+2\sqrt{10}}{3}$.

Let us first prove the second statement. $H_2(\bar\rho)>0$ is equivalent to
$$Q_0(\bar\rho)(\bar\rho-\rho_-)\sqrt{\rho_-}\left(\frac{3}{2}G''_0(\bar\rho)-\tilde e''_0(\bar\rho)\right)-\rho_+\rho_-(\rho_+-\rho_-)G''_0(\bar\rho)>0.$$
Plugging in the expressions for $Q_0,G_0$ and $\tilde{e}_0$, one obtains that this is equivalent to
$$\bar\rho^2-(\rho_++2\rho_-)\bar\rho+\frac{2}{3}\rho_+^{3/2}\rho_-^{1/2}+\frac{5}{3}\rho_+\rho_-+\rho_-^2<0.$$
This is possible only if the discriminant with respect to $\bar\rho$ is strictly positive, which reads
\begin{align*}
(\rho_++2\rho_-)^2-4\left(\frac{2}{3}\rho_+^{3/2}\rho_-^{1/2}+\frac{5}{3}\rho_+\rho_-+\rho_-^2\right)>0
\end{align*}
or equivalently 
\[
r^2\left(r^2-\frac{8}{3}r-\frac{8}{3}\right)=r^2\left(r-\frac{4-2\sqrt{10}}{3}\right)\left(r-\frac{4+2\sqrt{10}}{3}\right)>0,
\]
where we have denoted $r:=\sqrt{\frac{\rho_+}{\rho_-}}>1$. The statement then follows by taking for instance $\bar{\rho}=\frac{\rho_++2\rho_-}{2}\in(\rho_-,\rho_+)$ due to $\sqrt{\frac{\rho_+}{\rho_-}}>\frac{4+2\sqrt{10}}{3}$.

The case $H_1(\bar\rho)<0$ being not possible is proven similarly, the same calculations yield the condition $\frac{1}{r^2}-\frac{8}{3r}-\frac{8}{3}>0$, which is not possible for $r>1$.

It remains to compute the precise growth rates of the mixing zone $\mathscr{U}$ given in \eqref{eq:entreqq}.
Observe that $\partial_\xi G(\rho_\pm)=\partial_\eta G(\rho_\pm)=0$, such that $\bar{\xi}(\rho_\pm)=\bar{\eta}(\rho_\pm)=0$ implies
%so if in addition one also imposes the condition $\bar{\xi}'(\rho_\pm)=\bar{\eta}'(\rho_\pm)=0$, then one gets
\begin{align*}
G'_\varepsilon(\rho_\pm)&=G'_0(\rho_\pm)=\frac{\sqrt{\rho_\pm}-\sqrt{\rho_\mp}}{\sqrt{\rho_\mp}}.
\end{align*}

This concludes the proof of Theorem \ref{thm:rayleigh_taylor_subsolutions}.
\end{proof}

We would like to point out that the condition $\sqrt{\frac{\rho_+}{\rho_-}}>\frac{4+2\sqrt{10}}{3}$ only enters in the admissibility of the subsolutions, more precisely it comes from our construction above for assuring $\bar I>0$. For an arbitrary ratio $\frac{\rho_+}{\rho_-}>1$ the fact that in the unperturbed case $I_{-1,1}=0$ shows that there exist infinitely many turbulently mixing solutions with the exact same growth rates $c_\pm(t)$ violating the weak admissibility by an arbitrary small amount of energy. 

Furthermore, we summarize the other ansatzes used during our construction and note that they can all be seen as not too restrictive for different reasons:
%Finally, we note that the only other ansatz-s we have made during the construction can be seen as not too restrictive.
\begin{itemize}
\item The independence of $x_1$ can be interpreted as an averaging in the $x_1$ direction. 
\item $v\equiv 0$ for the subsolution is in harmony with the vanishing initial velocity and the fact that the subsolution corresponds to an averaging of solutions.
\item $\xi$ and $\eta$ only depending on $\rho$ allow us to find the density $\rho$ as the unique entropy solution of a relatively simple conservation law, this generalizes the construction from \cite{Sz-Muskat,Sz-KH}, where the unique viscosity solution of a Burgers equation was considered. In fact a similar conservation law also appeared in the relaxation of the two-phase porous media flow with different mobilities by Otto \cite{Otto}. Our intuition behind choosing $\xi$ and $\eta$ to be perturbations of $\pm e_2$ has been explained during the proof. Nonetheless, it would be interesting to see if other choices of $\xi$ and $\eta$ also lead to admissible subsolutions. 
\item The continuity of $e$ across $\partial\mathscr U$ is not a huge jump from Definition \ref{def:rt_subsolutions}, which combined with $v\equiv 0$ already implied that
$e=\frac{1}{2}g^2t^2\rho_+$ in $\{x_2>0\}\cap\mathscr{D}\setminus\overline{\mathscr{U}}$, respectively
$e=\frac{1}{2}g^2t^2\rho_-$ in $\{x_2<0\}\cap\mathscr{D}\setminus\overline{\mathscr{U}}$,
and therefore the continuity of $e$ in each of the three pieces $\{x_2<0\}\cap\mathscr{D}\setminus\overline{\mathscr{U}}$, $\mathscr U$ and $\{x_2>0\}\cap\mathscr{D}\setminus\overline{\mathscr{U}}$.
\end{itemize}

Finally, we would like to state further properties than those of the growth rates of the unperturbed ``subsolution'' associated with $\xi_2\equiv 1$, $\eta_2\equiv -1$ in an explicit way. Inversion of the derivative $G_0':[\rho_-,\rho_+]\rightarrow \left[-\frac{\sqrt{\rho_+}-\sqrt{\rho_-}}{\sqrt{\rho_+}},\frac{\sqrt{\rho_+}-\sqrt{\rho_-}}{\sqrt{\rho_-}}\right]$ shows that the density profile, defined in \eqref{eq:entropy_solution}, inside the mixing zone is given by
\begin{align*}
\rho\left(x_2,t\right)=\rho_++\sqrt{\rho_+\rho_-}+\rho_--\frac{(\sqrt{\rho_+}+\sqrt{\rho_-})\sqrt[4]{\rho_+\rho_-}}{\sqrt{1+\frac{2x_2}{gt^2}}},
\end{align*}
the relaxed momentum $u_2(x_2,t)=gtG_0(\rho(x_2,t))$ and $e$ defined in \eqref{eq:subsolen} inside $\mathscr{U}$ by
\begin{gather*}
u_2(x_2,t)=gt(\sqrt{\rho_+}+\sqrt{\rho_-})\left(\frac{\sqrt[4]{\rho_+\rho_-}}{\sqrt{1+\frac{2x_2}{gt^2}}}+\sqrt[4]{\rho_+\rho_-}\sqrt{1+\frac{2x_2}{gt^2}}-\sqrt{\rho_+}-\sqrt{\rho_-}\right)\\
e(x_2,t)=\frac{1}{2}g^2t^2\sqrt{\rho_-\rho_+}\left(1+\frac{2x_2}{gt^2}\right),
\end{gather*}
from which an interested reader can obtain a formula of the associated energy density $E_{sub}$ defined in \eqref{eq:subsolenergy}.
Here we would only like to state the conversion rate of total potential energy into total kinetic energy. Recall that the unperturbed ``subsolution'' satisfies \eqref{eq:weakadm} with equality. Hence the total kinetic energy at time $t\geq 0$ can be expressed as the difference in total potential energy, which is
\begin{align*}
\int_{\Omega}(\rho_0(x)-\rho(x,t))gx_2\:dx&=\frac{g^3t^4}{8}\int_{\rho_-}^{\rho_+}(\rho_0(G_0'(\rho))-\rho)\big(G'_0(\rho)^2\big)'\:d\rho\\
&=\frac{g^3t^4}{8}\int_{\rho_-}^{\rho_+}G'_0(\rho)^2\:d\rho\\
&=\frac{g^3t^4(\sqrt{\rho_+}+\sqrt{\rho_-})(\sqrt{\rho_+}-\sqrt{\rho_-})^3}{24\sqrt{\rho_+\rho_-}}.
\end{align*}
We conclude the paper by presenting a plot of the above density (blue) and momentum (red) profiles for the choice $\rho_-=1/4$, $\rho_+=4$, $g=1$ at fixed time $t=\left(\frac{2}{g(\sqrt{\rho_+}-\sqrt{\rho_-})}\right)^{\frac{1}{2}}$. At this specific time the mixing zone extends from $x_2=-\rho_+^{-1/2}=-1/2$ to $x_2=\rho_-^{-1/2}=2$.

\begin{figure}[ht]
\centering
\includegraphics[width=0.62\textwidth]{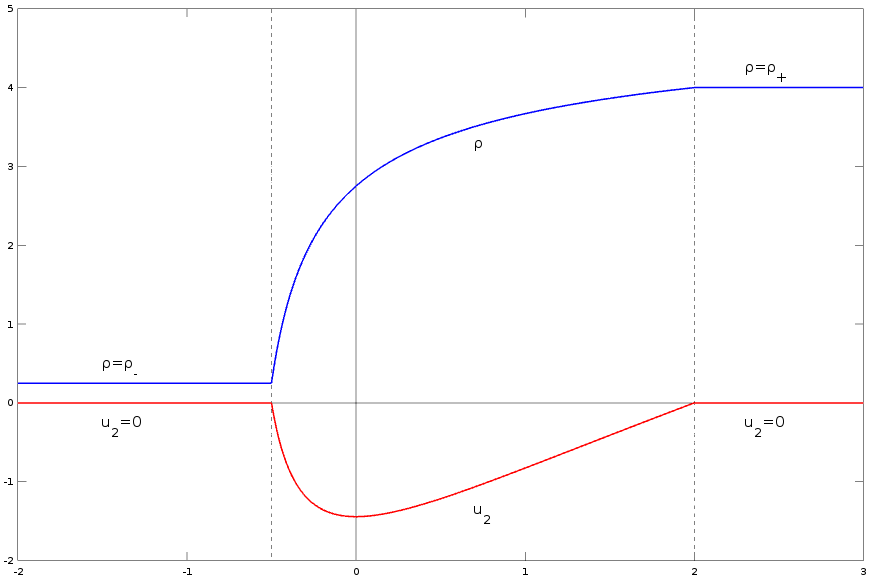}
\end{figure}

\vspace{20pt}
\noindent Mathematisches Institut,  Universit\"at Leipzig,  Augustusplatz 10, D-04109 Leipzig \\
\texttt{bjoern.gebhard@math.uni-leipzig.de}\\
\texttt{jozsef.kolumban@math.uni-leipzig.de}\\
\texttt{laszlo.szekelyhidi@math.uni-leipzig.de}

\end{document}